\theoremstyle{plain}
    \newtheorem{thm}{Theorem}[section]
    \newtheorem{ppn}[thm]{Proposition}
    \newtheorem{lem}[thm]{Lemma}
    \newtheorem{cor}[thm]{Corollary}
\theoremstyle{definition}
\theoremstyle{remark}
    \newtheorem{rmk}[thm]{Remark}
    \newtheorem*{rmk*}{Remark}   
    \newtheorem{epl}[thm]{Example}
\numberwithin{equation}{section}
\def\Ker{\operatorname{Ker}}\def\Hom{\operatorname{Hom}}\def\Spec{\operatorname{Spec}}
\def\C{\mathbb{C}}\def\Q{\mathbb{Q}}\def\Z{\mathbb{Z}}\def\F{\mathbb{F}}
\def\ol#1{\overline{#1}}\def\ul#1{\underline{#1}}\def\wt#1{\widetilde{#1}}\def\wh#1{\widehat{#1}}
\def\os#1#2{\overset{#1}{#2}}
\def\ot{\otimes}
\def\a{\alpha}\def\b{\beta}\def\d{\delta}
\def\io{\iota}\def\k{\kappa}\def\z{\zeta}\def\x{\xi}\def\y{\eta}
\def\G{\Gamma}
\def\L{\Lambda}\def\s{\sigma}\def\t{\tau}
\def\Im{\operatorname{Im}}
\def\prim{\mathrm{prim}}
\def\rank{\operatorname{rank}}
\newcommand{\CH}{\operatorname{CH}}
\def\D{\Delta}
\def\r{\rho}
\newcommand\sC{\mathscr{C}}
\newcommand\sO{\mathscr{O}}
\newcommand\ba{\mathbf{a}}
\newcommand{\End}{\mathrm{End}}
\newcommand{\Gal}{\mathrm{Gal}}
\newcommand{\A}{{\mathbb{A}}}
\newcommand{\lra}{\longrightarrow}
\newcommand{\sFr}{\mathfrak{Fr}}
\newcommand{\bchi}{{\boldsymbol\chi}}
\newcommand{\bn}{{\boldsymbol\nu}}
\renewcommand{\P}{\mathbb{P}}
\newcommand\n{\nu}\newcommand\m{\mu}
\newcommand\vp{\varphi}
\newcommand\ck{\wh{\k^*}}
\newcommand\0{\circ}
\newcommand{\Fr}{\mathrm{Fr}}
\newcommand{\pr}{\mathrm{pr}}
\newcommand{\Tr}{\mathrm{Tr}}
\newcommand{\Cor}{\mathbf{Cor}}
\newcommand{\Sm}{\mathbf{Sm}}
\newcommand{\SmProj}{\mathbf{SmProj}}
\newcommand{\DM}{\mathbf{DM}_{\mathrm{gm}}}
\newcommand{\Chow}{\mathbf{Chow}}
\newcommand{\id}{\mathrm{id}}
\newcommand{\Pic}{\mathbf{Pic}}
\begin{document}

\title{Motivic Gauss and Jacobi sums}
\author{Noriyuki Otsubo and Takao Yamazaki}
\address{Department of Mathematics and Informatics, Chiba University, Inage, Chiba, 263-8522 Japan}
\email{otsubo@math.s.chiba-u.ac.jp}
\address{Department of Mathematics,
Chuo University, 1-13-27 Kasuga,
Bunkyo-ku, Tokyo 112-8551, Japan}
\email{ytakao@math.chuo-u.ac.jp}
\begin{abstract}
We study the Gauss and Jacobi sums from a viewpoint of motives.
We exhibit isomorphisms between Chow motives
arising from the Artin-Schreier curve and the Fermat varieties over a finite field,
that can be regarded as (and yield a new proof of)
classically known relations among Gauss and Jacobi sums
such as Davenport-Hasse's multiplication formula.
As a key step,
we define motivic analogues of the Gauss and Jacobi sums as 
algebraic correspondences,
and show that they represent the Frobenius endomorphisms of such motives.
This generalizes Coleman's result for curves.
These results are applied to 
investigate the group of invertible Chow motives with coefficients in a cyclotomic field.
\end{abstract}

\date{\today.}
\subjclass[2010]{14C15 (Primary) 11L05, 19E15 (Secondary)}
\keywords{Gauss sums, Jacobi sums, Davenport-Hasse relation, Chow motives, Weil numbers}

\thanks{
The first author is supported by JSPS KAKENHI Grant (JP22H00096). 
The second author is supported by JSPS KAKENHI Grant (JP21K03153). 
}

\maketitle


\section{Introduction}

Let $\k$ be a finite field of cardinality $q$ and of characteristic $p$,
and $d$ a positive divisor of $q-1$.
Take a non-trivial additive character $\psi \colon \k \to \C^*$,
and multiplicative characters $\chi, \chi_1, \dots, \chi_n \colon \mu_d \to \C^*$,
where $\mu_d := \{ m \in \k^* \mid m^d=1 \}$.
We consider the Gauss sum $g(\psi,\chi) \in \Q(\zeta_{pd})$ and
the Jacobi sum $j(\chi_1,\dots, \chi_n) \in\Q(\zeta_d)$
(see \eqref{eq:def-Gsum}, \eqref{eq:def-Jsum} for the definitions),
where $\zeta_k := e^{2 \pi i/k} \in \C$.
In this introduction,
we discuss the following relations among $g(\psi, \chi)$ and
$j(\chi_1, \dots, \chi_n)$:

\begin{itemize}
\item 
Assume that none of $\chi_1, \dots, \chi_n, \prod_{i=1}^n \chi_i$ is trivial.
Then we have (cf. \eqref{e1})
\begin{align}
\label{e0-1}
&\prod_{i=1}^ng(\psi,\chi_i)
=
g(\psi, \chi_1\cdots \chi_n)j(\chi_1,\dots, \chi_n).
\end{align}
\item 
Assume $n \mid d$ and
let $\alpha \colon \m_d \to \C^*$ be a character 
such that $\alpha^n \not= 1$.
Then we have the Davenport-Hasse multiplication formula
(cf. \eqref{mult-j})
\begin{equation}\label{e0-3}
\a^n(n) j(\underbrace{\a,\dots, \a}_\text{$n$ times})=\prod_{\chi^n=1,\chi\ne 1} j(\a,\chi),
\end{equation}
where $\chi$ ranges over all non-trivial characters of $\m_d$
such that $\chi^n=1$.
\end{itemize}
(See \eqref{gauss-reflection}, \eqref{gauss-bar}, \eqref{e1-reflextion}, \eqref{eq:j-induction}, 
\eqref{g-dh}, \eqref{j-dh}, \eqref{eq:mult-g}, \eqref{mult-j}
for other relations considered in the body of the text.)
The aim of the present note is to upgrade these relations to \emph{motives}.
This is archived in two steps.
The first step is to construct isomorphisms between suitable motives,
and the second is to relate the Frobenius endomorphisms
with the motivic Gauss and Jacobi sums.

To state our results, we introduce more notations.
Let $\Chow(\k, \Lambda)$ be the category of Chow motives over $\k$ 
with coefficients in a field $\Lambda$ of characteristic zero.
Let $A_d$ be the (smooth projective) Artin-Schreier curve defined by $x^{q}-x=y^d$.
We construct an object $h(A_d)^{(\psi, \chi)}$ 
of $\Chow(\k, \Q(\zeta_{pd}))$
as a direct factor of the motive $h(A_d)$ of $A_d$
cut out by the action of $\k \times \mu_d$.
Similarly, for each $c \in \k^*$
we construct an object
$h(F_d^{(n)}\langle c \rangle)^{(\chi_1, \dots, \chi_n)}$ 
of $\Chow(\k, \Q(\zeta_{d}))$
as a direct factor of the motive $h(F_d^{(n)}\langle c \rangle)$
of 
the Fermat variety $F_d^{(n)}\langle c \rangle \subset \P^n$ defined by 
$u_1^d+ \cdots + u_n^d=cu_0^d$ cut out by the action of $\mu_d^n$.
(See section \ref{sect:AS-F} for details.)
We drop $\langle c \rangle$ from the notation when $c=1$.
Our first main result is the following.

\begin{thm}\label{thm1}\ 
\begin{enumerate}
\item 
Let $\psi$ be a character of $\k$ and $\chi_1, \dots, \chi_n$ characters of $\m_d$.
If none of $\psi, \chi_1,\dots, \chi_n$, $\prod_{i=1}^n \chi_n$ is trivial, 
then there exists an isomorphism between invertible objects 
of $\Chow(\k,\Q(\zeta_{pd}))$
\begin{align*}
&\bigotimes_{i=1}^n h(A_d)^{(\psi, \chi_i)} 
\simeq 
h(A_d)^{(\psi, \prod_{i=1}^n \chi_i)} \ot h(F_d^{(n)})^{(\chi_1,\dots, \chi_n)}.
\end{align*}
\item
Suppose that $n$ divides $d$ and 
let $\alpha$ be a character of $\m_d$ such that $\alpha^n \not= 1$.
Then there exists an isomorphism 
between invertible objects of $\Chow(\k,\Q(\zeta_{d}))$
\begin{align*}
h(F_{d}^{(n)}\langle n\rangle)^{(\a,\dots,\a)} \simeq 
\bigotimes_{\chi^n=1,\chi\ne 1} h(F_{d}^{(2)})^{(\a,\chi)}. 
\end{align*}
\end{enumerate}
\end{thm}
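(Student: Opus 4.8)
The plan is to construct the two isomorphisms by exhibiting explicit algebraic correspondences and then checking they are mutually inverse idempotent-compatible maps. The unifying idea is that both sides are one-dimensional (invertible) motives, so an isomorphism is nothing but a correspondence whose composite with its "transpose" is the identity on each factor; by the theory developed earlier (the motivic Gauss and Jacobi sums as correspondences representing Frobenius) it suffices to produce a correspondence of the correct rank and then verify invertibility after passing to a Weil cohomology, where everything becomes a computation with Gauss and Jacobi sums via \eqref{e0-1} and \eqref{e0-3}. So the strategy is: (a) reduce "isomorphism of invertible motives" to "an algebraic correspondence inducing an isomorphism on $\ell$-adic realizations", using that Chow motives which are invertible and have isomorphic realizations are isomorphic once a correspondence realizing the iso is given; (b) write down the geometric correspondence; (c) compute its realization and match it with the classical identity.

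For part (i), I would start from the standard map of varieties underlying the classical Jacobi-sum identity: the Artin--Schreier curve $A_d$ maps to $\mathbb{G}_a$ by $x$ and to $\mu_d$-torsors by $y$, and the $n$-fold fibre product $A_d \times_{\mathbb{G}_a} \cdots \times_{\mathbb{G}_a} A_d$ (using the additive structure, i.e. the locus $\sum x_i = x$) carries both a map recording $(y_1,\dots,y_n)$ — which after projectivizing lands in the Fermat variety $F_d^{(n)}$ via $u_i = y_i$ — and the diagonal-type projection to a single $A_d$ recording $\sum x_i$. Pulling back the idempotents $(\psi,\chi_i)$ on the factors and $(\chi_1,\dots,\chi_n)$ on $F_d^{(n)}$ and $(\psi,\prod\chi_i)$ on the target $A_d$, the graph of this correspondence, suitably transposed, gives the desired morphism $\bigotimes_i h(A_d)^{(\psi,\chi_i)} \to h(A_d)^{(\psi,\prod\chi_i)}\otimes h(F_d^{(n)})^{(\chi_1,\dots,\chi_n)}$. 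One then checks on $\ell$-adic cohomology that this is an isomorphism: each side is one-dimensional, the induced scalar is (by the Frobenius-representation theorem proved earlier) the ratio $\prod g(\psi,\chi_i) \big/ \big(g(\psi,\prod\chi_i) j(\chi_1,\dots,\chi_n)\big)$, which is $1$ by \eqref{e0-1}; since a correspondence between invertible Chow motives inducing an isomorphism on realizations is itself an isomorphism, we are done.

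For part (ii), the mechanism is the same but the geometry implementing Davenport--Hasse is the "multiplication by $n$" map on the Fermat direction: the variety $F_d^{(n)}\langle n\rangle$ defined by $\sum u_i^d = n u_0^d$ admits a correspondence to $\prod_{\chi} F_d^{(2)}$ built from the map that sends a point to the collection of its images under the $\mu_d^n$-action combined with the norm/averaging over the subgroup cutting out the characters $\chi$ with $\chi^n = 1$. Concretely I would use the tower relating $F_d^{(n)}$ to $F_d^{(2)}$'s by successively splitting off diagonal characters, exactly as one proves $j(\alpha,\dots,\alpha)$ factors through the $j(\alpha,\chi)$ classically; transposing the resulting graph and twisting by the idempotents gives the morphism, and its realization is the scalar $\alpha^n(n)\, j(\alpha,\dots,\alpha)\big/\prod_{\chi^n=1,\chi\ne1} j(\alpha,\chi)$, equal to $1$ by \eqref{e0-3}.

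The main obstacle I expect is step (b)–(c) done \emph{integrally at the level of Chow motives}, not just on realizations: one must be sure the chosen correspondence actually lands in the right direct factor (i.e. commutes with the projectors cutting out the $(\psi,\chi)$- and $(\chi_1,\dots,\chi_n)$-isotypic pieces) and that "isomorphism on one Weil cohomology $\Rightarrow$ isomorphism of Chow motives" is legitimately available here — this needs the invertibility of the motives (so that $\mathrm{Hom}$ groups are one-dimensional $\Lambda$-vector spaces and a nonzero map is automatically invertible) together with the earlier identification of these $\mathrm{Hom}$'s. Granting the framework of the paper (motivic Gauss/Jacobi sums as correspondences, their realization computed as the classical sums, and the invertibility of all motives in sight), the argument reduces to matching two scalars, and the classical identities \eqref{e0-1} and \eqref{e0-3} finish it.
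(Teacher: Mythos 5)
The geometric input you propose for (i) --- the fibre product of $n$ copies of $A_d$ over the additive coordinate, mapped to $A_d\times F_d^{(n)}$ via $x=\sum x_i$ and $u_i=y_i$ --- is essentially the correspondence $\G\subset (A^\0)^2\times A^\0F$ that the paper uses for $n=2$ (the general case then follows by induction via the Katsura--Shioda inductive structure of Fermat motives and Proposition \ref{as-dual}). But your logical framework has a genuine gap: step (a), reducing to an isomorphism of $\ell$-adic realizations, is not available unconditionally. The claim that a nonzero map between invertible Chow motives is automatically invertible is false: for invertible $L,L'$ one has $\Hom(L,L')\simeq\Hom(\1,L^\vee\otimes L')$, which is a Chow group of a variety and need not be one-dimensional over $\Lambda$; and even a class that is nonzero on cohomology is an isomorphism only if one can produce an explicit $v\colon L'\to L$ \emph{at the Chow level} with $v\circ u\ne 0$ in $\End(L)\simeq\Lambda$. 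The principle ``isomorphism on a Weil cohomology $\Rightarrow$ isomorphism of Chow motives'' is exactly the statement the paper treats as conditional on the Beilinson and Tate conjectures (Section \ref{sect:weil}); the entire point of Theorem \ref{thm1} is that it holds unconditionally, so your argument is circular in spirit.

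The paper never passes to realizations. For (i) with $n=2$ it shows that the two open pieces $\G_1,\G_2$ of the correspondence are finite Galois covers of $(A^\0)^2\setminus Z$ and of $A^\0F^\0$, and then assembles an honest chain of isomorphisms of motives in $\DM$ using the localization triangle (deleting loci whose $\chi$-isotypic motive vanishes), finite Galois descent (Proposition \ref{localization}), and the full faithfulness of $\Chow\to\DM$; invertibility is obtained separately by an intersection-number computation at the Chow level (Lemmas \ref{intersection} and \ref{intersection-surf}), not from cohomology. For (ii) the paper uses Terasoma's varieties (the curve $x^d+y^n=1$, its $(n-1)$-fold power, and their covers of the hyperplane $s_1+\cdots+s_n=n$) together with a symmetric-group idempotent argument showing a composite $L\to N\to L$ equals $1/(n-1)!$. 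To salvage your approach you would need, at minimum, to exhibit both your correspondence and an explicit candidate inverse and evaluate their composite as a scalar in the endomorphism ring of an invertible motive (an intersection number); matching the two sides after realization does not suffice.
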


In fact, the isomorphism in (ii) holds over arbitrary base field $\k$
as long as $\k$ contains a primitive $d$th root of unity (see subsection \ref{ss7.2}). 
When $\k=\C$, it has an implication on the gamma function (see Remark \ref{rem-gamma-mult}). 

To state the second main result,
we introduce an element of the group ring $\Z[\m_d^n]$ 
\[j_d^{(n)}\langle c \rangle = (-1)^{n-1} \sum_{m_1, \dots, m_n \in \k^*, \sum m_i =c} (m_1^\frac{q-1}{d},\dots, m_n^\frac{q-1}{d}),
\] 
which we call the \emph{twisted Jacobi sum element}.
It follows from the definition that
$j_d^{(n)}\langle c\rangle$ acts on 
$h(F_d^{(n)}\langle c \rangle)^{(\chi_1, \dots, \chi_n)}$
as the multiplication by $\chi_1 \cdots \chi_n(c^\frac{q-1}{d}) j(\chi_1, \dots, \chi_n)$.
The following theorem generalizes Coleman's result \cite[Theorem A]{coleman}
for $n=2$ and $c=1$. 

\begin{thm}\label{thm2}
Let $\chi_1, \dots, \chi_n$ be characters of $\m_d$
such that none of $\chi_1, \dots, \chi_n, \prod_{i=1}^n \chi_i$ is trivial.
Then 
the endomorphism of $h(F_d^{(n)}\langle c \rangle)^{(\chi_1, \dots, \chi_n)}$ 
induced by $j_d^{(n)}\langle c\rangle$
agrees with the Frobenius endomorphism.
\end{thm}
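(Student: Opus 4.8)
The plan is to compare two endomorphisms of the Chow motive $h(F_d^{(n)}\langle c\rangle)^{(\chi_1,\dots,\chi_n)}$ by testing them against a suitable Weil cohomology. Since the category of Chow motives with $\Q(\zeta_d)$-coefficients admits a faithful realization functor — say $\ell$-adic \'etale cohomology for some prime $\ell\neq p$ (after extending scalars, $\Q(\zeta_d)\hookrightarrow\Ql$, which is harmless for checking equality of correspondences on a motive whose endomorphism algebra we expect to be commutative) — it suffices to show that $j_d^{(n)}\langle c\rangle$ and the geometric Frobenius act by the same operator on the corresponding direct summand $H^{n-1}(F_d^{(n)}\langle c\rangle)^{(\chi_1,\dots,\chi_n)}_{\et}$. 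First I would record, as already noted in the excerpt, that $j_d^{(n)}\langle c\rangle$ acts on this summand by the scalar $\chi_1\cdots\chi_n(c^{(q-1)/d})\,j(\chi_1,\dots,\chi_n)$; the genuine content is therefore that the Frobenius acts by that same scalar.

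The key step is the classical computation of the zeta function of the (twisted) Fermat variety. I would diagonalize $H^{n-1}_{\et}(F_d^{(n)}\langle c\rangle_{\bar\k},\Ql)$ under the $\mu_d^n$-action into character eigenspaces $H^{n-1}(\dots)^{(\chi_1,\dots,\chi_n)}$, each of which is at most one-dimensional precisely when none of $\chi_1,\dots,\chi_n,\prod\chi_i$ is trivial (this is the reason for the hypothesis). Then the eigenvalue of geometric Frobenius on the $(\chi_1,\dots,\chi_n)$-eigenspace is computed, via the Grothendieck–Lefschetz trace formula applied over $\k$ and its extensions (or directly via character sums / Gauss-sum manipulations à la Weil), to be exactly the twisted Jacobi sum $\chi_1\cdots\chi_n(c^{(q-1)/d})\,j(\chi_1,\dots,\chi_n)$ — the twist by $c$ entering through the change of variables $u_i\mapsto$ scaled coordinates that relates $F_d^{(n)}\langle c\rangle$ to $F_d^{(n)}$. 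Matching this eigenvalue against the scalar by which $j_d^{(n)}\langle c\rangle$ acts finishes the cohomological comparison.

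To upgrade from cohomology to Chow motives I would invoke the fact that on the motive $h(F_d^{(n)}\langle c\rangle)^{(\chi_1,\dots,\chi_n)}$ — an invertible object, as asserted in Theorem \ref{thm1} and established in section \ref{sect:AS-F} — the natural map $\End_{\Chow}(h(F_d^{(n)}\langle c\rangle)^{(\chi_1,\dots,\chi_n)})\to\End(H^{n-1}_{\et})$ is injective. For invertible (Tate-twist-of-rank-one) motives the endomorphism ring of the realization is just $\Ql$, and the corresponding statement for Chow endomorphisms follows because algebraically (indeed numerically) trivial cycles map to zero in cohomology and a rank-one motive has no nontrivial such part; more precisely, both $j_d^{(n)}\langle c\rangle$ and $\Fr$ lie in the image of $\Q(\zeta_d)\cdot\id$ (Frobenius does, because it is central and the realization has commutative — one-dimensional — endomorphism algebra), so their difference is a $\Q(\zeta_d)$-multiple of the identity that dies in cohomology, hence is zero.

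The main obstacle I anticipate is not the eigenvalue computation — that is classical Weil theory — but the passage from the cohomological identity to an identity of algebraic correspondences: one must be sure that $\Fr$ on the Chow motive really is scalar, i.e. that $h(F_d^{(n)}\langle c\rangle)^{(\chi_1,\dots,\chi_n)}$ is genuinely invertible with $\End=\Q(\zeta_d)$ in $\Chow$, not merely in a numerical or homological category. Here I would lean on the explicit construction of this summand in section \ref{sect:AS-F} (it is cut out by an idempotent built from the $\mu_d^n$-action, and the Lefschetz/Künneth formula together with the vanishing of the relevant cohomology in other degrees forces the motive to be a twist of $\1$), which makes the identification $\End_{\Chow}=\Q(\zeta_d)$ available, and then the theorem follows from faithfulness of the realization on this particular motive.
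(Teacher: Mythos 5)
Your proposal is correct in outline, but it takes a genuinely different route from the paper. The paper's proof is deliberately \emph{cohomology-free}: for $n=2$ it exhibits an explicit rational function on $F_d^{(2)}\times F_d^{(2)}$ whose divisor realizes the rational equivalence between the Frobenius graph and the Jacobi-sum correspondence up to degenerate terms killed by $e_\prim$ (Proposition \ref{coleman-fr} and Corollary \ref{g-j-fr}), and then runs an induction on $n$ via the Katsura--Shioda inductive structure (Proposition \ref{f-mot} (ii), (iii)) combined with the group-ring identities of Lemma \ref{l-j-ind}; the twist $\langle c\rangle$ is handled at the end by Proposition \ref{twist}. You instead compute the Frobenius eigenvalue on the $\bchi$-eigenspace of $\ell$-adic cohomology by classical Weil theory and descend to $\Chow$ using that the motive is invertible, hence has endomorphism ring $\L$, so that both endomorphisms are scalars detected by any nonzero realization. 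That descent is sound, and you correctly identify its crux: it only works because $h(F_d^{(n)}\langle c\rangle)^{\bchi}$ is invertible \emph{in the Chow category} (Proposition \ref{f-mot}, proved independently of Theorem \ref{thm2}), not merely homologically --- for a general motive two correspondences agreeing in cohomology need not be rationally equivalent. What your route buys is brevity, at the cost of importing the classical trace-formula computation; what the paper's route buys is exactly what Remark \ref{r-inv} advertises, a purely motivic argument that \emph{reproves} the classical eigenvalue computation rather than assuming it. Two small cautions if you write this up: the step ``Frobenius is scalar because it is central'' is unnecessary (every endomorphism of an invertible object is scalar), and you should fix covariance conventions carefully, since the paper's $\Fr_X=[\Gamma_{\Fr}]$ acts covariantly and a mismatch with the contravariant Frobenius of the trace formula would replace $j(\bchi)$ by $j(\ol\bchi)=q^{n-1}/j(\bchi)$ on a given eigenspace.
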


Coleman also proved that the \emph{Gauss sum element}
\begin{equation}\label{eq:coleman}
g_d =-\sum_{m \in \k^*} (m,m^\frac{q-1}{d}) \in \Z[\k \times \mu_{d}] 
\end{equation}
induces the Frobenius endomorphism on $h(A_d)^{(\psi, \chi)}$ 
if $\psi$ and $\chi$ are non-trivial.
Since the Frobenius endomorphism commutes with any morphisms
(see \eqref{eq:Fr-comm-any-f} below),
\eqref{e0-1} and \eqref{e0-3}
can be deduced from Coleman's result
and Theorems \ref{thm1}, \ref{thm2}.
See Remark \ref{rem:deduce-formulas} for details.

We conclude this introduction by a discussion on 
the relations among Weil numbers and motives.
Recall that $\alpha \in \C$ is called a \emph{$q$-Weil number} of weight $w \in \Z$
if there exists $m \in \Z$ such that
$q^m \alpha$ is an algebraic integer
and 
$|\sigma(\alpha)|=q^{w/2}$ for all $\sigma \colon \Q(\alpha) \to \C$.
Let $W_q(\L)$ be the subgroup of $\L^*$
consisting of all $q$-Weil numbers (of arbitrary weight) belonging to 
a given subfield $\Lambda$ of $\C$.
It is conjectured by Beilinson \cite[1.0]{B}
that the rational equivalence and numerical equivalence
should agree over a finite field $\k$ (with coefficients in $\L$).
If we assume this as well as the Tate conjecture, 
it follows from \cite[Proposition 2.21]{Milne} that
all simple objects of $\Chow(\k, \ol{\Q})$ should be invertible,
and their isomorphism classes would form a group 
(with respect to the tensor product) isomorphic to $W_q(\ol{\Q})$.
In particular,
the group $\Pic(\Chow(\k, \L))$ of all isomorphism classes of invertible objects  of $\Chow(\k, \L)$
should be isomorphic to a subgroup of $W_q(\L)$
by Lemma \ref{lem:scalar-ext} below.
Therefore, the multiplicative relations among $q$-Weil numbers 
(such as \eqref{e0-1} and \eqref{e0-3})
\emph{should} come from relations among motives,
as demonstrated by Theorem \ref{thm1}.

Using our motivic relations,
we shall deduce the following two results on $\Pic(\Chow(\k, \Q(\zeta_d)))$ where $\k$ is the residue field of $\Q(\z_d)$ at a prime $v\nmid d$, 
both conditional to the conjectures of Beilinson and Tate
(see Corollary \ref{cor:conditional} and the discussion after Proposition \ref{prop:basis}):
\begin{itemize}
\item 
$\Pic(\Chow(\k, \Q(\zeta_d))$ should be 
generated by the Fermat motives $h(F_d^{(2)})^{(\chi_1, \chi_2)}$,
up to powers and Artin motives.
\item 
All the relations among $h(F_d^{(2)})^{(\chi_1, \chi_2)}$ in $\Pic(\Chow(\k, \Q(\zeta_d)))$ 
should be implied by Theorem \ref{thm1} (ii)
and the reflection relation \eqref{j-reflextion-mot},
up to powers and Artin motives.
\end{itemize}
The key input here is a result of Iwasawa-Sinnott \cite{sinnott} 
on Stickelberger's ideal.

The paper is organized as follows.
After a brief recollection on the Gauss and Jacobi sums,
we define their motivic variants in section 2.
We prepare a few basic facts on the Chow and Voevodsky motives in section 3.
We then extensively study the motives of the Artin-Schreier curves and the Fermat varieties in section 4,
where a crucial ingredient is 
the inductive structure of Fermat varieties due to Katsura-Shioda \cite{shioda-katsura}.
We complete the proof of Theorem \ref{thm1} (i), (ii) and Theorem \ref{thm2}
in sections 5, 7 and 6, respectively.
The last section 8 is devoted to a discussion on Weil numbers
and $\Pic(\Chow(\k, \Q(\zeta_d)))$.

\subsection*{Acknowledgements}
The authors thank sincerely the referee for his/her careful reading and helpful comments.

\section{Gauss and Jacobi sums}

Let $\k$ be a finite field of characteristic $p$ with $q$ elements,
and let $d$ be a positive divisor of $q-1$.
We write $\wh G:=\Hom(G,\C^\times)$ when $G$ is a finite group,
and $\ol{\chi}:=\chi^{-1}$ for $\chi \in \wh G$.
For $\psi \in \wh\k \setminus \{ 1\}$ and  $\chi \in \wh\m_d$, 
the {\em Gauss sum} is defined by 
\begin{equation}\label{eq:def-Gsum}
g(\psi,\chi)=-\sum_{m\in\k^*} \psi(m)\chi(m^{\frac{q-1}{d}}) \ \in \Q(\zeta_{pd}).
\end{equation}
Note that $g(\psi, 1)=1$. 
We have for any $\chi \not= 1$
\begin{equation}\label{gauss-reflection}
g(\psi,\chi)g(\ol\psi,\ol\chi)=q.
\end{equation} 
In particular, $g(\psi,\chi)$ is a $q$-Weil number of weight one if $\chi \ne 1$. 
We have also
\begin{equation}\label{gauss-bar}
g(\ol\psi,\chi)=\chi((-1)^{\frac{q-1}{d}})g(\psi,\chi). 
\end{equation}
If $d' \mid d$ and if $\chi' \in \wh\m_{d'}$ is such that $\chi(m)=\chi'(m^{d/d'})$
for all $m \in \m_d$,
then we have 
$g(\psi, \chi) = g(\psi, \chi')$. 

For $\chi_1,\dots, \chi_n \in \wh\m_d$, the {\em Jacobi sum} is defined by 
\begin{equation}\label{eq:def-Jsum}
j(\chi_1,\dots, \chi_n)=(-1)^{n-1} \sum_{m_i\in\k^*, \sum_{i=1}^nm_i=1} \chi_1(m_1^{\frac{q-1}{d}})\cdots
\chi_n(m_n^{\frac{q-1}{d}}) \ \in\Q(\zeta_d).
\end{equation}
If none of $\chi_1, \dots, \chi_n, \prod_{i=1}^n \chi_i$ is trivial,
we have (cf. \cite[Proposition 2.2]{otsubo}) 
\begin{align}
\label{e1}
&j(\chi_1,\dots, \chi_n) =\frac{g(\psi,\chi_1)\cdots g(\psi,\chi_n)}{g(\psi, \chi_1\cdots \chi_n)},
\\
\label{e1-reflextion}
&j(\chi_1,\dots, \chi_n)j(\ol{\chi_1},\dots, \ol{\chi_n}) = q^{n-1}.
\end{align}
(In particular, the right member of \eqref{e1} is independent of $\psi$).
If $d' \mid d$ and if $\chi_i' \in \wh\m_{d'}$ is such that $\chi_i(m)=\chi_i'(m^{d/d'})$
for all $m \in \m_d$ and $i=1, \dots, n$,
then we have $j(\chi_1,\dots, \chi_n) = j(\chi_1',\dots, \chi_n')$.

\begin{rmk}\label{gamma-beta}
The relations \eqref{gauss-reflection} and \eqref{e1} are finite field analogues of the functional equations for the gamma and  beta functions: 
\[\G(s)\G(1-s)=\frac{\pi}{\sin \pi s}, \quad B(s_1,\dots, s_n)=\frac{\G(s_1)\cdots \G(s_n)}{\G(s_1+\cdots+s_n)}.\]
\end{rmk}

Define the {\em Gauss sum element} in the group ring $\Z[\k\times \m_d]$ by 
\[g_d =-\sum_{m \in \k^*} (m,m^\frac{q-1}{d}).\]
Define for $c \in \k$ the {\em twisted Jacobi sum element} in the group ring $\Z[\m_d^n]$ by 
\begin{equation}\label{eq:tw-js}
j_d^{(n)}\langle c \rangle =(-1)^{n-1} \sum_{m_1,\dots, m_n \in \k^*, \sum_{i=1}^n m_i=c}
(m_1^\frac{q-1}{d}, \dots, m_n^\frac{q-1}{d}).
\end{equation}
We write $j_d^{(n)}=j_d^{(n)}\langle 1 \rangle$. 
Note that if $c \not= 0$
\[j_d^{(n)}\langle c \rangle = (c^\frac{q-1}{d},\dots, c^\frac{q-1}{d}) \cdot j_d^{(n)}.\]
If $d' \mid d$, then the map $\Z[\k\times \m_d] \to \Z[\k\times \m_{d'}]; (a,m)\mapsto (a,m^{d/d'})$ sends $g_d$ to $g_{d'}$, and the map $\Z[\m_d^n] \to \Z[\m_{d'}^n]; (m_i) \mapsto (m_i^{d/d'})$ sends $j_d^{(n)}\langle c \rangle$ to $j_{d'}^{(n)}\langle c \rangle$. 

If $G$ is a finite group and $\chi \in \wh G$, 
we write
\begin{equation}\label{eq:def-e-chi}
e^\chi =e_G^\chi=\frac{1}{|G|} \sum_{g \in G} \ol\chi(g)g \ \in \C[G]
\end{equation}
for the corresponding projector.
We have 
$g\cdot e^\chi = \chi(g) e^\chi$ and 
$e^\chi e^{\chi'}=\d(\chi,\chi')e^\chi$ in $\C[G]$
for any $g\in G$ and $\chi, \chi' \in \wh G$,
where $\d$ is the Kronecker delta. 
If $G$ is abelian,
we also have $\sum_{\chi\in\wh G} e^\chi=1$.
The following lemma is an immediate consequence of the definitions
and will be used frequently without further notice.

\begin{lem}\ 
\begin{enumerate}
\item For any $\psi\in\wh\k\setminus\{1\}$ and $\chi\in\wh\m_d$, we have 
\[g_d \cdot e^{(\psi,\chi)} = g(\psi,\chi) e^{(\psi,\chi)}
\ \text{ in } \C[\k\times \m_d].
\]
\item For any $\chi_1,\dots, \chi_n \in \wh\m_d$, we have 
\[j_d^{(n)}\langle c \rangle \cdot e^{(\chi_1,\dots, \chi_n)} =\chi_1\cdots\chi_n(c^\frac{q-1}{d}) j(\chi_1,\dots, \chi_n) e^{(\chi_1,\dots, \chi_n)}
\ \text{ in } \C[\m_d^n].
\]
\end{enumerate}
\end{lem}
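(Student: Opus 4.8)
The plan is to deduce both identities from the single elementary relation $g\cdot e^\chi=\chi(g)e^\chi$ recorded just above (valid in $\C[G]$ for any finite abelian group $G$, any $g\in G$, and any $\chi\in\wh G$), applied termwise to the defining sums of $g_d$ and $j_d^{(n)}\langle c\rangle$, and then to recognize the resulting scalar from the definitions \eqref{eq:def-Gsum} and \eqref{eq:def-Jsum}. There is no genuine obstacle here; the only points that need a moment's care are keeping track of which group each coordinate lives in (the additive group $\k$ versus the multiplicative group $\m_d$) and a harmless reindexing of the summation in part (ii).

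For (i), I would take $G=\k\times\m_d$, with $\k$ regarded as an additive group. Each summand $(m,m^\frac{q-1}{d})$ of $g_d$, indexed by $m\in\k^*$, is then a single element of $G$ (and the restriction to $m\in\k^*$ is exactly what makes $m^\frac{q-1}{d}$ a well-defined element of $\m_d$). Applying the relation above, $(m,m^\frac{q-1}{d})\cdot e^{(\psi,\chi)}=(\psi,\chi)\bigl(m,m^\frac{q-1}{d}\bigr)\,e^{(\psi,\chi)}=\psi(m)\,\chi(m^\frac{q-1}{d})\,e^{(\psi,\chi)}$, and summing over $m\in\k^*$ together with the global sign gives $g_d\cdot e^{(\psi,\chi)}=\bigl(-\sum_{m\in\k^*}\psi(m)\chi(m^\frac{q-1}{d})\bigr)e^{(\psi,\chi)}$, whose coefficient is $g(\psi,\chi)$ by \eqref{eq:def-Gsum}.

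For (ii), I would take $G=\m_d^n$ and argue in the same way: for every tuple $(m_1,\dots,m_n)\in(\k^*)^n$ the element $(m_1^\frac{q-1}{d},\dots,m_n^\frac{q-1}{d})$ of $G$ satisfies $(m_1^\frac{q-1}{d},\dots,m_n^\frac{q-1}{d})\cdot e^{(\chi_1,\dots,\chi_n)}=\bigl(\prod_{i=1}^n\chi_i(m_i^\frac{q-1}{d})\bigr)e^{(\chi_1,\dots,\chi_n)}$, so that $j_d^{(n)}\langle c\rangle\cdot e^{(\chi_1,\dots,\chi_n)}$ equals $\bigl((-1)^{n-1}\sum_{\sum m_i=c}\prod_{i=1}^n\chi_i(m_i^\frac{q-1}{d})\bigr)e^{(\chi_1,\dots,\chi_n)}$. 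For $c\ne 0$ (the only case relevant for the Fermat varieties $F_d^{(n)}\langle c\rangle$) I would then reindex the sum by $m_i\mapsto c\,m_i$, which turns the condition $\sum m_i=c$ into $\sum m_i=1$ and factors the product as $\bigl(\prod_i\chi_i(c^\frac{q-1}{d})\bigr)\prod_i\chi_i(m_i^\frac{q-1}{d})$; the coefficient then becomes $\chi_1\cdots\chi_n(c^\frac{q-1}{d})\cdot j(\chi_1,\dots,\chi_n)$ by \eqref{eq:def-Jsum}, as claimed. Equivalently, one can invoke the already-noted identity $j_d^{(n)}\langle c\rangle=(c^\frac{q-1}{d},\dots,c^\frac{q-1}{d})\cdot j_d^{(n)}$ and combine the case $c=1$ of (ii) with the relation above applied to the single group element $(c^\frac{q-1}{d},\dots,c^\frac{q-1}{d})$.
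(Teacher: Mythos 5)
Your argument is correct and is exactly the intended one: the paper gives no proof, stating only that the lemma is ``an immediate consequence of the definitions,'' and your termwise application of $g\cdot e^\chi=\chi(g)e^\chi$ followed by the reindexing $m_i\mapsto cm_i$ (equivalently, the identity $j_d^{(n)}\langle c\rangle=(c^{\frac{q-1}{d}},\dots,c^{\frac{q-1}{d}})\cdot j_d^{(n)}$ already noted in the text) is precisely that immediate consequence spelled out.
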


\section{Preliminaries on motives}

In this section
$\k$ is an arbitrary field,
which we assume to be perfect from subsection \ref{sect:DM} onward.
Let $\SmProj(\k)$ be
the category of smooth projective varieties over $\kappa$.
We also fix a field $\Lambda$ of characteristic zero.
We write $A_\Lambda := A \otimes_\Z \Lambda$
when $A$ is an abelian group.

\subsection{Chow motives}

Let $\Chow(\kappa, \Lambda)$ be
the \emph{homological} category of Chow motives
over $\kappa$ with coefficients in $\Lambda$
(cf. e.g. \cite[Chapter 20]{MVW}; this is opposite of the one in \cite{Scholl}).
This is a $\Lambda$-linear rigid tensor pseudo-abelian category.
Recall that an object of $\Chow(\k, \L)$ 
can be written as a triple $(X, \pi, r)$
where $X \in \SmProj(\k)$ is  equi-dimensional,
$\pi \in \CH_{\dim X}(X \times X)_\Lambda$ is such that $\pi^2 = \pi$
(with respect to the composition of algebraic correspondences),
and $r \in \Z$.
For two such triples $(X, \pi, r)$ and $(Y, \rho, s)$,
we have
\[ \Hom_{\Chow(\k, \L)}((X, \pi, r), (Y, \rho, s)) 
= \rho \circ \CH_{\dim X + r - s}(X \times Y)_\Lambda \circ \pi.
\]
For $r=0$ we abbreviate $(X, \pi, 0)=(X, \pi)$.

The tensor product on $\Chow(\k, \Lambda)$ is given by 
\[(X, \pi, r) \otimes (Y, \rho, s)=(X\times Y, \pi \times \rho, r+s).\]
We put
$\Lambda(r):=(\Spec \k, \id_{\Spec \k}, r)$
and 
$\Lambda := \Lambda(0)$.
For any $M \in \Chow(\k, \L)$,
we set $M(r) := M \otimes \Lambda(r)$
and write $M^\vee$ for the (strong) dual of $M$.
We have $(X, \pi, r)^\vee(\dim X)=(X, {}^t\pi, -r)$,
where ${}^t \pi$ denotes the transpose of $\pi$.

Suppose $X \in \SmProj(\k)$ is connected of dimension $m$.
Given a $\k$-rational point $x_0 \in X(\k)$,
we define objects in $\Chow(\k,\L)$ by
\begin{align}
\label{eq:def-h0}
h_0(X):=(X, [X \times x_0]) \simeq \Lambda,
\quad
h_{2m}(X):=(X, [x_0 \times X]) \simeq \Lambda(m).
\end{align}
If $m=1$, we further put
\begin{align}
\label{eq:def-h1}
h_1(X):=(X, \id_X-[X \times x_0]-[x_0 \times X])
\in \Chow(\k,\L).
\end{align}
We do not indicate $x_0$ to ease the notation,
although these objects depend on the class of $x_0$ in $\CH_0(X)_\Lambda$.  

There is a \emph{covariant} functor
\[h \colon \SmProj(\k) \to \Chow(\k, \Lambda),  \quad h(X)=(X, \id_X).\]
For a morphism $f\colon X \to Y$ in $\SmProj$, we have
\begin{align*}
&f_*:=h(f)=[\Gamma_f] \colon h(X) \to h(Y),
\end{align*}
where $\Gamma_f \subset X \times Y$ denotes the graph of $f$.
If  $X, Y$ are equi-dimensional, we also have
\begin{align*}
&f^*:=[{}^t\Gamma_f] \colon h(Y) \to h(X)(\dim Y - \dim X).
\end{align*}

\begin{lem}
Let $f\colon X \to Y$ be a generically finite morphism of degree $d$,
where $X, Y \in \SmProj(\k)$ are both irreducible and of the same dimension $m$.
We define a group homomorphism
\begin{equation}\label{eq:f-sharp}
f_\# \colon \End(h(X)) \to \End(h(Y)),
\qquad
f_\#(\a) = \frac{1}{d} (f_*\circ \a \circ f^*),
\end{equation}
where $\End$ denotes the endomorphism ring in $\Chow(\k, \L)$.
\begin{enumerate}
\item 
We have a commutative diagram 
\[\xymatrix{
\End(h(X)) \ar[rr]^{f_\#} \ar@{=}[d] & & \End(h(Y)) \ar@{=}[d]\\
\CH_{m}(X\times X)_\L \ar[rr]^{(1/d)(f\times f)_*} & & \CH_{m}(Y\times Y)_\L.
}\]
\item 
Let $\s_X$ (resp. $\s_Y$) 
be an automorphism of $X$ (resp. $Y$) such that $f\circ \s_X=\s_Y\circ f$. 
Then, $f_\#((\s_X)_*)=(\s_Y)_*$.
\end{enumerate}
\end{lem}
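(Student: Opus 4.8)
The plan is to unwind both claims directly from the definitions of $f_*$, $f^*$, and composition of correspondences, so the proof is essentially a bookkeeping exercise with cycle classes on products.

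For part (i): by the formulas recalled in the excerpt, $f_* = [\Gamma_f]$ and $f^* = [{}^t\Gamma_f]$ (note $\dim X = \dim Y = m$, so no Tate twist appears). An endomorphism $\a \in \End(h(X))$ is a class in $\CH_m(X\times X)_\L$. First I would spell out $f_\#(\a) = \tfrac1d\, [\Gamma_f] \circ \a \circ [{}^t\Gamma_f]$ as a composition of three correspondences $X \leftarrow X \to X \leftarrow Y$... wait — more carefully, $f^*\colon h(Y) \to h(X)$ is ${}^t\Gamma_f \subset Y \times X$, then $\a \subset X\times X$, then $f_* = \Gamma_f \subset X \times Y$, and the composite lives in $\CH_m(Y\times Y)_\L$. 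The key computation is to identify this triple composite with $\tfrac1d\,(f\times f)_*(\a)$, where $(f\times f)\colon X\times X \to Y\times Y$ and $(f\times f)_*$ is proper pushforward. I would do this by the standard projection/base-change manipulation: compose ${}^t\Gamma_f$ with $\a$ to get (up to the factor) the class $(\pr_{13})_*\big( (\pr_{12}^*\,{}^t\Gamma_f)\cdot(\pr_{23}^*\,\a)\big)$ on $Y\times X$, then compose with $\Gamma_f$ similarly. Each $\Gamma_f$-composition has the effect of pushing forward along $f$ in one coordinate (this is exactly the content of the identities $\Gamma_f \circ Z = (f\times\id)_* Z$ and $Z \circ {}^t\Gamma_f = (\id\times f)_* Z$ for a correspondence $Z$ on $X\times X$); stringing the two together yields $(f\times f)_*\a$, and the two $\tfrac1d$ factors... actually there is only one $\tfrac1d$ in the definition, and one needs $f^* \circ f_* $-type degree cancellations — more precisely $(\id\times f)_*\circ({}^t\Gamma_f\text{-stuff})$ does not introduce a degree, so the single $\tfrac1d$ survives and matches. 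The cleanest route is to verify the identity after applying a realization or, better, to cite the projection formula and the elementary fact that $[\Gamma_f]\circ[{}^t\Gamma_f] = \deg(f)\cdot[\Delta_Y] + (\text{stuff supported over the branch locus})$ is \emph{not} needed — only the pushforward identity is. So concretely: show $[\Gamma_f]\circ Z\circ[{}^t\Gamma_f] = (f\times f)_*Z$ for any $Z\in\CH_m(X\times X)_\L$, which gives the diagram with the stated $\tfrac1d$ normalization.

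For part (ii): once (i) is in hand, apply it to $\a = (\s_X)_* = [\Gamma_{\s_X}]$. Then $f_\#((\s_X)_*) = \tfrac1d\,(f\times f)_*[\Gamma_{\s_X}]$. Now $\Gamma_{\s_X} = \{(x,\s_X x)\} \subset X\times X$, and $(f\times f)$ maps it onto $\Gamma_{\s_Y}\subset Y\times Y$ because of the hypothesis $f\circ\s_X = \s_Y\circ f$: indeed $(f\times f)(x,\s_X x) = (fx, f\s_X x) = (fx, \s_Y f x)$. So the image cycle is $\Gamma_{\s_Y}$, and the pushforward is $(\deg\text{ of }\Gamma_{\s_X}\to\Gamma_{\s_Y})\cdot[\Gamma_{\s_Y}]$. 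Since $\s_X$ is an isomorphism, $\Gamma_{\s_X}\to\Gamma_{\s_Y}$ is identified via the first projections with $f\colon X\to Y$ itself (the first projection is an isomorphism on each graph), hence has degree $d$. Therefore $(f\times f)_*[\Gamma_{\s_X}] = d\,[\Gamma_{\s_Y}]$ and $f_\#((\s_X)_*) = [\Gamma_{\s_Y}] = (\s_Y)_*$, as desired.

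\textbf{Main obstacle.} The only genuinely delicate point is the correspondence-algebra identity underlying part (i) — getting the Tate-twist-free, degree-$d$-normalized formula $[\Gamma_f]\circ Z\circ[{}^t\Gamma_f] = (f\times f)_*Z$ exactly right, including checking that the definition of composition of correspondences (with its own projections and intersection products) really does collapse to proper pushforward along $f\times f$. This requires care with the projection formula and with the fact that ${}^t\Gamma_f$ is a correspondence of the correct degree so that no extra dimension shift or intersection-multiplicity factor sneaks in; the hypothesis that $f$ is generically finite (rather than flat or finite) means one should phrase the pushforward on the level of Chow groups, where it is unproblematic. Part (ii) is then a short and essentially formal consequence.
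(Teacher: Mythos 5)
Your proposal is correct and follows essentially the same route as the paper: the identity $[\Gamma_f]\circ Z\circ[{}^t\Gamma_f]=(f\times f)_*Z$ that you propose to verify for part (i) is exactly Lieberman's lemma, which the paper simply cites, and your part (ii) is the same degree-$d$ pushforward computation of the graph $\Gamma_{\sigma_X}$ that the paper carries out via the commutative square $(f\times f)\circ(\id_X\times\sigma_X)=(\id_Y\times\sigma_Y)\circ f$. (The only quibble is that your intermediate identities $\Gamma_f\circ Z=(f\times\id)_*Z$ and $Z\circ{}^t\Gamma_f=(\id\times f)_*Z$ have the two factors swapped relative to the usual source-first convention, but this does not affect the combined formula.)
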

\begin{proof}
(i) is a consequence of 
Lieberman's lemma (cf. e.g. \cite[Lemma 2.1.2]{murre}).
To see (ii), we consider the commutative diagram
\[
\xymatrix{
X \times X \ar[rr]^{f \times f} & &
Y \times Y 
\\
X \ar[u]^{\id_X \times \sigma_X} \ar[rr]_f & &
Y \ar[u]_{\id_Y \times \sigma_Y}.
}
\]
We then compute
\begin{align*}
(f \times f)_* ([\Gamma_{\s_X}])
&= ((f \times f) \circ (\id_X \times \s_X))_*([X])
\\
&= ((\id_Y \times \s_Y) \circ f)_*([X])
= (\id_Y \times \s_Y)_*(d[Y])
= d[\Gamma_{\s_Y}].
\end{align*}
We now apply (i) to conclude (ii).
\end{proof}

\begin{epl}
We will use this lemma in the following situations. 
\begin{enumerate}
\item If $f$ is an isomorphism, then $f^*=(f_*)^{-1}$ and $f_\#$ is a ring isomorphism.  
\item
If finite groups $G$ and $G'$ act on $X$ and $Y$ respectively,  and the actions are compatible under $f$ and a homomorphism $g\colon G \to G'$, then the diagram 
\[\xymatrix{
\L[G] \ar[r]^g \ar[d]& \L[G'] \ar[d]\\
\End(h(X)) \ar[r]^{f_\#} &\End(h(Y))
}\]
commutes. Hence the restriction of $f_\#$ to the image of $\L[G]$ is a ring homomorphism. 
\item
Let $\k$ be a finite field with $q$ elements and $\Fr_X$ be the $q$th power Frobenius endomorphism of $X$. 
Then $f_\#(\Fr_X)=\Fr_Y$ holds, because we have
\begin{equation}\label{eq:Fr-comm-any-f}
f \circ \Fr_X=\Fr_Y\circ f
\end{equation}
for any morphism $f \colon X \to Y$ in $\Chow(\k, \Lambda)$
by \cite[p. 80]{Kleiman2}
(see also \cite[Proposition 2]{Soule}).
\end{enumerate}\end{epl}

For later use, we state an elementary lemma.

\begin{lem}\label{lem:scalar-ext}
Let $\Lambda'$ be a field extension of $\Lambda$.
Then the scalar extension functor $\Chow(\k, \L) \to \Chow(\k, \L')$
is conservative.
\end{lem}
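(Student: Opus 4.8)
The plan is to reduce the statement to elementary linear algebra, exploiting that $\Chow(\k,\Lambda')$ is obtained from $\Chow(\k,\Lambda)$ by extending scalars on the Hom-groups.

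First I would establish the following base-change formula for morphisms. For objects $M=(X,\pi,r)$ and $N=(Y,\rho,s)$ of $\Chow(\k,\Lambda)$, let $M',N'$ denote their images in $\Chow(\k,\Lambda')$. Since $\CH_j(X\times Y)_{\Lambda'}=\CH_j(X\times Y)_{\Lambda}\otimes_{\Lambda}\Lambda'$ by definition, and since $\Hom_{\Chow(\k,\Lambda)}(M,N)$ is the image of the $\Lambda$-linear idempotent $\alpha\mapsto\rho\circ\alpha\circ\pi$ on $\CH_{\dim X+r-s}(X\times Y)_{\Lambda}$, while forming the image of an idempotent commutes with $\otimes_\Lambda\Lambda'$, one obtains a natural isomorphism of $\Lambda'$-vector spaces
\[
\Hom_{\Chow(\k,\Lambda')}(M',N')\;\cong\;\Hom_{\Chow(\k,\Lambda)}(M,N)\otimes_{\Lambda}\Lambda'
\]
that is compatible with composition (by $\Lambda$-bilinearity of the composition of correspondences) and sends $\id_M$ to $\id_{M'}$.

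Next I would argue as follows. Suppose $f\colon M\to N$ in $\Chow(\k,\Lambda)$ has image $f'$ that is an isomorphism in $\Chow(\k,\Lambda')$. Consider the $\Lambda$-linear map $(-)\circ f\colon\Hom_{\Chow(\k,\Lambda)}(N,M)\to\End_{\Chow(\k,\Lambda)}(M)$. By the displayed formula its base change along $\Lambda\hookrightarrow\Lambda'$ is the map $(-)\circ f'$, which is bijective since $f'$ is invertible; because $\Lambda'$ is faithfully flat over $\Lambda$, the map $(-)\circ f$ is itself bijective, so there is $g\in\Hom_{\Chow(\k,\Lambda)}(N,M)$ with $g\circ f=\id_M$. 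Applying the same reasoning to $f\circ(-)\colon\Hom_{\Chow(\k,\Lambda)}(N,M)\to\End_{\Chow(\k,\Lambda)}(N)$ yields $h$ with $f\circ h=\id_N$. Then $g=g\circ(f\circ h)=(g\circ f)\circ h=h$, so $f$ is an isomorphism, which is exactly what conservativity asserts.

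I do not expect any serious obstacle: the argument is entirely formal. The one point that needs a little care is the base-change formula for Hom-groups together with its compatibility with composition; the commutative-algebra input is merely flatness of $\Lambda'/\Lambda$ (to commute $\otimes_\Lambda\Lambda'$ with images) and faithful flatness (to descend bijectivity of $(-)\circ f$ and $f\circ(-)$).
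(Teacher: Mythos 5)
Your proof is correct and follows essentially the same route as the paper: both arguments rest on the base-change isomorphism $\Hom_{\Chow(\k,\L')}(M_{\L'},N_{\L'})\cong\Hom_{\Chow(\k,\L)}(M,N)\otimes_{\L}\L'$ together with faithful flatness of $\L'$ over $\L$ to descend bijectivity. The only (cosmetic) difference is that the paper tests $f_*$ on $\Hom(L,-)$ for all $L$ and invokes Yoneda, whereas you apply the same descent to $(-)\circ f$ and $f\circ(-)$ to produce an explicit two-sided inverse.
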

\begin{proof}
Let $f \colon M \to N$ be a morphism in $\Chow(\k, \Lambda)$
and assume that 
its scalar extension 
$f_{\Lambda'} \colon M_{\Lambda'} \to N_{\Lambda'}$ 
is an isomorphism in $\Chow(\k, \Lambda')$.
We must show that $f$ is an isomorphism in $\Chow(\k, \Lambda)$.
By Yoneda's lemma, it suffices to show that 
\[ f_* \colon \Hom_{\Chow(\k, \L)}(L, M) \to \Hom_{\Chow(\k, \L)}(L, N) \] 
is bijective for any $L \in \Chow(\k, \L)$.
This follows from the bijectivity of 
\[ f_{\Lambda' *} \colon \Hom_{\Chow(\k, \L')}(L_{\Lambda'}, M_{\Lambda'}) 
\to \Hom_{\Chow(\k, \L')}(L_{\Lambda'}, N_{\Lambda'}) 
\] 
since $\L'$ is faithfully flat over $\L$.
\end{proof}

\subsection{Voevodsky motives} \label{sect:DM}
From now on we assume that $\k$ is perfect.
Let $\DM(\k, \L)$ be Voevodsky's category
of geometric mixed motives over $\k$ with coefficients in $\L$
(cf., e.g. \cite{MVW}).
This is a $\Lambda$-linear rigid tensor pseudo-abelian triangulated category 
equipped with a covariant functor
$M \colon \Sm(\k) \to \DM(\k, \L)$,
where $\Sm(\k)$ is the category of 
smooth separated schemes of finite type over $\k$.
There is a fully faithful tensor functor
\begin{equation}\label{eq:Chow-DM}
\ol{M} \colon \Chow(\k, \L) \to \DM(\k, \L)
\end{equation}
such that $M \circ i =\ol{M} \circ h$,
where $i \colon \SmProj(\k) \to \Sm(\k)$ is the inclusion functor.
This fact is first proved 
by Voevodsky \cite[Corollary 4.2.6]{VoTmot} 
(see also \cite[Proposition 20.1]{MVW})
under the assumption that $\k$ admits resolution of singularities.
See \cite[6.7.3]{DG} for a proof over an arbitrary perfect base field.
For each $i \in \Z$ and an object $N$ of $\DM(\k, \L)$, 
we put $\Lambda(i) := \ol{M}(\Lambda(i))[-2i]$
and $N(i) := N \otimes \Lambda(i)$,
where $[-]$ denotes the shift functor.

For later use, we record the blow-up formula:

\begin{ppn}\label{prop:blow-up}
Let $V$ be a smooth variety over $\k$
and $Z \subset V$ a smooth closed subvariety of pure codimension $c$.
Let $f \colon U \to V$ be the blow-up along $Z$.
Then we have an isomorphism in $\DM(\k, \Lambda)$
\[
M(U) \simeq M(V) \oplus \left( \bigoplus_{i=1}^{c-1} M(Z)(i)[2i] \right).
\]
If further $V$ is projective over $\k$,
then $f$ also induces an isomorphism
$h(U) \simeq h(V) \oplus \left( \bigoplus_{i=1}^{c-1} h(Z)(i) \right)$
in $\Chow(\k, \L)$.
\end{ppn}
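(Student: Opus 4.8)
The plan is to prove the blow-up formula in $\DM(\k,\L)$ first and then deduce the Chow-motive statement by the full faithfulness of $\ol{M}$ in \eqref{eq:Chow-DM}. For the triangulated statement, I would invoke the Gysin distinguished triangle attached to the blow-up square, which is one of the basic structural results available in Voevodsky's category: associated to the Cartesian diagram with $E = f^{-1}(Z) \to U$ the exceptional divisor (a $\P^{c-1}$-bundle over $Z$) and $Z \hookrightarrow V$, there is a distinguished triangle
\[
M(E) \lra M(U) \oplus M(Z) \lra M(V) \lra M(E)[1]
\]
in $\DM(\k,\L)$. This is established, e.g., in \cite{MVW} (or in \cite{VoTmot}) over a perfect base field, so I may cite it directly.

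Next I would use the projective bundle formula in $\DM(\k,\L)$: since $E \to Z$ is a projective bundle of relative dimension $c-1$ (the projectivization of the normal bundle $N_{Z/V}$), one has a canonical isomorphism $M(E) \simeq \bigoplus_{i=0}^{c-1} M(Z)(i)[2i]$, with the $i=0$ summand being $M(Z)$ mapped in by the structure map $E \to Z$ (equivalently split off by the zero section $Z \to E$). Plugging this into the Gysin triangle, the $i=0$ component of $M(E)$ maps isomorphically (via $M(Z) \to M(U)$ composed with... more precisely, the map $M(Z) \to M(U)\oplus M(Z)$ on that summand is, up to sign, the diagonal-type map whose $M(Z)$-to-$M(Z)$ component is the identity), so the triangle splits: the copy of $M(Z)$ inside $M(E)$ cancels against the copy of $M(Z)$ in the middle term, and one is left with
\[
M(U) \simeq M(V) \oplus \left( \bigoplus_{i=1}^{c-1} M(Z)(i)[2i] \right),
\]
as claimed. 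I would carry out this splitting argument by checking that the relevant composite $M(Z) \to M(E) \to M(U)\oplus M(Z) \to M(V)$ vanishes (because $E \to U \to V$ factors through $Z \to V$ and the two maps $Z \to V$ agree, giving a cancellation) while the $M(Z) \to M(Z)$ component is an isomorphism; then a standard lemma on distinguished triangles with a split-off direct summand yields the decomposition.

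For the second assertion, suppose in addition that $V$ (hence $U$, $Z$, $E$) is smooth projective over $\k$. Then all of $M(U), M(V), M(Z)(i)[2i]$ lie in the essential image of $\ol{M} \colon \Chow(\k,\L) \to \DM(\k,\L)$, namely $\ol{M}(h(U)), \ol{M}(h(V)), \ol{M}(h(Z)(i))$, using $\ol{M}(h(Z))(i)[2i] = \ol{M}(h(Z)(i))$ by the definition of the Tate twist in $\DM$ recalled in the text. Since $\ol{M}$ is fully faithful and additive, the isomorphism $\ol{M}(h(U)) \simeq \ol{M}(h(V) \oplus \bigoplus_{i=1}^{c-1} h(Z)(i))$ descends to an isomorphism $h(U) \simeq h(V) \oplus \bigoplus_{i=1}^{c-1} h(Z)(i)$ in $\Chow(\k,\L)$, and one checks it is induced by $f$ because the maps in the Gysin triangle are the motivic realizations of the geometric maps $f$, the inclusion $E\hookrightarrow U$, etc.

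The main obstacle is purely bookkeeping rather than conceptual: one must pin down exactly which component maps in the Gysin triangle, after inserting the projective bundle decomposition, are identities or isomorphisms, so that the formal splitting lemma applies and the resulting summand is genuinely the image of $f_*$ (rather than $f_*$ twisted by some automorphism). I would handle this by reducing to the known behavior of the projective bundle formula with respect to the zero section and to the compatibility of the Gysin triangle with pushforward; none of this requires resolution of singularities since the cited form of the Gysin triangle and the projective bundle formula are available over any perfect field (cf. \cite{DG}).
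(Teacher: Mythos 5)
Your overall plan --- prove the statement in $\DM(\k,\L)$ and transport it to $\Chow(\k,\L)$ via the full faithfulness of \eqref{eq:Chow-DM} --- is exactly what the paper does, and your treatment of the second assertion is fine. For the first assertion, however, the paper simply cites the blow-up formula \cite[Corollary 15.13]{MVW} (which \emph{is} the displayed isomorphism), whereas you attempt to rederive it from the blow-up distinguished triangle and the projective bundle formula; and your derivation has a gap at the decisive step.

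Concretely: after inserting $M(E)\simeq M(Z)\oplus R$ with $R=\bigoplus_{i=1}^{c-1}M(Z)(i)[2i]$ into the triangle $M(E)\to M(U)\oplus M(Z)\to M(V)\to M(E)[1]$ and cancelling the copy of $M(Z)$ (which is legitimate, since the component $M(Z)\to M(Z)$ is the identity), what you obtain is a \emph{distinguished triangle}
\[
R \lra M(U) \lra M(V) \lra R[1],
\]
not an isomorphism $M(U)\simeq M(V)\oplus R$. The latter requires in addition that this triangle splits, i.e.\ that the connecting map $M(V)\to\bigoplus_{i=1}^{c-1}M(Z)(i)[2i+1]$ vanishes, or equivalently that $R\to M(U)$ admits a retraction (or $M(U)\to M(V)$ a section). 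This is the only nontrivial content of the proposition, and it is exactly the point you dismiss as ``bookkeeping.'' It is not automatic: $V$ is not assumed proper, so there is no duality-supplied $f^*\colon M(V)\to M(U)$, and no general vanishing of the relevant Hom group is available off the shelf; your ``standard lemma on distinguished triangles with a split-off direct summand'' only justifies the cancellation of $M(Z)$, not the splitting of what remains. One can repair the argument, e.g.\ by building a retraction of $R\to M(U)$ out of the Gysin map $M(U)\to M(E)(1)[2]$ for the smooth divisor $E\subset U$ together with the projective bundle formula (inverting a unipotent matrix of Chern-class operators), or, as in \cite{MVW}, by writing down the candidate map $M(U)\to M(V)\oplus R$ directly and checking that it is an isomorphism Zariski-locally on $V$, where the blow-up becomes that of $Z\times\A^c$ along $Z\times\{0\}$. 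Some such input must be supplied; as written, the splitting is asserted rather than proved. (The shortest correct route remains the paper's: cite \cite[Corollary 15.13]{MVW} for the first statement and apply \eqref{eq:Chow-DM} for the second.)
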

\begin{proof}
The first statement is 
the blow-up formula \cite[Corollary 15.13]{MVW},
and the second follows from \eqref{eq:Chow-DM}.
(The latter is also seen from \cite[Theorems 2.5, 2.8]{Scholl}.)
\end{proof}

If a finite group $G$ acts (from left) on a motive $M$ (that is, an object of either
$\Chow(\k, \L)$ or $\DM(\k, \L)$),
we write $M^\chi$ for the image of the projector
$e^\chi$ from \eqref{eq:def-e-chi}
for  $\chi \in \wh{G}$.

\begin{ppn}\label{localization}
Let $G$ be a finite group and $\chi \in \wh G$. 
Suppose that $\L$ is large enough to contain the values of $\chi$. 
Let $U, V$ be smooth varieties with $G$-action,
and let $f\colon U \to V$  be a $G$-equivariant morphism.
If one of the following conditions is satisfied,
then $f$ induces an isomorphism $M(U)^\chi \simeq M(V)^\chi$ in $\DM(k,\L)$.

\begin{enumerate}
\item 
$f\colon U \to V$ is an open immersion such that
each irreducible component $T$ of $V \setminus f(U)$ 
is smooth and $G$-stable with $M(T)^\chi=0$.
\item 
$f \colon U \to V$ is a finite generically Galois morphism
with $\Gal(U/V) \subset G$ such that $\chi|_{\Gal(U/V)}=1$.
\end{enumerate}
If further $U, V$ are projective over $\k$,
then $f$ also induces an isomorphism
$h(U)^\chi\simeq h(V)^\chi$ in $\Chow(\k, \L)$.
\end{ppn}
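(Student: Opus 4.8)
The plan is to prove the two cases separately in $\DM(\k,\L)$ and then descend to $\Chow(\k,\L)$ via the full faithfulness of $\ol M$ in \eqref{eq:Chow-DM}. For case (i), I would argue by induction on the number of irreducible components of the closed complement $W := V \setminus f(U)$, using the Gysin (localization) triangle in $\DM(\k,\L)$. Concretely, write $W = T \cup W'$ where $T$ is one smooth $G$-stable component with $M(T)^\chi = 0$ and $W'$ is the union of the remaining components. Since $G$ permutes the components of $W$ and each is $G$-stable by hypothesis, I can apply the localization triangle $M(V \setminus T) \to M(V) \to M(T)(c)[2c] \to$ (valid because $T$ is smooth of pure codimension $c$ in the smooth variety $V$), which is $G$-equivariant because $T$ is $G$-stable. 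Applying the exact projector $e^\chi$ and using $M(T)^\chi = 0$ (hence also $M(T)(c)[2c]$ has vanishing $\chi$-part) yields $M(V\setminus T)^\chi \simeq M(V)^\chi$. Iterating over the components of $W$ gives $M(U)^\chi = M(V\setminus W)^\chi \simeq M(V)^\chi$, as desired. One subtlety I must address: the intermediate opens $V \setminus T$ etc.\ need not be smooth if the components of $W$ meet, so I should instead remove components in an order compatible with a stratification, or equivalently work with the filtration by closed subsets and invoke the localization triangle for each smooth locally closed stratum; alternatively, since all components are $G$-stable and smooth, I can shrink $V$ around the generic points appropriately. This bookkeeping is the first place care is needed.

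For case (ii), let $H := \Gal(U/V) \subset G$ with $\chi|_H = 1$. The morphism $f\colon U \to V$ is finite and generically Galois with group $H$, so after passing to a dense open $V^\circ \subset V$ over which $f$ is étale Galois, the transfer (trace) morphism furnishes $M(V^\circ) \to M(U^\circ)$ splitting $M(U^\circ) \to M(V^\circ)$ off the $H$-invariants; more precisely $M(U^\circ)^{H} \simeq M(V^\circ)$ in $\DM$ with $\Q$- (hence $\L$-) coefficients, using the standard fact that for a finite étale Galois cover the motive of the base is the $H$-invariant part of the motive of the cover. Since $\chi|_H = 1$, the projector $e^\chi_G$ factors through $e^{\1}_H$-invariants, so $M(U^\circ)^\chi = (M(U^\circ)^H)^\chi \simeq M(V^\circ)^\chi$. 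To remove the restriction to $V^\circ$, I combine this with case (i): the complements $V \setminus V^\circ$ and $U \setminus U^\circ$ must be handled, but this requires their $\chi$-parts to vanish, which is not automatic. So the cleaner route is to avoid shrinking: use that $f$ finite and generically Galois means $f_* f^* = [H] = \sum_{h\in H} h$ as an endomorphism of $M(V)$ up to the ramification locus — here I would instead invoke directly that for $f$ finite with $\deg f = |H|$ and $G$-equivariant, the composite $\frac{1}{|H|} f_* \circ f^*$ is the identity on $M(V)^\chi$ and $\frac{1}{|H|} f^* \circ f_*$ is $e^{\1}_H$ on $M(U)$, both of which restrict to the identity on the $\chi$-part since $\chi|_H=1$; the needed identity $f_* f^* = |H|\cdot\id$ on $M(V)$ holds by the projection formula / degree argument for generically finite morphisms (cf.\ the lemma computing $f_\#$ earlier), which is valid in $\DM$ for smooth varieties. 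This gives mutually inverse maps between $M(U)^\chi$ and $M(V)^\chi$ without any shrinking.

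Finally, for the projective case, I observe that when $U, V$ are smooth projective, all the morphisms above ($f_*$, $f^*$, the Gysin maps, the projectors) are already realized on the level of $\Chow(\k,\L)$: $f_*$ and $f^*$ are classes of the graph and its transpose, $e^\chi$ is an idempotent correspondence, and the Gysin triangle's connecting maps become the standard blow-up / projective bundle contributions which split in $\Chow$. Since $\ol M\colon \Chow(\k,\L)\to\DM(\k,\L)$ is fully faithful, an isomorphism $h(U)^\chi \to h(V)^\chi$ in $\Chow$ can be detected after applying $\ol M$; the map constructed above (e.g.\ $\frac1{|H|} f_*$ or the composite of Gysin isomorphisms) is a morphism in $\Chow$ whose image under $\ol M$ is the isomorphism $M(U)^\chi \simeq M(V)^\chi$, hence it is itself an isomorphism. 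I expect the main obstacle to be case (i): correctly setting up the iterated localization triangles when the components of $V\setminus f(U)$ intersect, so that each step stays within smooth schemes and the triangles remain $G$-equivariant — once the stratification bookkeeping is right, the vanishing of the $\chi$-parts propagates formally.
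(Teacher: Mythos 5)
Your proposal is correct and follows essentially the same route as the paper: case (i) is the localization (Gysin) triangle of \cite[Theorem 15.15]{MVW}, case (ii) rests on the identities $f_*\circ f^*=(\deg f)\cdot\id$ and $f^*\circ f_*=\sum_{h\in\Gal(U/V)}h_*$ for finite correspondences (the paper's Lemma \ref{lem:fin-cor}), and the Chow statement follows from the full faithfulness of \eqref{eq:Chow-DM}. The one point you rightly flag --- iterating the Gysin triangle when the components of $V\setminus f(U)$ intersect, where one needs the vanishing of the $\chi$-part of each successive locally closed stratum rather than merely of the components themselves --- is genuine, but it is equally elided in the paper's one-line proof; in the actual applications the components are either pairwise disjoint or a subgroup on which $\chi$ is nontrivial acts trivially on the entire complement, so the required vanishing holds stratum by stratum.
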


\begin{proof}
The first statement 
for the case (i) and (ii)
follows respectively from 
the localization sequence \cite[Theorem 15.15]{MVW}
and Lemma \ref{lem:fin-cor} below.
The second follows from \eqref{eq:Chow-DM}.
\end{proof}

In the following lemma,
we denote by $\Cor(X, Y)$ the
group of finite correspondences for $X, Y \in \Sm(\k)$
(cf. \cite[Definition 1.1]{MVW}).

\begin{lem}\label{lem:fin-cor}
If $f\colon U\to V$ is a finite generically Galois morphism in $\Sm(\k)$,
then we have equalities
\[
f_* \circ f^* = (\deg f) \cdot \id_V \quad \text{in} \  \Cor(V, V),
\quad
f^* \circ f_* = \sum_{g \in \Gal(U/V)} g_* \quad \text{in} \  \Cor(U, U).
\]
\end{lem}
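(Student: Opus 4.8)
The plan is to verify both identities by reducing to the local, geometric picture near the generic point of $V$ and then extending by the standard fact that a finite correspondence between smooth varieties is determined by its restriction to a dense open subset (together with properness of the relevant cycles). First I would recall that $f\colon U\to V$ being finite and generically Galois means there is a dense open $V_0\subset V$ over which $f$ restricts to an honest Galois covering $U_0\to V_0$ with group $\Gamma:=\Gal(U/V)$, and $U_0=f^{-1}(V_0)$. Since $U,V$ are smooth, $\Cor(V,V)\to\Cor(V_0,V_0)$ and $\Cor(U,U)\to\Cor(U_0,U_0)$ are injective (a finite correspondence is an algebraic cycle on the product all of whose components dominate a smooth, hence normal, component of the source, so it is determined by its restriction to any dense open of the source); thus it suffices to prove the two equalities after restricting everything to $V_0$ and $U_0$.

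Over $V_0$ the morphism $f_0\colon U_0\to V_0$ is finite \'etale Galois, so here the computation is the classical one. For $f_{0*}\circ f_0^*$: the composite is represented by the cycle $[{}^t\Gamma_{f_0}]\circ[\Gamma_{f_0}]$ on $V_0\times V_0$, which is the pushforward along $U_0\to V_0\times V_0$, $u\mapsto(f_0(u),f_0(u))$ of the fundamental class, i.e. $(\deg f)$ times the diagonal, giving $(\deg f)\cdot\id_{V_0}$. For $f_0^*\circ f_0^{}{}_*$: the composite $[\Gamma_{f_0}]\circ[{}^t\Gamma_{f_0}]$ on $U_0\times U_0$ is the image of the fibre product $U_0\times_{V_0}U_0$; since the covering is Galois, the natural map $\coprod_{g\in\Gamma}U_0\to U_0\times_{V_0}U_0$, $(g,u)\mapsto(u,gu)$, is an isomorphism, and the $g$th component maps isomorphically onto the graph $\Gamma_{g}$. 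Hence the composite cycle is $\sum_{g\in\Gamma}[\Gamma_g]=\sum_{g\in\Gamma}g_*$ in $\Cor(U_0,U_0)$. Restricting the asserted global identities to $V_0$, $U_0$ yields exactly these, so by the injectivity above the global identities follow.

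The main obstacle, such as it is, is purely bookkeeping: one must be careful that the intersection-theoretic compositions of finite correspondences really do compute as the naive set-theoretic/scheme-theoretic fibre products claim, which requires that the relevant intersections in $U\times V\times U$ (resp. $V\times U\times V$) be proper and of the expected dimension. This is automatic over the \'etale locus $V_0$ because there the maps are flat and unramified, so the fibre products are smooth of the right dimension with trivial multiplicities; the only thing to check is that no extraneous components or multiplicities $\neq 1$ appear, which is exactly the statement that $U_0\times_{V_0}U_0\xrightarrow{\sim}\coprod_\Gamma U_0$. I would state this as the key geometric input and refer to \cite[Definition 1.1]{MVW} and the standard properties of composition of finite correspondences, rather than redoing the transversality check.

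Finally, I should note that the second displayed equality in Lemma~\ref{lem:fin-cor} is exactly what is needed in the proof of Proposition~\ref{localization}(ii): applying the projector $e^\chi$ with $\chi|_\Gamma=1$, the operator $f^*\circ f_*=\sum_{g\in\Gamma}g_*$ acts on $M(U)^\chi$ as multiplication by $\sum_{g\in\Gamma}\chi(g)=|\Gamma|=\deg f$, while $f_*\circ f^*=(\deg f)\cdot\id$ on $M(V)^\chi$, so $f_*$ and $f^*$ are mutually inverse up to the invertible scalar $\deg f\in\L^\times$; this is the remark I would make immediately after the lemma to motivate it.
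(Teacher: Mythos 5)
Your argument is essentially the paper's: restrict to a dense open over which $f$ is an honest Galois covering, do the fibre-product computation there (the paper pushes the reduction all the way to the generic point and then cites \cite[Exercise 1.11]{MVW}), and conclude by a density argument. The computations over the \'etale locus are correct. The one step you should restate more carefully is the injectivity claim: the map $\Cor(V,V)\to\Cor(V_0,V_0)$ obtained by restricting \emph{both} factors is not injective in general --- a component of a finite correspondence need only be finite and surjective over the source, so e.g.\ the graph of a constant map into a point of $V\setminus V_0$ restricts to zero. What your justification actually proves is injectivity of $\Cor(V,V)\to\Cor(V_0,V)$ (restriction of the source only), and this is exactly the reduction the paper uses, via the compatibilities $(f_*\circ f^*)\circ j=j\circ(f|_{U'*}\circ f|_{U'}^*)$ and $\id_V\circ j=j\circ\id_{V'}$ in $\Cor(V',V)$. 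Alternatively, your two-sided restriction is legitimate here because every component of the supports of $f_*\circ f^*$, $\id_V$, $f^*\circ f_*$ and $\sum_g g_*$ dominates the target as well as the source; either remark closes the gap, so the proof goes through.
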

\begin{proof}
For an open dense immersion $j \colon V' \to V$,
we have injections
\[
\xymatrix{
\Cor(V', V')  \ar@{^{(}->}[r]_{j \circ -} \ 
&
\Cor(V', V)
&
\ \Cor(V, V)  \ar@{_{(}->}[l]^{- \circ j}. 
}
\]
Putting $U':=f^{-1}(V')$, we have
\[
(f_* \circ f^*) \circ j = j \circ (f|_{U'*} \circ f|_{U'}^*),
\quad
\id_V \circ j = j \circ \id_{V'}
\quad
\ \text{in} \ \Cor(V', V),
\]
which reduces the first statement 
to the case $V$ is the spectrum of a field.
A similar argument reduces the second statement to the same case.
Then both statements are found in \cite[Exercise 1.11]{MVW}.
\end{proof}

\subsection{Invertible objects}

Let $\sC$ be a $\L$-linear rigid tensor pseudo-abelian category
such that the endomorphism ring of the unit object
is canonically isomorphic to $\Lambda$.
(We shall apply the following discussion
to $\sC=\Chow(\k, \L), \DM(\k, \L)$.)
Recall that 
an object $L$ of $\sC$ is called \emph{invertible} if
the evaluation map $L^\vee \otimes L \to \Lambda$ is an isomorphism,
where $L^\vee$ denotes the (strong) dual of $L$.
It then follows that 
$\End(L) \simeq \Lambda$ 
and hence $L$ is indecomposable
(that is, $\End(L)$ has no projectors other than $0, 1$).
We will use the following result of Krull-Schmidt type.

\begin{ppn}\label{prop:invertible}
Let 
$L_1, \dots, L_n$ be invertible objects of $\sC$,
and put $M:=L_1 \oplus \cdots \oplus L_n$.
Let $N_1, N_2$ be objects of $\sC$
such that there are isomorphisms
$M \simeq N_1 \oplus N_2$ 
and 
$N_1 \simeq L_1 \oplus \cdots \oplus L_r$ for some $1 \le r \le n$.
Then there is an isomorphism
$N_2 \simeq L_{r+1} \oplus \cdots \oplus L_n$.
\end{ppn}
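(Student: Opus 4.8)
The plan is to prove this Krull–Schmidt–type statement by exploiting the fact that the $L_i$ are invertible, hence indecomposable with $\End(L_i)\simeq\Lambda$ a local ring (in fact a field). The cleanest route is \emph{not} to invoke a general Krull–Schmidt theorem for $\sC$ (which need not hold, since $\sC$ need not have all endomorphism rings local), but rather to argue directly using the hom-spaces. First I would record the elementary but crucial observation that for invertible objects $L, L'$ one has $\Hom_\sC(L, L') \simeq \Hom_\sC(\Lambda, L^\vee\otimes L')$, and $L^\vee\otimes L'$ is again invertible; so $\Hom_\sC(L,L')$ is either $0$ (when $L\not\simeq L'$) or a one-dimensional $\Lambda$-vector space with any nonzero element an isomorphism (when $L\simeq L'$). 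This is the substitute for "the $L_i$ have local endomorphism rings" that makes the exchange argument go through.

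Next I would set up the two projector decompositions. Fix an isomorphism $\varphi\colon M \xrightarrow{\sim} N_1\oplus N_2$ and compose with the given $N_1\simeq L_1\oplus\cdots\oplus L_r$; this presents $L_1\oplus\cdots\oplus L_n$ and a decomposition with first $r$ summands again $L_1,\dots,L_r$, and an unknown complement $N_2$. Write $p\colon M\to M$ for the idempotent projecting onto the $N_2$-part (via $\varphi$), and write $\iota_j, \pi_j$ for the structural inclusions/projections of the decomposition $M=\bigoplus_i L_i$. The goal is to show $N_2\simeq\bigoplus_{i>r}L_i$, equivalently that the idempotent $p$ is conjugate in $\End(M)$ to the "standard" idempotent $e:=\sum_{i>r}\iota_i\pi_i$. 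I would carry this out by an exchange/induction argument: it suffices to show that for each index $k$ with $r<k\le n$, the composite $L_k\hookrightarrow M \xrightarrow{p} N_2$ (or a suitable modification of it) splits off a copy of $L_k$ from $N_2$, peeling the summands off one at a time and reducing $n$. Concretely, one shows $p\iota_k\pi_k p$ generates, together with the already-split-off summands, enough of $\End(N_2)$; the one-dimensionality of the relevant $\Hom(L_i,L_j)$ forces the "off-diagonal" entries to vanish or the "diagonal" ones to be invertible, and a standard lifting-idempotents manipulation produces the splitting.

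Alternatively — and this is probably the slickest way to write it — I would deduce the statement from the additive Krull–Schmidt theorem applied \emph{only} to the minimal full additive subcategory containing $L_1,\dots,L_n$ and closed under direct summands: in that subcategory every object is a finite direct sum of the $L_i$'s (since the $L_i$ are indecomposable with endomorphism fields, and by the Hom-computation above there are no "new" indecomposables appearing as summands of sums of $L_i$'s — any idempotent endomorphism of $\bigoplus L_i$, written as a matrix with entries in one-dimensional or zero Hom-spaces, can be conjugated to a diagonal $0/1$ matrix). In particular $N_2$, being a direct summand of $M=\bigoplus L_i$, is isomorphic to $\bigoplus_{i\in S}L_i$ for some subset $S$. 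Then, counting multiplicities: for each isomorphism class $[L]$ among the $L_i$, the multiplicity of $[L]$ in $M$ equals (multiplicity in $N_1$) $+$ (multiplicity in $N_2$) — this uses $\dim_\Lambda\Hom(L,-)$ as an additive invariant — and since $N_1\simeq\bigoplus_{i\le r}L_i$, comparing with $M\simeq\bigoplus_{i\le n}L_i$ forces the multiplicity of $[L]$ in $N_2$ to equal that in $\bigoplus_{i>r}L_i$ for every class. Hence $N_2\simeq\bigoplus_{i>r}L_i$.

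The main obstacle is the first reduction: establishing that \emph{every} direct summand of a finite direct sum of invertible objects is again a direct sum of (some of) those invertibles. This is where one must actually diagonalize an idempotent matrix over the "generalized division ring" structure of the $\Hom$-spaces, and it requires a small lifting-of-idempotents argument rather than a one-line citation. Everything after that — the multiplicity count and the subset bookkeeping — is routine linear algebra over $\Lambda$ using that $\dim_\Lambda\Hom_\sC(L_i,-)$ is additive in direct sums and equals the multiplicity of $[L_i]$.
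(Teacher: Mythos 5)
Your argument rests on the claim that for invertible $L,L'$ the space $\Hom_\sC(L,L')$ is either zero or one-dimensional with every nonzero element an isomorphism, and both of your routes lean on it (to diagonalize idempotent matrices over these Hom-spaces, and to read off multiplicities from $\dim_\Lambda\Hom(L,-)$). That claim is false in the generality in which the proposition is stated and used. The reduction to $\Hom(\1,P)$ with $P=L^\vee\otimes L'$ invertible is fine, but nothing forces this space to vanish when $P\not\simeq\1$: a nonzero map $\1\to P$ only dualizes to a nonzero map $P^\vee\to\1$, not to a candidate inverse $P\to\1$. Concretely, in $\DM(\k,\Lambda)$ one has $\Hom(\Lambda,\Lambda(1)[1])\simeq\k^*\otimes_\Z\Lambda$, nonzero and far from one-dimensional for $\k$ infinite, although $\Lambda(1)[1]$ is invertible and not isomorphic to $\Lambda$; and already in $\Chow(\C,\Q(\zeta_d))$ one has $\Hom(\Lambda,h(F_d^{(2)})^{\bchi})=e^{\bchi}\cdot\CH_0(F_d^{(2)})_\Lambda\supseteq e^{\bchi}\bigl(\mathrm{Jac}(F_d^{(2)})(\C)\otimes\Lambda\bigr)\neq 0$. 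Since the proposition is invoked over arbitrary perfect base fields (Proposition \ref{f-mot}) and the surrounding discussion is applied to $\DM$ as well, this is not a removable edge case.

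What is true, and all that is needed, is that $\End(L_i)\simeq\Lambda$ is a local ring (indeed a field); no control of $\Hom(L_i,L_j)$ for $i\neq j$ is required. With that hypothesis the statement is exactly the Krull--Remak--Schmidt--Azumaya exchange theorem in a pseudo-abelian additive category, and the paper's proof is the one-line citation of precisely this result (Bass, Algebraic $K$-theory, Ch.~I, Theorem 3.6). Your stated reason for avoiding such a citation --- that Krull--Schmidt ``need not hold since $\sC$ need not have all endomorphism rings local'' --- misreads the hypotheses: the theorem only asks for local endomorphism rings of the \emph{given} summands $L_i$, which invertibility supplies. If you want a self-contained argument, run the standard exchange: decompose $\id_{L_1}$ as the sum of the two composites $L_1\to M\to N_j\to M\to L_1$ for $j=1,2$; locality of $\End(L_1)$ makes one of the two summands invertible, so $L_1$ splits off from $N_1$ or from $N_2$, and one concludes by peeling off summands inductively. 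The final multiplicity count must likewise be justified by the uniqueness part of Azumaya's theorem rather than by the additivity of $\dim_\Lambda\Hom(L,-)$, which does not compute multiplicities here.
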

\begin{proof}
This follows from \cite[Chapter 1, Theorem 3.6]{Bass}.
\end{proof}

Artin motives provide basic examples of invertible motives,
as seen in the following lemma.
We shall see more examples in Propositions \ref{as-dual} and \ref {f-mot} below.
 
\begin{lem}\label{artin-motive}
Let $K/\k$ be a finite Galois extension and put $X=\Spec K$, $G=\Gal(X/\Spec \k)$.
For any $\chi_1$, $\chi_2 \in \wh G$, there is an isomorphism
\[h(X)^{\chi_1\chi_2}\simeq h(X)^{\chi_1} \ot h(X)^{\chi_2}\] 
 in $\Chow(\k,\L)$, 
where $\L$ is large enough to contain the values of $\chi_i$'s. 
In particular, $h(X)^\chi$ is invertible for any $\chi \in \wh G$. 
\end{lem}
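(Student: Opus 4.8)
The plan is to make the decomposition of $X\times_\k X$ into graphs explicit and then compute the relevant idempotent. Since $K/\k$ is Galois, the $\k$-algebra map $K\otimes_\k K\to\prod_{g\in G}K$, $a\otimes b\mapsto (a\,g(b))_g$, is an isomorphism; geometrically this identifies $X\times X$ with $\coprod_{g\in G}\Gamma_g$, where $\Gamma_g\subset X\times X$ is the graph of $g\colon X\to X$ and the first projection restricts to an isomorphism $\Gamma_g\xrightarrow{\sim}X$. A direct check (using $p_1\circ(\sigma_X\times\tau_X)=\sigma_X\circ p_1$ and the analogous identity for $p_2$) shows that $(a,b)\in G\times G$ carries $\Gamma_g$ onto $\Gamma_{bga^{-1}}$, and that under the identifications $\Gamma_g\cong X\cong\Gamma_{bga^{-1}}$ given by $p_1$ it becomes the automorphism $a\colon X\to X$. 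Hence, writing $h(X)\otimes h(X)=h(X\times X)=\bigoplus_{g\in G}N_g$ with $N_g:=h(\Gamma_g)\cong h(X)$, the correspondence $(a,b)_*$ carries a tuple $(v_g)_g$ (with $v_g\in h(X)$) to the tuple whose $h$-component is $a_*\,v_{b^{-1}ha}$. Note also that $e^\chi=\tfrac1{|G|}\sum_g\overline\chi(g)\,g_*$ is a genuine idempotent in $\End(h(X))$ as soon as $\L$ contains the values of $\chi$, so all the objects $h(X)^\chi$, $h(X)^{\chi_1}$, $h(X)^{\chi_2}$, $h(X)^{\chi_1\chi_2}$ are defined.

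By construction---$e^{(\chi_1,\chi_2)}=e^{\chi_1}\otimes e^{\chi_2}$, and the tensor product of Chow motives is given by products of varieties and of correspondences---the object $h(X)^{\chi_1}\otimes h(X)^{\chi_2}$ is the image of $e^{(\chi_1,\chi_2)}=\tfrac1{|G|^2}\sum_{a,b}\overline{\chi_1}(a)\overline{\chi_2}(b)\,(a,b)_*$ acting on $h(X\times X)$. The heart of the proof is a one-line computation with the formula above: substituting $g=b^{-1}ha$ and using multiplicativity of the characters, one finds that the $h$-component of $e^{(\chi_1,\chi_2)}(v_g)_g$ equals $\overline{\chi_2}(h)\cdot e^{\chi_1\chi_2}\bigl(\tfrac1{|G|}\sum_g\chi_2(g)\,v_g\bigr)$. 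Consequently every element of $h(X\times X)^{(\chi_1,\chi_2)}$ has the shape $(\overline{\chi_2}(h)\,w)_h$ with $w$ its component in $N_e\cong h(X)$ (which automatically lies in $h(X)^{\chi_1\chi_2}$), and conversely $w\mapsto(\overline{\chi_2}(h)\,w)_h$ carries $h(X)^{\chi_1\chi_2}$ into $h(X\times X)^{(\chi_1,\chi_2)}$. These two assignments, realized by explicit correspondences, are mutually inverse, giving the desired isomorphism $h(X)^{\chi_1}\otimes h(X)^{\chi_2}\simeq h(X)^{\chi_1\chi_2}$.

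For the last assertion, take $\chi_1=\chi$ and $\chi_2=\chi^{-1}$: this gives $h(X)^\chi\otimes h(X)^{\chi^{-1}}\simeq h(X)^{\chi\chi^{-1}}$, and the latter is isomorphic to $h(\Spec\k)=\L$, the unit object---e.g. by Proposition \ref{localization}(ii) applied to the structure morphism $X\to\Spec\k$ (finite, generically Galois with group $G$, on which the trivial character restricts trivially), or directly by transfer. Since $\Chow(\k,\L)$ is a rigid tensor category, any object $L$ admitting an object $L'$ with $L\otimes L'\simeq\L$ is invertible; hence $h(X)^\chi$ is invertible.

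The only point requiring care, I expect, is bookkeeping with conventions: one must pin down---consistently with the paper's conventions for the action of automorphisms on Chow motives through their graphs---that $(a,b)$ permutes the components by $g\mapsto bga^{-1}$ and acts on fibres by $a_*$, since a slip here would replace $h(X)^{\chi_1\chi_2}$ by $h(X)^{\chi_1\overline{\chi_2}}$ or the like. This is ultimately harmless, because the character values lie in $\C$, so $\chi_1\chi_2=\chi_2\chi_1$ and the statement is symmetric and thus forced; everything else is the elementary computation above.
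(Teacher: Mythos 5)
Your proof is correct, and it reaches the same two mutually inverse morphisms as the paper (up to normalization, your maps $u\mapsto(\ol{\chi_2}(h)u)_h$ and $(v_h)_h\mapsto v_e$ are the paper's $(e^{\chi_1}\times e^{\chi_2})\circ\D_*\circ e^{\chi_1\chi_2}$ and $e^{\chi_1\chi_2}\circ\D^*\circ(e^{\chi_1}\times e^{\chi_2})$), but the verification is organized differently. The paper never splits $X\times X$: it checks the two composite identities directly by manipulating $0$-cycles on $X^3$ and $X^4$ (counting how the cycles $\G(g_1,g_2,g,h_1,h_2)$ assemble into graphs). You instead use $K\ot_\k K\simeq\prod_{g\in G}K$ to write $h(X\times X)=\bigoplus_g N_g$ with $N_g\simeq h(X)$, express $(a,b)_*$ as an explicit matrix, and diagonalize the idempotent $e^{(\chi_1,\chi_2)}$; this buys a transparent identification of the image of the projector at the cost of some component bookkeeping. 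Two small points of hygiene. First, the "elements $(v_g)_g$" language should be read as a computation in the ring $\End(h(X\times X))=\bigoplus_{g,h}\Hom(N_g,N_h)\simeq M_{|G|}(\L[G])$ (or via generalized elements $\Hom(L,-)$); as stated it is an abuse, though a standard and harmless one. Second, your closing symmetry argument does not by itself force the exponent: symmetry in $\chi_1,\chi_2$ rules out $\chi_1\ol{\chi_2}$ but not $\ol{\chi_1\chi_2}$, and what actually pins it down is either the computation carried out with one consistent convention (which you did, and which is correct) or the additional specialization $\chi_2=1$, where the answer must be $h(X)^{\chi_1}$. Neither point affects the validity of the proof.
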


\begin{proof}Let $\D \colon X \to X \times X$ be the diagonal and consider the morphisms 
\[\D_*\colon h(X) \to h(X\times X), \quad \D^*\colon h(X\times X)\to h(X).\]
Put $d:=|G|$. We shall show that
\begin{align}
\label{eq:isom-artin}
\begin{split}
&d \cdot e^{\chi_1\chi_2} \circ \D^* \circ (e^{\chi_1}\times e^{\chi_2}) 
: h(X)^{\chi_1} \otimes h(X)^{\chi_2} \to h(X)^{\chi_1 \chi_2},
\\
& (e^{\chi_1}\times e^{\chi_2})\circ \D_* \circ e^{\chi_1\chi_2} 
: h(X)^{\chi_1 \chi_2} \to h(X)^{\chi_1} \otimes h(X)^{\chi_2}
\end{split}
\end{align}
are isomorphisms mutually inverse to each other. 
Since 
\[\D^*\circ (g_1,g_2)_* \circ \D_* \circ g_* =((g_1^{-1},g_2^{-1})\circ \D)^* \circ (\D\circ g)_*=
\begin{cases} (g_1g)_* & (g_1=g_2), \\ 0 & (g_1 \ne g_2), \end{cases}\]
for any $g, g_1, g_2 \in G$, we have
\begin{align*}
&\D^* \circ (e^{\chi_1}\times e^{\chi_2}) \circ \D_*\circ e^{\chi_1\chi_2}=
\frac{1}{d^3} \sum_{g,g_1=g_2 \in G} \ol{\chi_1\chi_2}(g)\ol\chi_1(g_1)\ol\chi_2(g_2)(g_1g)_*
\\&=\frac{1}{d^2}\sum_{g'\in G} \ol{\chi_1\chi_2}(g') g'_* =\frac{1}{d} e^{\chi_1\chi_2}. 
\end{align*}
Hence $e^{\chi_1\chi_2} \circ \D^* \circ (e^{\chi_1}\times e^{\chi_2}) \circ \D_*\circ e^{\chi_1\chi_2}
=d^{-1} e^{\chi_1\chi_2}$.

On the other hand, for any $g_1, g_2, g, h_1, h_2 \in G$, the $0$-cycle on $X^4$ 
\begin{align*}
\G(g_1,g_2,g,h_1,h_2):=&(h_1,h_2)_*\circ \D_* \circ g_* \circ \D^*\circ (g_1,g_2)_* 
\\=& ((h_1,h_2)\circ \D\circ g)_*\circ ((g_1^{-1},g_2^{-1})\circ \D)^*\end{align*}
is the image of $X \to X^4$; $x \mapsto (g_1^{-1}x,g_2^{-1}x, h_1gx,h_2gx)$, and we have 
\begin{align*}
& (e^{\chi_1}\times e^{\chi_2}) \circ \D_*\circ e^{\chi_1\chi_2} \circ \D^* \circ (e^{\chi_1}\times e^{\chi_2})
\\&= \frac{1}{d^5} \sum_{g_1,g_2,g,h_1,h_2\in G} \ol\chi_1(h_1gg_1)\ol\chi_2(h_2gg_2)\G(g_1,g_2,g,h_1,h_2). 
\end{align*}
Note that $\G(g_1,g_2,g,h_1,h_2)$ is contained in the graph $(h_1gg_1,h_2gg_2)_*$.
When $g_1$, $g_2$, $g$, $h_1$, $h_2$ range over $G$ with fixed values  $g_1'=h_1gg_1$ and $g_2'=h_2gg_2$, 
the cycles $\G(g_1,g_2,g,h_1,h_2)$ sum up to $d^2 (g_1',g_2')_*$.
Hence we obtain 
\[(e^{\chi_1}\times e^{\chi_2}) \circ \D_*\circ e^{\chi_1\chi_2} \circ \D^* \circ (e^{\chi_1}\times e^{\chi_2})
=d^{-1}(e^{\chi_1}\times e^{\chi_2}).\] 
This proves that \eqref{eq:isom-artin} are isomorphisms.

The last statement follows by letting $\chi_1=\chi$ and $\chi_2=\ol\chi$, since $h(X)^1\simeq \L$ by Proposition \ref{localization} (ii). 
\end{proof}

Let $d$ be a positive integer such that $\mu_d \subset \k$ and assume that $\Q(\zeta_d) \subset \L$. 
For $c \in \k^*$ and $\chi \in\wh\m_d$, let 
\[\mathrm{Km}(c)=\mathrm{Km}_d(c) \colon \Gal(\k(c^{1/d})/\k) \to \m_d; \quad g\mapsto g(c^{1/d})/c^{1/d}\] 
be the Kummer character associated with $c$, and define
\begin{equation}\label{eq:def-Lchi}
\L\langle c \rangle^\chi =h(\Spec \k(c^{1/d}))^{\chi\circ \mathrm{Km}(c)}, 
\end{equation}
an invertible object in $\Chow(\k,\L)$. 

\begin{lem}\label{lem:L-chi}\ 
\begin{enumerate}
\item For $c_1, c_2 \in \k^*$ and $\chi \in \wh\mu_d$,
we have $\Lambda\langle c_1c_2 \rangle^{\chi}\simeq \L\langle c_1 \rangle^{\chi} \otimes \L\langle c_2 \rangle^{\chi}$.
\item For $c \in \k^*$ and $\chi_1, \chi_2 \in \wh\mu_d$,
we have $\Lambda\langle c \rangle^{\chi_1 \chi_2}\simeq \L\langle c \rangle^{\chi_1} \otimes \L\langle c \rangle^{\chi_2}$.
\item
If $\chi$ factors through $\chi' \in \wh\m_{d'}$ for some $d' \mid d$, 
i.e. $\chi(m)=\chi'(m^{d/d'})$ for any $m \in \mu_d$, 
then we have $\L\langle c \rangle^\chi \simeq \L\langle c \rangle^{\chi'}$.
\end{enumerate}
\end{lem}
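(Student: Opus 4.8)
The plan is to deduce all three isomorphisms from the Artin-motive computation of Lemma~\ref{artin-motive}, by realizing the relevant objects $\L\langle c\rangle^{\chi}$ as isotypic parts of the Artin motive of one sufficiently large field. The key preliminary step is the following. Let $K/\k$ be any finite Galois extension containing $\k(c^{1/d})$, put $G:=\Gal(K/\k)$, and for $\chi\in\wh\m_d$ let $\chi_K(c)\in\wh G$ be the character $g\mapsto\chi\bigl(g(c^{1/d})/c^{1/d}\bigr)$. This is well defined: the ratio lies in $\m_d$ (since $(g(c^{1/d}))^d=g(c)=c$) and is independent of the chosen $d$th root because $\m_d\subset\k$; moreover $\chi_K(c)$ is trivial on $\Gal(K/\k(c^{1/d}))=\ker\bigl(\mathrm{res}\colon G\twoheadrightarrow\Gal(\k(c^{1/d})/\k)\bigr)$, and $\L$ contains its values. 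The natural morphism $\Spec K\to\Spec\k(c^{1/d})$ is finite generically Galois with group $\Gal(K/\k(c^{1/d}))\subset G$, and is $G$-equivariant once $G$ is made to act on $\Spec\k(c^{1/d})$ through $\mathrm{res}$; so Proposition~\ref{localization}(ii) applies and gives $h(\Spec K)^{\chi_K(c)}\simeq h(\Spec\k(c^{1/d}))^{\chi_K(c)}$. On the right the $G$-action factors through $\mathrm{res}$, and averaging the defining formula for $e^{\chi_K(c)}$ over the fibers of $\mathrm{res}$ identifies it with $e^{\chi\circ\mathrm{Km}(c)}$; hence $h(\Spec K)^{\chi_K(c)}\simeq\L\langle c\rangle^{\chi}$.

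Granting this, the three parts follow quickly. Part (ii) is the case $K=\k(c^{1/d})$: since $(\chi_1\chi_2)\circ\mathrm{Km}(c)=(\chi_1\circ\mathrm{Km}(c))(\chi_2\circ\mathrm{Km}(c))$ in $\wh G$, Lemma~\ref{artin-motive} gives $\L\langle c\rangle^{\chi_1\chi_2}\simeq\L\langle c\rangle^{\chi_1}\ot\L\langle c\rangle^{\chi_2}$. For part (i), fix the $d$th roots so that $(c_1c_2)^{1/d}=c_1^{1/d}c_2^{1/d}$ and take $K=\k(c_1^{1/d},c_2^{1/d})$, which is Galois over $\k$ and contains $\k(c_1^{1/d})$, $\k(c_2^{1/d})$ and $\k((c_1c_2)^{1/d})$; the preliminary gives $\L\langle c_i\rangle^{\chi}\simeq h(\Spec K)^{\chi_K(c_i)}$ for $i=1,2$ and $\L\langle c_1c_2\rangle^{\chi}\simeq h(\Spec K)^{\chi_K(c_1c_2)}$, while $g(c_1^{1/d}c_2^{1/d})=g(c_1^{1/d})g(c_2^{1/d})$ gives $\chi_K(c_1c_2)=\chi_K(c_1)\chi_K(c_2)$, so Lemma~\ref{artin-motive} concludes. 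For part (iii), fix the roots so that $(c^{1/d})^{d/d'}=c^{1/d'}$ and apply the preliminary with $d'$ and $\chi'$ in place of $d$ and $\chi$, and again $K=\k(c^{1/d})$: this identifies $\L\langle c\rangle^{\chi'}$ with $h(\Spec\k(c^{1/d}))^{\eta}$, where $\eta(g)=\chi'\bigl((g(c^{1/d})/c^{1/d})^{d/d'}\bigr)$; by the hypothesis relating $\chi$ and $\chi'$ this equals $\chi\bigl(g(c^{1/d})/c^{1/d}\bigr)$, i.e. $\eta=\chi\circ\mathrm{Km}(c)$, so $\L\langle c\rangle^{\chi'}\simeq\L\langle c\rangle^{\chi}$.

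All of the content is in the preliminary step; the rest is formal, using only the multiplicativity relations $\chi_K(c_1c_2)=\chi_K(c_1)\chi_K(c_2)$ and $(\chi\chi')_K(c)=\chi_K(c)\chi'_K(c)$. Inside the preliminary, the points requiring care are: confirming that the field extensions in play are actually Galois, so that the hypothesis ``finite generically Galois'' of Proposition~\ref{localization}(ii) is met; checking that the character passed to that proposition restricts trivially to $\Gal(K/\k(c^{1/d}))$; and the identification, after transport along $\mathrm{res}$, of the resulting projector on $h(\Spec\k(c^{1/d}))$ with $e^{\chi\circ\mathrm{Km}(c)}$. None of these is hard; the compatible normalizations of the $d$th roots in parts (i) and (iii) are precisely what make the characters $\chi_K(\cdot)$ multiply correctly.
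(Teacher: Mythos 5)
Your proposal is correct and takes essentially the same route as the paper, whose proof of this lemma is precisely the one-line appeal to Proposition \ref{localization} (ii) and Lemma \ref{artin-motive} that you have fleshed out: inflating the Kummer characters to a common Galois extension via Proposition \ref{localization} (ii) and then applying the tensor decomposition of Lemma \ref{artin-motive}. The details you supply (well-definedness and multiplicativity of $\chi_K(c)$, the compatible normalizations of roots, and the identification of projectors under inflation) are all accurate.
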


\begin{proof}
These follow from Proposition \ref{localization} (ii) and Lemma \ref{artin-motive}. 
\end{proof}

\section{Artin-Schreier and Fermat motives}\label{sect:AS-F}

\subsection{Artin-Schreier motives} \label{s4.1}
In this subsection 
let $\k$ be a finite field with $q$ elements
of characteristic $p$,
and let $d$ be a positive divisor of $q-1$.
Let $A_d^\0$ be the affine {\em Artin-Schreier curve} over $\k$ defined by 
\begin{equation}\label{eq:def-AS}
x^q-x=y^d.
\end{equation}
It admits an action of $\k \times \m_d$ given by $(a,m).(x,y)=(x+a, my)$. 
There is no fixed point of $(a, m) \in \kappa \times \mu_d$ if $a \not= 0$,
while $\m_d$ fixes the points in
$A_d^\0(\k)=\{(x,0) \mid x \in \k\}$.
The projectivization $X^q-XZ^{q-1}=Y^dZ^{q-d}$ is non-singular at the unique point at infinity $[0:1:0]$ if and only if $d=q-1$. 
Let $A_d$ be the projective smooth curve obtained by normalizing the singularity. It has a unique point at infinity, written as $\infty$, 
which we take as the distinguished point.
The action of $\k \times \m_d$ extends to $A_d$
and fixes $\infty$. 

If $d' \mid d$, we have a finite surjective morphism 
\begin{equation}\label{eq:Ad-Ad'}
A_d \to A_{d'}, \qquad (x, y) \mapsto (x, y^{d/d'})
\end{equation}
of degree $d/d'$, compatible with the group actions via the homomorphism 
$\k\times \m_d \to \k \times \m_{d'}, \ (a, m) \mapsto (a, m^{d/d'})$. 
It is generically Galois with the Galois group $\Ker(\m_d \twoheadrightarrow \m_{d'})=\mu_{d/d'}$. 
The genus of $A_d$ is $(q-1)(d-1)/2$, which can be seen by the Riemann-Hurwitz formula for the covering $A_d \to A_1 \simeq \P^1$. 

Let $\L$ be a field containing $\Q(\zeta_{pd})$. 
We have the decomposition in $\Chow(\k,\L)$ 
\[h(A_d) = \bigoplus_{(\psi, \chi) \in \wh\k \times \wh\m_d} h(A_d)^{(\psi,\chi)}, \quad 
h(A_d)^{(\psi,\chi)}:=(A_d,e^{(\psi,\chi)}). \]
Here, $e^{(\psi,\chi)}$ means the algebraic correspondence induced by 
the group-ring element defined in \eqref{eq:def-e-chi}. 
Define a projector $e_\prim \in \Q[\k\times\m_d]$ (with coefficients in $\Q$) by
\begin{equation}
\label{eq:def-e-prim}
e_\prim = \sum_{(\psi,\chi)\in \wh\k\times\wh\m_d, \psi \ne 1, \chi\ne 1} e^{(\psi,\chi)}
=(1-e_\k^1)(1-e_{\m_d}^1).
\end{equation}
Note that, for any projectors $e,f \in\Q[G]$ where $G$ is an abelian group, $1-e$ and $ef$ are also projectors.  
Put $h(A_d)_\prim=(A_d,e_\prim)=\bigoplus_{\psi\ne 1, \chi\ne 1} h(A_d)^{(\psi,\chi)}$. 

\begin{ppn}\label{as-deg}
Suppose that $d' \mid d$ and that $\chi \in \wh\m_d$ factors through $\chi'\in\wh\m_{d'}$ 
(i.e. $\chi(m)=\chi'(m^{d/d'})$). 
Then we have $h(A_d)^{(\psi,\chi)} \simeq h(A_{d'})^{(\psi,\chi')}$.
\end{ppn}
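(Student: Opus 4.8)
The plan is to realize the isomorphism $h(A_d)^{(\psi,\chi)} \simeq h(A_{d'})^{(\psi,\chi')}$ as induced by the finite surjective morphism $f \colon A_d \to A_{d'}$, $(x,y) \mapsto (x,y^{d/d'})$ of \eqref{eq:Ad-Ad'}, using Proposition \ref{localization} (ii). First I would observe that $f$ is $(\k \times \m_d)$-equivariant via the surjection $g \colon \k \times \m_d \twoheadrightarrow \k \times \m_{d'}$, $(a,m) \mapsto (a, m^{d/d'})$; on an appropriate open subset $f$ is finite Galois with group $\Gal = \Ker(\m_d \twoheadrightarrow \m_{d'}) = \mu_{d/d'}$ (viewed inside $\k \times \m_d$ as $\{0\} \times \mu_{d/d'}$). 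The hypothesis that $\chi$ factors through $\chi'$ means precisely that the character $(\psi,\chi)$ of $\k \times \m_d$ is trivial on $\{0\} \times \mu_{d/d'}$, i.e.\ $(\psi,\chi)|_{\Gal(A_d/A_{d'})} = 1$, which is exactly the condition needed to apply Proposition \ref{localization} (ii).

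There is a technical wrinkle: $f$ in \eqref{eq:Ad-Ad'} is only \emph{generically} Galois (it may be branched over the point at infinity and possibly over the locus $y=0$), whereas Proposition \ref{localization} is stated for $G$-equivariant morphisms of smooth varieties, with case (ii) a finite generically Galois morphism — which is fine — but one must be slightly careful that the projective curves $A_d$, $A_{d'}$ are what we feed in. So the concrete steps are: (1) identify the equivariant structure and the Galois group as above; (2) check that the character $(\psi,\chi)$ restricts trivially to $\Gal(A_d/A_{d'})$, using the factorization hypothesis on $\chi$ — note $\psi$ is unaffected since $\Gal(A_d/A_{d'})$ lies in the $\m_d$-factor; (3) apply Proposition \ref{localization} (ii) to the $(\k \times \m_d)$-equivariant $f$ (with $G = \k \times \m_d$ acting on $A_{d'}$ through $g$) to obtain $h(A_d)^{(\psi,\chi)} \simeq h(A_{d'})^{(\psi,\chi)}$ in $\Chow(\k, \L)$, where on the right $h(A_{d'})^{(\psi,\chi)}$ denotes the image of $e^{(\psi,\chi)} \in \L[\k \times \m_d]$ acting on $h(A_{d'})$ through $g$; (4) identify this with $h(A_{d'})^{(\psi,\chi')}$ by noting that the projector $e^{(\psi,\chi)}$ pushed forward along $g \colon \k \times \m_d \to \k \times \m_{d'}$ is exactly $e^{(\psi,\chi')}$ (this is the standard compatibility of $e^\chi$ with surjections of abelian groups, using that $\chi = \chi' \circ g$ on the relevant factor), e.g.\ via Example (ii) after the generically-finite lemma.

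The main obstacle I expect is step (3)/(4) bookkeeping: making sure the $G$-action of $\k \times \m_d$ on $A_{d'}$ through $g$ is set up so that Proposition \ref{localization} (ii) literally applies (its hypothesis $\Gal(U/V) \subset G$ with $\chi|_{\Gal(U/V)} = 1$ must be matched with $\Gal(A_d/A_{d'}) = \mu_{d/d'} \hookrightarrow \k \times \m_d$), and then correctly identifying the resulting direct factor of $h(A_{d'})$ with the intrinsically-defined $h(A_{d'})^{(\psi,\chi')}$ rather than something cut out by a pulled-back projector. This is purely formal — it comes down to the identity $e^{(\psi,\chi)}_{\k \times \m_d}$ and $e^{(\psi,\chi')}_{\k \times \m_{d'}}$ having the same image after the equivalence induced by $f_\#$ — but it is the only place where care is genuinely required; everything else is a direct invocation of Proposition \ref{localization}.
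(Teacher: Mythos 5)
Your proposal is correct and follows exactly the paper's own (one-line) proof: apply Proposition \ref{localization} (ii) to the generically Galois covering \eqref{eq:Ad-Ad'}, noting that the factorization hypothesis on $\chi$ is precisely the triviality of $(\psi,\chi)$ on $\Gal(A_d/A_{d'})=\mu_{d/d'}$. The bookkeeping you flag (identifying the pulled-back projector $e^{(\psi,\chi)}$ with $e^{(\psi,\chi')}$ on $h(A_{d'})$) is handled correctly and is left implicit in the paper.
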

\begin{proof}
This follows from Proposition \ref{localization} (ii) applied to \eqref{eq:Ad-Ad'}.
\end{proof}

\begin{ppn}\label{as-mot}\ 
Let $\psi \in \wh\k$ and $\chi \in \wh\mu_d$.
We use the notations from \eqref{eq:def-h0} and \eqref{eq:def-h1}.
\begin{enumerate}
\item 
If $\psi=1$ and $\chi=1$, then
$h(A_d)^{(1,1)}=h_0(A_d)\oplus h_2(A_d) = \L\oplus \L(1)$. 
\item 
If $\psi\ne1$ or $\chi \ne 1$, then $h(A_d)^{(\psi,\chi)}=h_1(A_d)^{(\psi,\chi)}$. 
\item If only one of $\psi$, $\chi$ is trivial, then $h(A_d)^{(\psi,\chi)}=0$. 
\end{enumerate}
\end{ppn}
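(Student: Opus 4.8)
The plan is to analyze the $\k\times\mu_d$-action on $h(A_d)$ by combining the structure of fixed points with the localization results of Proposition \ref{localization}. The key geometric observation is that the affine curve $A_d^\0$ carries an action of $\k\times\mu_d$ that is \emph{free} on the locus where the $\k$-coordinate shift is nontrivial, while the whole group fixes the point $\infty$ and $\mu_d$ fixes the $q$ rational points $A_d^\0(\k)=\{(x,0)\}$. So the three cases correspond to whether $\psi$ and $\chi$ detect these fixed loci.

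For (i), when $\psi=1$ and $\chi=1$, the projector $e^{(1,1)}$ is the averaging operator over the whole group; since $A_d/(\k\times\mu_d)\simeq \P^1$, I would argue $h(A_d)^{(1,1)}\simeq h(\P^1)=\L\oplus\L(1)$ using the generically Galois quotient map $A_d\to A_1\simeq\P^1$ and Proposition \ref{localization} (ii) (the full group $\k\times\mu_d$ is the generic Galois group, and $\chi=1$ restricts trivially). Then I identify the two summands as $h_0(A_d)\oplus h_2(A_d)=\L\oplus\L(1)$ using \eqref{eq:def-h0}, noting $\infty$ is fixed so the relevant $0$-cycles are $(\k\times\mu_d)$-invariant. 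For (ii), when $(\psi,\chi)\ne(1,1)$, I need that $e^{(\psi,\chi)}$ kills the pieces $h_0(A_d)$ and $h_2(A_d)$ of $h(A_d)=h_0(A_d)\oplus h_1(A_d)\oplus h_2(A_d)$: since $[A_d\times\infty]$ and $[\infty\times A_d]$ are fixed by the group action (as $\infty$ is a fixed point), the group acts trivially on $h_0$ and $h_2$, so only the $\chi=\psi=1$ component survives there. Hence $e^{(\psi,\chi)}$ factors through $h_1(A_d)$, giving $h(A_d)^{(\psi,\chi)}=h_1(A_d)^{(\psi,\chi)}$.

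For (iii), suppose $\psi=1$ but $\chi\ne1$ (the case $\psi\ne1$, $\chi=1$ is symmetric and actually easier). I would factor the quotient through $A_d\to A_d/\k$; concretely, the morphism \eqref{eq:Ad-Ad'} with $d'=d$ is not what I want, but rather I use that $A_d/\k$-action: since $\psi=1$, the projector $e^{(1,\chi)}$ only sees the $\mu_d$-action on the quotient curve $\bar A := A_d/\k$. The key point is that $\bar A\simeq\P^1$ with $\mu_d$ acting, but actually the composite $A_d\to A_d/\k \to A_d/(\k\times\mu_d)\simeq\P^1$ shows $\bar A/\mu_d\simeq\P^1$; I claim $\bar A$ itself is $\P^1$ and the $\mu_d$-action on its $h_1=0$ forces $h(A_d)^{(1,\chi)}=h(\bar A)^\chi$ which vanishes for $\chi\ne1$ because $h(\P^1)$ has no part on which $\mu_d$ acts nontrivially. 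Alternatively, and more robustly, I would use the affine curve: $A_d^\0/\k\simeq \A^1$ via $y$, so $M(A_d^\0)^{(1,\chi)}\simeq M(\A^1)^\chi$; for $\chi\ne1$ this is $0$ since $\mu_d$ acts linearly on $\A^1$ fixing $0$ and $M(\A^1)\simeq\L$. Then Proposition \ref{localization} (i) applied to the open immersion $A_d^\0\inj A_d$ (the complement is the single fixed point $\infty$, with $M(\infty)^{(1,\chi)}=h(\Spec\k)^\chi=0$ for $\chi\ne1$) transfers this to $h(A_d)^{(1,\chi)}=0$.

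The main obstacle I anticipate is the bookkeeping around the point at infinity and the normalization: one must be careful that $\infty$ is genuinely a single fixed $\k$-rational point on the smooth model $A_d$ (not a finite set permuted by the group), and that the open immersion $A_d^\0\inj A_d$ has complement exactly $\{\infty\}$ so that Proposition \ref{localization} (i) applies cleanly. A secondary subtlety is verifying the compatibility of the chosen distinguished point $\infty$ with the decompositions in \eqref{eq:def-h0}–\eqref{eq:def-h1}, i.e. that $h_0, h_2$ really are the $(1,1)$-isotypic pieces and $h_1$ carries everything else — this needs the observation that the class of $\infty$ in $\CH_0(A_d)$ is group-invariant, which is immediate since $\infty$ is a fixed point.
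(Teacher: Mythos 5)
Your proposal is correct and follows essentially the same route as the paper: (i) via Proposition \ref{localization} (ii) applied to $A_d \to A_d/(\k\times\m_d)=\P^1$, (ii) by observing that the projectors $[A_d\times\infty]$, $[\infty\times A_d]$ are group-invariant so $h_0,h_2$ live in the $(1,1)$-part, and (iii) via the quotients $A_d\to A_d/\k$ and $A_d\to A_d/\m_d$, both isomorphic to $\P^1$. Your alternative for (iii) through the affine model and $\DM$-localization also works but is not needed.
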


\begin{proof}
Proposition \ref{localization} (ii)
applied to $A_d \to A_d/(\k\times\m_d) = \P^1$
yields $h(A_d)^{(1, 1)}=h(\P^1)=\Lambda \oplus \Lambda(1)$,
showing (i).
This also implies $h_i(A_d)=h_i(A_d)^{(1, 1)}$ ($i=0,2$), from which we obtain (ii).
To see (iii), it suffices to apply Proposition \ref{localization} (ii)
to 
$A_d \to A_d/\m_d = \P^1$ (resp. $A_d \to A_d/\k = \P^1$)
when $\chi=1$ (resp. $\psi=1$).
\end{proof}

\begin{lem}\label{intersection}
Let $(\ , \ )\colon \CH_1(A_{q-1} \times A_{q-1})\times \CH_1(A_{q-1} \times A_{q-1}) \to \Z$ be the intersection number pairing. 
For $(a,m)\in \k\times \k^*$, let $\G_{(a,m)} \in \CH_1(A_{q-1} \times A_{q-1})$ be the class of its graph and $\D=\G_{(0,1)}$. Then, 
\[(\D,\G_{(a,m)})=\begin{cases}
3q-q^2 & ((a,m)=(0,1)), \\
q+1 & (a=0, m\ne 1), \\
q & (a \ne 0, m=1), \\
1 & (a \ne 0, m\ne 1). 
\end{cases}\]
\end{lem}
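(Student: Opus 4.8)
The plan is to compute each intersection number $(\Delta, \Gamma_{(a,m)})$ by combining the standard self-intersection formula for the diagonal with a pointwise count of the geometric intersection $\Delta \cap \Gamma_{(a,m)}$ on the smooth surface $A_{q-1} \times A_{q-1}$. For $(a,m) \neq (0,1)$, since the automorphism $(a,m)$ has no fixed points on the affine curve $A_{q-1}^\circ$ (no fixed point when $a \neq 0$; when $a = 0$, $m \neq 1$ the only affine fixed points would lie in $\{(x,0)\}$, but $\mu_{q-1}$ acts freely on $\k^*$), the intersection $\Delta \cap \Gamma_{(a,m)}$ is supported at the points lying over $\infty$. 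So the main task is a local analysis at $\infty$, counting intersection multiplicities there; for $(a,m) = (0,1)$ one instead invokes $(\Delta,\Delta) = \chi_{\mathrm{top}}(A_{q-1}) = 2 - 2g$ with $g = (q-1)(d-1)/2$ specialized to $d = q-1$, i.e. $g = (q-1)(q-2)/2$, giving $(\Delta,\Delta) = 2 - (q-1)(q-2) = 3q - q^2$, matching the claim.

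**Key steps.** First I would set up the normalization $A_{q-1} \to \P^1$ near $\infty$: since $d = q-1$, the projective closure $X^q - XZ^{q-1} = Y^{q-1}Z$ has a single point at infinity, and one computes a local parameter $t$ at $\infty$ together with the expansions of $x$ and $y$ in $t$ (here $x$ has a pole of some order and $y$ has a pole of order one relative to the covering of $\P^1$, the precise orders coming from the Riemann–Hurwitz computation already cited in the text). Second, for each type of $(a,m)$ I would determine how the automorphism acts in the local coordinate $t$ at $\infty$ — it fixes $\infty$, so it induces $t \mapsto u t + (\text{higher order})$ for some root of unity $u$ depending on $(a,m)$ — and read off the intersection multiplicity of $\Delta$ and $\Gamma_{(a,m)}$ at the point $(\infty,\infty)$ from the order of vanishing of $(a,m)^*t - t$. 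Third, I would check there are no other intersection points: away from $\infty$ this is the fixed-point count on $A_{q-1}^\circ$ just discussed, which is empty. Assembling: $(\Delta,\Gamma_{(a,m)})$ equals the local multiplicity at $(\infty,\infty)$, which should come out to $q+1$, $q$, and $1$ in the three respective cases.

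**Main obstacle.** The hard part will be the local computation at $\infty$: getting the local parameter and the $t$-expansions of $x, y$ right on the normalization of a fairly singular plane model, and then correctly extracting the automorphism's action and the resulting intersection multiplicities — in particular distinguishing why the $a = 0, m \neq 1$ case gives $q+1$ (an extra $+1$ over the naive $q$) while the $a \neq 0, m = 1$ case gives exactly $q$ and the generic case gives a transverse $1$. An alternative, possibly cleaner, route for the off-diagonal cases is to use the Lefschetz/Woods Hole fixed point formula on $A_{q-1}$ for the automorphism $(a,m)$: $(\Delta, \Gamma_{(a,m)})$ equals $\sum_i (-1)^i \operatorname{tr}((a,m)^* \mid H^i)$, where $H^0$ and $H^2$ each contribute $1$ and the $H^1$-trace is computed via the action on differentials or, better, directly through the known Jacobi-sum description of these motives; this would sidestep the explicit local geometry entirely but requires care that the automorphism is tame (which holds for the $\mu_d$-part but not obviously for the additive part when $p \mid$ something, so the direct geometric count may still be the safest). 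I would present the geometric count as the primary argument and mention the Lefschetz shortcut as a cross-check.
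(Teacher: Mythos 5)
There is a genuine error in your treatment of the case $a=0$, $m\ne 1$. You claim that $(0,m)$ has no fixed points on the affine curve $A_{q-1}^{\circ}$ because ``$\mu_{q-1}$ acts freely on $\k^*$,'' but the relevant fixed points have $y=0$, not $y\in\k^*$: every point $(x,0)$ with $x^q-x=0$, i.e. $x\in\k$, is fixed by $(0,m)$ for \emph{all} $m$ (the paper records exactly this when introducing the $\k\times\mu_d$-action). So $\Delta\cap\Gamma_{(0,m)}$ consists of the $q$ affine points $(P,P)$ with $P=(x,0)$, $x\in\k$, together with $(\infty,\infty)$, all transversal; this is where $q+1$ comes from. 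Your plan to obtain $q+1$ as a local multiplicity at $(\infty,\infty)$ cannot work: in local coordinates $t=x/y$ at $\infty$ the automorphism $(0,m)$ acts by $t\mapsto m^{-1}t$, so the local intersection multiplicity there is $1$, and your computation would produce $1$ instead of $q+1$ for this case.

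Apart from this, your outline agrees with the paper's proof: $(\Delta,\Delta)=2-2g=3q-q^2$ via the Euler characteristic; for $a\ne 0$ the only intersection point is $(\infty,\infty)$; and the local analysis at $\infty$ (using $v_\infty(x)=1-q$, $v_\infty(y)=-q$, $t=x/y$ a uniformizer, and $(a,m)^*t=m^{-1}(t+ay^{-1})$) gives multiplicity $q$ when $m=1$ (from $v_\infty(ay^{-1})=q$) and $1$ when $m\ne 1$. Your proposed Lefschetz cross-check is reasonable in principle, and your caution about wildness of the additive part is warranted; but the primary geometric argument must first be corrected by reinstating the $q$ affine fixed points in the $a=0$, $m\ne1$ case.
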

\begin{proof}
First, $(\D,\D)$ equals the Euler-Poincar\'e characteristic of $A_d$, i.e. $2-(q-1)(q-2)$. 
Secondly if $m\ne 1$, then $\D$ and $\G_{(0,m)}$ meet transversally at the $q+1$ points $\{(P,P) \mid P\in A_d(\k)\}$, where 
$A_d(\k)=\{(x,0) \mid x\in\k\}\cup\{\infty\}$. 
Thirdly if $a \ne 0$, then $\D \cap \G_{(a,1)} =\{(\infty,\infty)\}$. 
Since $v_\infty(x)=1-q$, $v_\infty(y)=-q$, 
the completed local ring of $A_d$ at $\infty$ is $\wh\sO_{A_d,\infty}=\k[[t]]$ where $t=x/y$, and $(a,m)$ maps $t$ to $m^{-1}(t+ay^{-1})$. 
We have
\[\k[[t,s]]/(t-s) \ot_{\k[[t,s]]} \k[[t,s]]/(mt-s-ay^{-1})=\k[[t]]/((m-1)t-ay^{-1}). \]
Its length is $q$ if $m=1$, and is $1$ otherwise since $(m-1)t-ay^{-1} \in t \k[[t]]^\times$.  
The proof is complete.
\end{proof}

\begin{lem}\label{intersection-surf}
Let $S$ be a connected smooth projective surface over $\k$,
and take
\[
e, e' \in \CH_1(S)_\Lambda
= \Hom_{\Chow(\k, \L)}(\Lambda(1), h(S))
= \Hom_{\Chow(\k, \L)}(h(S), \Lambda(1)).
\]
Suppose that the intersection number $n:=(e, e')$ is not zero,
and define
\begin{align*}
&\alpha := \frac{1}{n}(e \times e') \in 
\CH_2(S \times S)_\Lambda = \Hom_{\Chow(\k, \L)}(h(S), h(S)).
\end{align*}
\begin{enumerate}
\item 
We have $\alpha^2 = \alpha$ in $\Hom_{\Chow(\k, \L)}(h(S), h(S))$.
\item 
Set $M:=(S, \alpha) \in \Chow(\k, \L)$
and 
let $\ol{\alpha} \colon M \to h(S)$ and $\ul{\alpha} \colon h(S) \to M$ be 
the inclusion and projection 
(so that $\alpha = \ol{\alpha} \circ \ul{\alpha}$).
Then 
\[ 
\ul{\alpha} \circ e \colon \Lambda(1) \to M
\quad \text{and} \quad
\frac{1}{n}e' \circ \ol{\alpha} \colon M \to \Lambda(1)
\]
are isomorphisms mutually inverse to each other.
\end{enumerate}
\end{lem}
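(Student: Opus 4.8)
The plan is to realize $\alpha$ as $\tfrac1n$ times an endomorphism of $h(S)$ that factors through $\Lambda(1)$; once this is set up, both assertions reduce to formal manipulations of idempotents in the rigid tensor category $\Chow(\k,\L)$, and the only substantive point is the translation between composition of algebraic correspondences and the intersection pairing on $S$.

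Concretely, write $e\colon\Lambda(1)\to h(S)$ and $e'\colon h(S)\to\Lambda(1)$ for the morphisms attached to the given cycle classes via the two identifications in the statement. The key input is the identity
\[
e'\circ e = n\cdot\id_{\Lambda(1)}\quad\text{in}\quad\Hom_{\Chow(\k,\L)}(\Lambda(1),\Lambda(1))=\CH_0(\Spec\k)_\L=\L .
\]
This is a direct unwinding of the composition law for correspondences: one pulls $e$ and $e'$ back to $\Spec\k\times S\times\Spec\k\cong S$, takes their intersection product on $S$, and pushes forward to $\Spec\k$, obtaining exactly $\deg(e\cdot e')=(e,e')=n$. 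The same computation with the composition law identifies $e\circ e'\in\Hom_{\Chow(\k,\L)}(h(S),h(S))=\CH_2(S\times S)_\L$ with the exterior product cycle $e\times e'$ (up to the harmless transposition of the two factors of $S\times S$), so that $\alpha=\tfrac1n\,e\circ e'$.

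Granting this, part (i) is immediate:
\[
\alpha^2=\tfrac1{n^2}\,(e\circ e')\circ(e\circ e')=\tfrac1{n^2}\,e\circ(e'\circ e)\circ e'=\tfrac1{n^2}\,e\circ(n\cdot\id_{\Lambda(1)})\circ e'=\tfrac1n\,e\circ e'=\alpha .
\]
For part (ii), put $u:=\ul\alpha\circ e\colon\Lambda(1)\to M$ and $v:=\tfrac1n\,e'\circ\ol\alpha\colon M\to\Lambda(1)$, which are the two morphisms in the statement. Using $\ol\alpha\circ\ul\alpha=\alpha$, $\ul\alpha\circ\ol\alpha=\id_M$, and $e'\circ e=n\cdot\id_{\Lambda(1)}$, one computes
\[
v\circ u=\tfrac1n\,e'\circ\alpha\circ e=\tfrac1{n^2}\,e'\circ(e\circ e')\circ e=\tfrac1{n^2}\,(e'\circ e)\circ(e'\circ e)=\id_{\Lambda(1)},
\]
\[
u\circ v=\tfrac1n\,\ul\alpha\circ(e\circ e')\circ\ol\alpha=\ul\alpha\circ\alpha\circ\ol\alpha=\ul\alpha\circ(\ol\alpha\circ\ul\alpha)\circ\ol\alpha=(\ul\alpha\circ\ol\alpha)\circ(\ul\alpha\circ\ol\alpha)=\id_M ,
\]
so $u$ and $v$ are mutually inverse isomorphisms.

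The only real obstacle is the second paragraph: carefully matching the composition of correspondences — together with its bookkeeping of Tate twists and relative dimensions — against the intersection pairing and the exterior product on Chow groups, so as to pin down $e'\circ e=n\cdot\id_{\Lambda(1)}$ and $\alpha=\tfrac1n(e\times e')$. Once these two identities are secured, parts (i) and (ii) use nothing about $S$ beyond the invertibility of $n$ in $\Lambda$ and are pure idempotent formalism.
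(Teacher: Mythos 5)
Your proof is correct and is precisely the ``straightforward computation using the definition of the composition of correspondences'' that the paper's proof leaves to the reader: you first pin down $e'\circ e=n\cdot\id_{\Lambda(1)}$ and $\alpha=\tfrac1n\,e\circ e'$, and then both parts follow by idempotent formalism. One small caveat: the transposition you call harmless is harmless for part (i) and for the conclusion $M\simeq\Lambda(1)$, but not for the specific formulas in part (ii) --- with the opposite orientation of $e\times e'$ (so that $\alpha$ acts by $z\mapsto\deg(z\cdot e)\,e'$ rather than $z\mapsto\deg(z\cdot e')\,e$) the two displayed maps compose to $(e,e)(e',e')/n^2$ instead of $1$ --- so the convention must be fixed so that $\alpha=\tfrac1n\,e\circ e'$ with $e$ incoming and $e'$ outgoing, exactly as your computation assumes.
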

\begin{proof}
This is shown by a straightforward computation
using the definition of the composition of correspondences.
\end{proof}

The following are motivic analogues of \eqref{gauss-reflection} and \eqref{gauss-bar}, respectively
(see also Remark \ref{r-inv} below). 

\begin{ppn}\label{as-dual}
Suppose that $\psi \in \wh\k$, $\psi\ne 1$ and $\chi \in \wh\mu_d$, $\chi \ne 1$. 
\begin{enumerate}
\item There is an isomorphism 
\[h(A_d)^{(\psi,\chi)} \ot h(A_d)^{(\ol\psi, \ol\chi)} \simeq \L(1). \]
In particular, $h(A_d)^{(\psi, \chi)}$ is invertible. 
\item 
There is an isomorphism
\[h(A_d)^{(\ol\psi,\chi)} \simeq h(A_d)^{(\psi,\chi)} \ot \L\langle -1 \rangle^\chi, \]
where $\L\langle -1 \rangle^\chi$ is from \eqref{eq:def-Lchi}.
\end{enumerate}
\end{ppn}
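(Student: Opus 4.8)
The plan is to produce both isomorphisms by exhibiting explicit algebraic correspondences and invoking the tools already assembled: Lemma~\ref{intersection-surf} for part~(i) and the group-equivariant machinery (Proposition~\ref{localization}, Lemma~\ref{artin-motive}, Lemma~\ref{lem:L-chi}) for part~(ii). For (i), I first observe that $h(A_d)^{(\psi,\chi)}$ is a direct factor of $h_1(A_d)$ by Proposition~\ref{as-mot}(ii), so it is a motive of weight one living in a curve, and duality for $h_1$ of a curve gives $h_1(A_d)^\vee(1) \simeq h_1(A_d)$ via the intersection pairing on $A_d$ (equivalently, via the transpose correspondence on the diagonal-type cycles). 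Since the projector $e^{(\psi,\chi)}$ has transpose equal to $e^{(\ol\psi,\ol\chi)}$ (as $\ol{e^\chi} = e^{\ol\chi}$ at the level of group rings), this identifies $\bigl(h(A_d)^{(\psi,\chi)}\bigr)^\vee(1)$ with $h(A_d)^{(\ol\psi,\ol\chi)}$, whence the evaluation map gives $h(A_d)^{(\psi,\chi)} \otimes h(A_d)^{(\ol\psi,\ol\chi)} \simeq \L(1)$. To make this concrete and to secure the ``invertible'' conclusion, I would instead apply Lemma~\ref{intersection-surf} on the surface $S = A_d \times A_d$: take $e, e'$ to be suitable $\chi$-isotypic pieces of graph cycles $\G_{(a,m)}$ from Lemma~\ref{intersection}, chosen so that the relevant intersection number is a nonzero element of $\Q(\zeta_{pd})$ — this is exactly what Lemma~\ref{intersection} was set up to compute — and read off that the resulting rank-one motive is $h(A_d)^{(\psi,\chi)} \otimes h(A_d)^{(\ol\psi,\ol\chi)}$, isomorphic to $\L(1)$.

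For (ii), the idea is that replacing $\psi$ by $\ol\psi$ in the Artin--Schreier curve can be realized geometrically by a twist, and the defect between the two is precisely a Kummer (Artin) motive attached to $-1$. Concretely, the automorphism $y \mapsto -y$ does not change the defining equation $x^q - x = y^d$, but it interacts with the $\mu_d$-action by $\chi \mapsto \chi((-1)^{(q-1)/d})$, which is the source of \eqref{gauss-bar}; on the additive side, there is a correspondence between the curve $x^q - x = y^d$ and the ``twisted'' curve attached to $\ol\psi$ that differs by the $d$-th power Kummer torsor of $-1$. I would build the isomorphism by descending to $A_{q-1}$ if necessary (via Proposition~\ref{as-deg}), constructing the correspondence as a $\mu_d$-equivariant morphism between an auxiliary curve and $A_d$ — or between two copies of $A_d$ — then applying Proposition~\ref{localization}(ii) and the product formula of Lemma~\ref{lem:L-chi} to split off the factor $\L\langle -1\rangle^\chi$. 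The bookkeeping with the characters $\chi\circ\mathrm{Km}(-1)$ must match the exponent $(-1)^{(q-1)/d}$ appearing in \eqref{gauss-bar}, which is reassuring as a consistency check.

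The main obstacle I anticipate is in part~(ii): unlike (i), where duality of curve motives is a black box, here I need an honest geometric incarnation of the passage $\psi \leadsto \ol\psi$ together with its $\mu_d$-equivariance, and it is not a priori obvious which auxiliary variety carries the correspondence or that the Artin-motive defect is exactly $\L\langle -1\rangle^\chi$ rather than some other Kummer twist. One clean way around this, which I would try first, is to avoid an explicit correspondence altogether: combine part~(i) with the reflection relation it encodes to write $h(A_d)^{(\ol\psi,\chi)} \otimes h(A_d)^{(\psi,\ol\chi)} \simeq \L(1)$ and $h(A_d)^{(\psi,\chi)} \otimes h(A_d)^{(\ol\psi,\ol\chi)} \simeq \L(1)$, and then reduce (ii) to an isomorphism $h(A_d)^{(\psi,\ol\chi)} \otimes \L\langle -1\rangle^\chi \simeq h(A_d)^{(\ol\psi,\ol\chi)}$ obtained from the $\mu_d$-equivariant automorphism $(x,y)\mapsto(x,-y)$ of $A_d$ composed with Lemma~\ref{lem:L-chi}(ii)–(iii). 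This keeps all the genuinely geometric work inside the single automorphism $y\mapsto -y$ and relegates the rest to the already-proved formal lemmas; the remaining check is the routine identification of $\chi\circ\mathrm{Km}(-1)$ with the sign character $\chi((-1)^{(q-1)/d})$, which I would dispatch by unwinding the definition \eqref{eq:def-Lchi} over the quadratic (or trivial) extension $\k((-1)^{1/d})$.
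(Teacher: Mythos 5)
Your part (i) is essentially the paper's argument: the authors likewise regard $e^{(\psi,\chi)}$ and $e^{(\ol\psi,\ol\chi)}$ as divisor classes on the surface $A_{q-1}\times A_{q-1}$ (after first reducing to $d=q-1$ via Proposition \ref{as-deg} --- a step you should make explicit, since Lemma \ref{intersection} is only stated for $A_{q-1}$), compute the intersection number $(e^{(\psi,\chi)},e^{(\ol\psi,\ol\chi)})=-1$ from Lemma \ref{intersection}, and conclude by Lemma \ref{intersection-surf}. Be aware that your opening duality argument is not a proof by itself: identifying $\bigl(h(A_d)^{(\psi,\chi)}\bigr)^\vee$ with $h(A_d)^{(\ol\psi,\ol\chi)}(-1)$ is formal, but the assertion that the evaluation map is an isomorphism is precisely the invertibility to be proved, so the intersection-number computation you defer is the actual content of (i).

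Part (ii) has a genuine gap. The map $(x,y)\mapsto(x,-y)$ is an automorphism of $A_d$ only when $(-1)^d=1$ in $\k$, i.e.\ when $-1\in\m_d$; and in that case it is nothing other than the action of the group element $(0,-1)\in\k\times\m_d$, which acts on each isotypic piece $h(A_d)^{(\psi,\chi)}$ by the scalar $\chi(-1)$ and therefore cannot yield an isomorphism between two \emph{distinct} isotypic pieces. More fundamentally, any map fixing the $x$-coordinate commutes with the translations $x\mapsto x+a$ and so cannot interchange $\psi$ and $\ol\psi$; your reduction via part (i) only trades the problem $(\psi,\chi)\leadsto(\ol\psi,\chi)$ for the identical problem $(\psi,\ol\chi)\leadsto(\ol\psi,\ol\chi)$. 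The missing geometric input --- which is what the paper's proof supplies --- is the automorphism $f(x,y)=(-x,\zeta y)$ with $\zeta^{d}=-1$, defined over the at most quadratic extension $K=\k(\zeta)$ (for $d=q-1$ and $p$ odd, $K=\k(\m_{2(q-1)})$). Conjugation by $f$ sends the translation by $a$ to the translation by $-a$, hence exchanges $\psi$ and $\ol\psi$, while its failure to descend to $\k$ --- concretely, $f$ intertwines $(a,m)\times\sigma$ with $(-a,-m)\times\sigma$ for the generator $\sigma$ of $\Gal(K/\k)$ --- is exactly what produces the Artin factor $\L\langle -1\rangle^{\chi}$ once one applies $f_{\#}$ to the projector $e^{(\psi,\chi)}\otimes\tfrac{1+\sigma}{2}$ on $A_{d,K}$. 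Your first, vaguer route (a correspondence "differing by the $d$-th power Kummer torsor of $-1$") points in the right direction, but you explicitly leave it unconstructed, and the concrete fallback you offer in its place does not work.
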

\begin{proof}(i) 
By Proposition \ref{as-deg}, we can assume that $d=q-1$. 
Regard $e^{(\psi,\chi)}$ and $e^{(\ol\psi,\ol\chi)}$ as elements of $\CH_1(A_{q-1}\times A_{q-1})_\L$. 
Then the intersection number is computed using Lemma \ref{intersection} as 
\begin{align*}
&(e^{(\psi,\chi)}, e^{(\ol\psi,\ol\chi)})
\\&=\frac{1}{q^2(q-1)^2} \sum_{a,a'\in\k, m,m'\in\k^*} \psi(a'-a) \chi(m'/m) (\G_{(a,m)},\G_{(a',m')})
\\&= \frac{1}{q^2(q-1)^2} \sum_{a,a'\in\k, m,m'\in\k^*} \psi(a'-a) \chi(m'/m) (\D,\G_{(a'-a,m'/m)})
\\&=\frac{1}{q(q-1)} \sum_{a\in\k,m\in\k^*} \psi(a)\chi(m)(\D,\G_{(a,m)})
\\&=\frac{1}{q(q-1)}\left((3q-q^2)+(q+1)\sum_{m\ne 1} \chi(m)+q\sum_{a\ne 0}\psi(a) +\sum_{a\ne 0, m\ne 1}\psi(a)\chi(m) \right)
\\&=\frac{1}{q(q-1)}\left((3q-q^2)-(q+1)-q+(-1)^2\right)=-1. 
\end{align*}
Now Lemma \ref{intersection-surf} completes the proof of (i).

(ii) 
If $p=2$, then the statement is trivial since $\ol\psi=\psi$ and $\L\langle -1 \rangle^\chi=\L$. 
Suppose that $p$ is odd. 
We may further suppose that $d=q-1$ by 
Lemma \ref{lem:L-chi} (iii) and Proposition \ref{as-deg}.
Put $K=\k(\mu_{2(q-1)})$, $A_{q-1,K}=A_{q-1} \times_{\Spec \k} \Spec K$, 
and fix a primitive $2(q-1)$th root of unity $\z \in K$. 
Let $f$ be a $K$-automorphism of $A_{q-1,K}$ defined by $f(x,y)=(-x,\z y)$. 
Then,  we have 
\[ 
f \circ ((a, m) \times \id_{K}) = ((-a, m) \times \id_{K}) \circ f,
\quad
f \circ ((a, m) \times \sigma) = ((-a, -m) \times \sigma)  \circ f
\]
as $\k$-automorphisms of $A_{q-1, K}$,
where $\sigma$ is the generator of $\Gal(K/\k)$.
It follows that 
\[
f_\#\left(e^{(\psi, \chi)} \otimes \frac{1+\sigma}{2}\right) 
= \left(e^{(\ol{\psi}, \chi)} \otimes \frac{1+\chi(-1)\sigma}{2}\right),
\]
where $f_\#$ is from \eqref{eq:f-sharp}.
We conclude
\begin{align*}
h(A_{q-1})^{(\psi, \chi)}
&\simeq
\left(e^{(\psi, \chi)} \otimes \frac{1+\sigma}{2}\right)h(A_{q-1, K})
\\
&
\overset{f}{\simeq}
\left(e^{(\ol\psi, \chi)} \otimes \frac{1+\chi(-1)\sigma}{2}\right)h(A_{q-1, K})
\simeq h(A_{q-1})^{(\ol\psi,\chi)} \ot \L\langle -1 \rangle^\chi,
\end{align*}
as desired.
\end{proof}


%
\subsection{Fermat motives}\label{s-f-m}
In this subsection 
we let $\k$ be an arbitrary perfect field 
and assume that $d$ is a positive integer 
such that $\mu_d:= \{ m \in \k^* \mid m^d=1 \}$ has $d$ elements.
For each positive integer $n$ and $c \in \k^*$,
let $F_d^{(n)}\langle c \rangle \subset \P^n$ be the (twisted) projective Fermat hypersurface of degree $d$ and dimension $n-1$ defined by 
\[u_1^d+\cdots+u_n^d=cu_0^d. \]
When $c=1$, we just write $F_d^{(n)}$ instead of $F_d^{(n)}\langle 1 \rangle$. 
Let $\m_d^n$ act on $F_d^{(n)}\langle c \rangle$  by 
\[(m_1,\dots, m_n)[u_0:u_1:\dots:u_n]=[u_0:m_1u_1:\dots: m_nu_n].\] 
If $d' \mid d$, we have a finite surjective morphism 
\begin{equation}\label{eq:Fd-Fd'}
F_{d}^{(n)}\langle c \rangle \to F_{d'}^{(n)}\langle c \rangle; \quad [u_0 : \dots : u_n] \mapsto [u_0^{d/d'} : \dots : u_n^{d/d'}],
\end{equation}
compatible with the group actions via the homomorphism 
$\m_d^n \to \m_{d'}^n, \ (m_i) \mapsto (m_i^{d/d'})$. 
It is generically Galois (\'etale over $u_0\cdots u_n \ne 0$) with the Galois group $\Ker(\m_d^n \to \m_{d'}^n)$.

Let $\L$ be a field containing $\Q(\zeta_{d})$.  We have the decomposition in $\Chow(\k,\L)$
\[h(F_d^{(n)}\langle c \rangle) = \bigoplus_{\bchi \in \wh\m_d^n} h(F_d^{(n)}\langle c \rangle)^{\bchi}, \quad 
h(F_d^{(n)}\langle c \rangle)^{\bchi}:=(F_d^{(n)}\langle c \rangle,e^{\bchi}). \]

\begin{ppn}\label{f-deg}
Suppose that $d'\mid d$ and $\bchi \in \wh\m_d^n$ factors through $\bchi'\in\wh\m_{d'}^n$ (i.e. $\bchi(m)=\bchi'(m^{d/{d'}})$).  Then we have $h(F_d^{(n)}\langle c \rangle)^\bchi \simeq h(F_{d'}^{(n)}\langle c \rangle)^{\bchi'}$. 
\end{ppn}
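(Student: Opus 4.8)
The plan is to mimic the proof of Proposition~\ref{as-deg}, replacing the covering \eqref{eq:Ad-Ad'} of Artin--Schreier curves by the finite surjective morphism \eqref{eq:Fd-Fd'}
\[f \colon F_d^{(n)}\langle c \rangle \lra F_{d'}^{(n)}\langle c \rangle, \qquad [u_0 : \dots : u_n] \mapsto [u_0^{d/d'} : \dots : u_n^{d/d'}],\]
which is generically Galois with Galois group $\Ker(\m_d^n \surj \m_{d'}^n) = \m_{d/d'}^n$ and is equivariant for the action of $\m_d^n$ on the source and of $\m_{d'}^n$ on the target via the surjection $g \colon \m_d^n \surj \m_{d'}^n$, $(m_i) \mapsto (m_i^{d/d'})$.

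First I would set $G = \m_d^n$ and equip $V := F_{d'}^{(n)}\langle c \rangle$ with the $G$-action obtained by composing $g$ with its given $\m_{d'}^n$-action; then $f \colon U := F_d^{(n)}\langle c \rangle \to V$ becomes a $G$-equivariant finite generically Galois morphism with $\Gal(U/V) = \m_{d/d'}^n \subset G$. Since $\bchi = \bchi' \circ g$ by hypothesis, the restriction of $\bchi$ to $\Gal(U/V)$ is trivial, so Proposition~\ref{localization}~(ii) applies (with this $G$ and with $\bchi$ in the role of $\chi$) and produces an isomorphism $h(U)^{\bchi} \simeq h(V)^{\bchi}$ in $\Chow(\k, \L)$, where $h(V)^{\bchi}$ is the image of the projector $e^{\bchi}$ attached to the $G$-action on $V$.

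It then remains to identify $h(V)^{\bchi}$ with $h(V)^{\bchi'}$, that is, to see that the idempotent $e^{\bchi} \in \L[G]$ for the $G$-action on $V$ equals the idempotent $e^{\bchi'} \in \L[\m_{d'}^n]$ for the $\m_{d'}^n$-action. This is the routine averaging step: because the $G$-action on $V$ factors through $g$ and $\bchi = \bchi' \circ g$, grouping the sum $\frac{1}{d^n} \sum_{m \in G} \ol{\bchi}(m)\, m$ according to the fibers of $g$ (each of cardinality $(d/d')^n$) rewrites it as $\frac{1}{(d')^n} \sum_{m' \in \m_{d'}^n} \ol{\bchi'}(m')\, m'$. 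Composing the two isomorphisms yields $h(F_d^{(n)}\langle c \rangle)^{\bchi} \simeq h(F_{d'}^{(n)}\langle c \rangle)^{\bchi'}$, as claimed.

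I do not expect a genuine obstacle here: the statement is a direct transcription of Proposition~\ref{as-deg} to the Fermat setting. The only point needing a little care is that Proposition~\ref{localization} is phrased for a single group acting on both source and target, which is why one transports the $\m_d^n$-action to $V$ through $g$ before invoking it; the comparison of the two descriptions of the projector is purely formal.
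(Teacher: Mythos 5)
Your proof is correct and is exactly the paper's argument: the paper's proof of this proposition is the one-line "apply Proposition \ref{localization} (ii) to the covering \eqref{eq:Fd-Fd'}," and your write-up merely spells out the (routine) transport of the group action and the identification of the projectors $e^{\bchi}$ and $e^{\bchi'}$.
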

\begin{proof}
This follows from Proposition \ref{localization} (ii) applied to \eqref{eq:Fd-Fd'}.
\end{proof}

Put 
\[\mathfrak{X}_d^{(n)} =\left\{\bchi=(\chi_1,\dots, \chi_n) \in \wh\m_d^n \Bigm| \chi_1,\dots, \chi_n, \prod_{i=1}^n \chi_i \ne 1\right\},\]
and define a projector $e_\prim \in \Q[\m_d^n]$ (with coefficients in $\Q$) by 
\begin{equation}\label{eq:def-prim-fermat}
e_\prim = \sum_{\bchi \in \mathfrak{X}_d^{(n)}} e^\bchi
= \prod_{i=0}^n (1-e_{\io_i(\m_d)}^1), 
\end{equation}
where $\io_i\colon\m_d\to \m_d^n$ is the embedding of the $i$th factor if $i \ne 0$ and $\io_0$ is the diagonal embedding. 
Define 
\[h(F_d^{(n)}\langle c \rangle)_\prim = (F_d^{(n)}\langle c \rangle, e_\prim)=\bigoplus_{\bchi \in \mathfrak{X}_d^{(n)}} h(F_d^{(n)}\langle c \rangle)^\bchi.\] 

\begin{ppn}\label{twist}
For any $\bchi=(\chi_1,\dots, \chi_n) \in \wh\m_d^n$, we have an isomorphism 
\[h(F_d^{(n)}\langle c \rangle)^\bchi \simeq 
h(F_d^{(n)})^\bchi \ot \L\langle c \rangle^{\prod_{i=1}^n\chi_i}.\]
In particular, 
$h(F_d^{(1)} \langle c \rangle)^\chi \simeq \L\langle c \rangle^\chi$
 is invertible for any $\chi\in\wh\m_d$. 
\end{ppn}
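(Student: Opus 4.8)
The plan is to deduce the isomorphism, together with the invertibility assertion, from a single geometric construction, Proposition~\ref{localization}(ii), and the Künneth isomorphism $h(X\times Y)=h(X)\ot h(Y)$ in $\Chow(\k,\L)$. I would first settle the case $n=1$ — which is precisely the ``in particular'' statement — and then reduce the general case to it. Nothing below uses that $\k$ is finite or that $d=q-1$; as the ambient subsection already allows, the argument goes through over any perfect field containing $\m_d$.

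\emph{The case $n=1$.} Since $c\in\k^*$, the scheme $F_d^{(1)}\langle c\rangle\subset\P^1$ is contained in $\{u_0\ne0\}$, so $t:=u_1/u_0$ identifies it $\m_d$-equivariantly with $P:=\Spec\k[t]/(t^d-c)$ (finite étale, as $p\nmid d$), the action being $t\mapsto mt$. Fix a root $\gamma$ of $t^d-c$ and set $K:=\k(\gamma)$, $H:=\mathrm{Km}(c)(\Gal(K/\k))\subseteq\m_d$. The roots $\zeta\gamma$ ($\zeta\in\m_d$) split into $\Gal(\ol\k/\k)$-orbits indexed by $\m_d/H$, which yields a $\m_d$-equivariant isomorphism $P\simeq\coprod_{\m_d/H}\Spec K$ in which $\m_d$ permutes the components and $H$ acts on the distinguished one through $\mathrm{Km}(c)^{-1}\colon H\xrightarrow{\ \sim\ }\Gal(K/\k)$. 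Hence $h(P)=\bigoplus_{\m_d/H}h(\Spec K)$ as an object with $\m_d$-action, and Frobenius reciprocity for this induced object — valid in any $\L$-linear pseudo-abelian category, in particular in $\Chow(\k,\L)$ — identifies, for every $\chi\in\wh\m_d$, the summand $h(P)^\chi$ with the $\chi|_H$-isotypic part of $h(\Spec K)$; transporting the $H$-action to $\Gal(K/\k)$ via $\mathrm{Km}(c)$ turns this into $h(P)^\chi\simeq h(\Spec K)^{\chi\circ\mathrm{Km}(c)}=\L\langle c\rangle^\chi$. Being an Artin motive of the shape $h(\Spec K)^{\text{character}}$, $\L\langle c\rangle^\chi$ is invertible by Lemma~\ref{artin-motive}.

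\emph{The general case.} Put $\wt F:=F_d^{(n)}\times_\k F_d^{(1)}\langle c\rangle$, with the factorwise action of $G:=\m_d^n\times\m_d$, and consider
\[
\Phi\colon\wt F\lra F_d^{(n)}\langle c\rangle,\qquad
\big([v_0{:}\cdots{:}v_n],\,[w_0{:}w_1]\big)\longmapsto[v_0w_0{:}v_1w_1{:}\cdots{:}v_nw_1].
\]
This is well defined because $\sum_{i=1}^n(v_iw_1)^d=w_1^d\sum_iv_i^d=w_1^dv_0^d=c(v_0w_0)^d$, and it is $G$-equivariant for the action of $G$ on the target through the surjection $\rho\colon G\surj\m_d^n$, $\rho(m_1,\dots,m_n;m)=(m_1m,\dots,m_nm)$. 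The essential point is that $\Phi$ is finite and generically Galois: over the dense open locus of $F_d^{(n)}\langle c\rangle$ where all coordinates are nonzero the fibres are free transitive torsors under $\ker\rho=\{(m^{-1},\dots,m^{-1};m)\mid m\in\m_d\}\cong\m_d$, and because $c\ne0$ forces $w_1\ne0$ all fibres of $\Phi$ are finite; so $\Phi$ is finite with $\Gal(\wt F/F_d^{(n)}\langle c\rangle)=\ker\rho$. Now let $\chi_0:=\prod_{i=1}^n\chi_i$. The character $\bchi\ot\chi_0$ of $G$ restricts on $\ker\rho$ to $m\mapsto\chi_0(m)^{-1}\chi_0(m)=1$, so Proposition~\ref{localization}(ii) applied to $\Phi$ gives $h(\wt F)^{\bchi\ot\chi_0}\simeq h(F_d^{(n)}\langle c\rangle)^{\bchi\ot\chi_0}$; since $\bchi\ot\chi_0=\bchi\circ\rho$ and the target's $G$-action factors through $\rho$, the right-hand side is just $h(F_d^{(n)}\langle c\rangle)^{\bchi}$, whereas Künneth (with the factorwise action) gives $h(\wt F)^{\bchi\ot\chi_0}=h(F_d^{(n)})^{\bchi}\ot h(F_d^{(1)}\langle c\rangle)^{\chi_0}$. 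Rewriting the last tensor factor by the case $n=1$ yields the claimed $h(F_d^{(n)}\langle c\rangle)^{\bchi}\simeq h(F_d^{(n)})^{\bchi}\ot\L\langle c\rangle^{\chi_0}$.

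The step I expect to demand the most care is verifying that $\Phi$ is indeed a finite, generically Galois morphism with Galois group exactly $\ker\rho$ — in particular its quasi-finiteness along the coordinate hyperplanes and the identification of the deck group — since that is precisely what licenses the use of Proposition~\ref{localization}(ii). The one remaining subtlety is purely bookkeeping: in the case $n=1$ one must keep the normalizations of $\mathrm{Km}(c)$ and of the $\m_d$-action aligned so that $h(P)^\chi$ comes out as $\L\langle c\rangle^\chi$ and not as $\L\langle c\rangle^{\ol\chi}=\L\langle c^{-1}\rangle^\chi$.
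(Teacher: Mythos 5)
Your proof is correct, but it is organized differently from the paper's. The paper's argument stays with a single isomorphism: it base-changes to $K=\k(c^{1/d})$, where $F_d^{(n)}\langle c\rangle$ and $F_d^{(n)}$ become isomorphic by rescaling the coordinates $u_1,\dots,u_n$ by $c^{-1/d}$, and then computes how this isomorphism conjugates the $\m_d^n\times\Gal(K/\k)$-action, so that $f_\#$ carries the projector $e^{\bchi}\times e^{1}$ to $e^{\bchi}\times e^{\prod_i\chi_i\circ\mathrm{Km}(c)}$; this exhibits both sides of the claimed isomorphism as the same direct factor of $h(F_d^{(n)}\times\Spec K)$. (The ``in particular'' clause is then deduced from the case $n=1$ of the main formula together with $h(F_d^{(1)})\simeq\L[\m_d]$, rather than proved first.) You instead build a $\m_d$-torsor $\Phi\colon F_d^{(n)}\times F_d^{(1)}\langle c\rangle\to F_d^{(n)}\langle c\rangle$ defined over $\k$ and feed it into Proposition~\ref{localization}(ii) plus the K\"unneth decomposition, after computing $h(F_d^{(1)}\langle c\rangle)^{\chi}\simeq\L\langle c\rangle^{\chi}$ directly from the induced-module structure of $\Spec\k[t]/(t^d-c)$. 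The two constructions are the same geometry seen from opposite ends --- base-changing your $\Phi$ to $K$ splits it into translates of the paper's isomorphism $f$ --- but your packaging avoids explicit Galois-descent bookkeeping in the main step (it is confined to the $n=1$ case, where you rightly flag the $\mathrm{Km}(c)$ versus $\mathrm{Km}(c)^{-1}$ normalization) and matches the template used elsewhere in Section~4 (Propositions~\ref{f-deg} and~\ref{hZ}), at the cost of having to verify that $\Phi$ is finite and generically Galois with group $\ker\rho$, which you do correctly ($w_0,w_1\ne0$ on $F_d^{(1)}\langle c\rangle$ since $c\ne 0$, properness plus quasi-finiteness gives finiteness, and $\ker\rho$ acts simply transitively on geometric fibres). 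Both proofs are valid.
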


\begin{proof}Let $\a$ be a $d$th root of $c$, put $K=\k(\a)$, and consider the $K$-isomorphism 
\[f \colon F_d^{(n)}\langle c \rangle \times \Spec K \to F_d^{(n)} \times \Spec K\]
defined by $f([u_0:u_1:\cdots:u_n])=[u_0: \a^{-1} u_1:\dots : \a^{-1} u_n]$. 
Then $f$ sends the graph of $(\x,g) \in \m_d^n \times \Gal(K/\k)$ to the graph of 
$(\iota_0(\mathrm{Km}(c)(g)^{-1})\x, g)$.
Hence the projector $e^\bchi \times e^1$ is mapped to 
\begin{align*}
&\frac{1}{d^nd} \sum_{\x,g} \ol\bchi(\x) (\iota_0(\mathrm{Km}(c)(g)^{-1})\x, g)
=
\frac{1}{d^nd} \sum_{\x,g} \ol\bchi(\iota_0(\mathrm{Km}(c)(g)\x) (\x, g)
\\&=\frac{1}{d^nd} \sum_{\x,g} \ol\bchi(\x) \left(\prod_{i=1}^n \ol\chi_i(\mathrm{Km}(c)(g))\right) (\x,g) = e^\bchi \times e^{\prod_{i=1}^n \chi_i \circ \mathrm{Km}(c)}. 
\end{align*}
Hence the first assertion follows. 
Since $h(F_d^{(1)}) \simeq \L[\m_d]$, the Artin motive of the regular representation of $\m_d$, we have $h(F_d^{(1)})^\chi \simeq \L$, and the second assertion follows. 
\end{proof}

The following proposition will be generalized in 
Proposition \ref{f-mot} below.

\begin{ppn}\label{f-dual2}
If $\bchi \in \mathfrak{X}_d^{(2)}$,  there is an isomorphism 
\[h(F_d^{(2)}\langle c \rangle)^{\bchi} \ot h(F_d^{(2)}\langle c \rangle)^{\ol\bchi} \simeq \L(1). \]
In particular, $h(F_d^{(2)}\langle c \rangle)^\bchi$ is invertible. 
\end{ppn}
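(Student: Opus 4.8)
The plan is to reduce to the untwisted case $c=1$ and then imitate the proof of Proposition~\ref{as-dual}(i), with the Artin--Schreier curve replaced by the plane Fermat curve $F := F_d^{(2)} \subset \P^2$, which is smooth (since $d$ is invertible in $\k$) and geometrically connected of genus $(d-1)(d-2)/2$.

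For the reduction, Proposition~\ref{twist} gives $h(F_d^{(2)}\langle c\rangle)^{\bchi} \simeq h(F)^{\bchi} \otimes \L\langle c\rangle^{\chi_1\chi_2}$ and likewise for $\ol\bchi$; since $\L\langle c\rangle^{\chi_1\chi_2} \otimes \L\langle c\rangle^{\overline{\chi_1\chi_2}} \simeq \L\langle c\rangle^{1} \simeq \L$ by Lemma~\ref{lem:L-chi}(ii) and Proposition~\ref{localization}(ii), and $\L\langle c\rangle^{\chi_1\chi_2}$ is invertible, both assertions for general $c$ follow once they are known for $c=1$. So I would assume $c=1$.

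For $c=1$, I would work on the smooth projective surface $S := F \times F$ and regard $e^{\bchi}, e^{\ol\bchi} \in \End(h(F))$ as classes in $\CH_1(S)_\L$. Pushing graphs forward along $\id \times \eta^{-1}$ gives $(\Gamma_\xi, \Gamma_\eta) = (\Gamma_{\eta^{-1}\xi}, \Delta)$, and summing over $\eta \in \m_d^2$ one obtains
\[
(e^{\bchi}, e^{\ol\bchi}) = \frac{1}{d^2} \sum_{\xi \in \m_d^2} \ol\bchi(\xi)\,(\Delta, \Gamma_\xi),
\]
so everything reduces to the Fermat-curve analogue of Lemma~\ref{intersection}. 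Here $(\Delta,\Delta) = 2 - (d-1)(d-2)$; and for $\xi = (m_1,m_2) \neq (1,1)$ the graphs $\Delta$ and $\Gamma_\xi$ meet transversally (the derivative of a nontrivial finite-order automorphism of a curve at a fixed point is a root of unity $\neq 1$), so $(\Delta, \Gamma_\xi)$ equals the number of $\bar\k$-points of $F$ fixed by $\xi$. Counting those separately on $\{u_0 \neq 0\}$ and on the line at infinity $\{u_0 = 0\}$ yields $d$ if exactly one of $m_1,m_2$ is $1$, again $d$ if $m_1 = m_2 \neq 1$, and $0$ otherwise. Substituting, and using $\sum_{m \neq 1}\ol\chi(m) = -1$ for each of the nontrivial characters $\chi_1,\chi_2,\chi_1\chi_2$ (this is the only place $\bchi \in \mathfrak{X}_d^{(2)}$ enters), the sum collapses to $2 - (d-1)(d-2) - 3d = -d^2$, hence $(e^{\bchi},e^{\ol\bchi}) = -1$. (One can also reach $-1$ via the Lefschetz trace formula together with the standard description of the cohomology of the Fermat curve.)

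Since $-1$ is invertible in $\L$, Lemma~\ref{intersection-surf} applies with $S = F\times F$, $e = e^{\bchi}$, $e' = e^{\ol\bchi}$, $n = -1$, and — exactly as at the end of the proof of Proposition~\ref{as-dual}(i) — identifies $h(F)^{\bchi} \otimes h(F)^{\ol\bchi}$ with $(S, -e^{\bchi}\times e^{\ol\bchi}) \simeq \L(1)$. This is the claimed isomorphism for $c=1$, hence for all $c$; and it exhibits $h(F)^{\ol\bchi}(-1)$ as a tensor inverse of $h(F)^{\bchi}$, so $h(F_d^{(2)}\langle c\rangle)^{\bchi} \simeq h(F)^{\bchi} \otimes \L\langle c\rangle^{\chi_1\chi_2}$ is invertible, being a tensor product of invertibles. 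The one step needing a little bookkeeping is matching the motive produced by Lemma~\ref{intersection-surf} with the tensor product $h(F)^{\bchi} \otimes h(F)^{\ol\bchi}$ (a transposition of the four Fermat factors and the commutativity constraint intervene), but this is handled just as in Proposition~\ref{as-dual}(i). I expect the only genuinely new work to be the fixed-point count, i.e. the Fermat analogue of Lemma~\ref{intersection} — elementary, but with a mild subtlety at the line at infinity for the characters with $m_1 = m_2$.
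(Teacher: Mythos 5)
Your proposal is correct and follows essentially the same route as the paper: reduce to $c=1$ via Proposition \ref{twist} and Lemma \ref{lem:L-chi}, compute the Fermat analogue of Lemma \ref{intersection} (your case analysis, including the $d$ fixed points on the line at infinity when $m_1=m_2\neq 1$, matches the paper's table, phrased there as ``only one of $m_1$, $m_2$, $m_1m_2^{-1}$ is $1$''), deduce $(e^{\bchi},e^{\ol\bchi})=-1$, and conclude by Lemma \ref{intersection-surf} exactly as in Proposition \ref{as-dual}(i).
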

\begin{proof}
This 
can be proved similarly as Proposition \ref{as-dual} (i). 
By Lemma \ref{lem:L-chi} and Propositions \ref{twist}, we can assume that $c=1$. 
For $(m_1,m_2) \in \m_d^2$,  
the intersection numbers on $F_d^{(2)} \times F_d^{(2)}$ are computed as: 
\[(\D,\G_{(m_1,m_2)})=\begin{cases}
2-(d-1)(d-2) & (m_1=m_2=1), 
\\d & (\text{if only one of $m_1$, $m_2$, $m_1m_2^{-1}$ is $1$}), 
\\0 & (\text{otherwise}). 
\end{cases}
\]
Then it follows as before that $(e^{(\chi_1,\chi_2)}, e^{(\ol{\chi_1},\ol{\chi_2})})=-1$, and the proposition follows. 
\end{proof}

To study $h(F_d^{(n)}\langle c \rangle)$ for general $n$, we use the inductive structure of Katsura-Shioda \cite{shioda-katsura}. 
By Proposition \ref{twist}, it suffices to consider the case $c=1$. 
Let $n \ge 2$ and $f_0 \colon F_d^{(n)} \times F_d^{(2)} \dashrightarrow F_d^{(n+1)}$ be 
the rational map defined by 
\begin{align*}
& ([u_0:\cdots:u_{n}], [v_0:v_1:v_2]) 
\\& \mapsto [w_0:\cdots:w_{n+1}]=[u_0v_0:u_1v_0:\cdots:u_{n-1}v_0:u_{n}v_1:u_{n}v_2]. 
\end{align*}
Then $f_0$ is compatible with the actions of $\m_d^n\times\m_d^2$ and $\m_d^{n+1}$ via the map
\begin{equation}\label{eq:def-nu}
\n\colon \m_d^n\times \m_d^2 \to \m_d^{n+1}; \ ((\x_1,\dots, \x_n), (\y_1,\y_2)) \mapsto (\x_1,\dots, \x_{n-1}, \x_n\y_1,\x_n\y_2).
\end{equation}
We have the following commutative diagram:
\[\xymatrix{
F_d^{(n,2)}  \ar[r]^f \ar[d]_\a & F_d^{(n,2)}/H  \ar[d]^\b \\
F_d^{(n)} \times F_d^{(2)}  \ar@{.>}[r]^{f_0}  &  F_d^{(n+1)} .
}\] 
Here,  
\begin{itemize}
\item The morphism $\a$ is the blow-up along $Z=\{u_n=v_0=0\}$. We have 
\begin{equation}\label{eq:def-Z-bu}
Z\simeq F_d^{(n-1)} \times F_d^{(1)}\langle -1 \rangle; 
\quad ([u_i]_{i=0}^n,[v_i]_{i=0}^2) \mapsto ([u_i]_{i=0}^{n-1}, [v_2:v_1]).
\end{equation}
Since $Z$ is smooth, $F_d^{(n,2)}$ is also smooth. 
\item The action of $\m_d^n\times\m_d^2$ on $F_d^{(n)} \times F_d^{(2)}$ respects $Z$ and extends to $F_d^{(n,2)}$. 
\item The morphism $\b$ is the blow-up along $Z_1 \sqcup Z_2$, where $Z_i$ are disjoint smooth closed subschemes of $F_d^{(n+1)}$ defined by
\begin{align*}
& Z_1=\{w_0=\cdots =w_{n-1}=0\}\simeq F_d^{(1)}\langle -1 \rangle; \quad [w_i]_{i=0}^{n+1} \mapsto [w_{n+1}:w_{n}], 
\\& Z_2=\{w_n=w_{n+1}=0\}\simeq F_d^{(n-1)}; \quad [w_i]_{i=0}^{n+1} \mapsto [w_i]_{i=0}^{n-1}.
\end{align*}
\item The action of $\m_d^{n+1}$ on $F_d^{(n+1)}$ respects $Z_i$'s and extends to $F_d^{(n,2)}/H$. 
\item The morphism $f$ is finite and generically Galois with the Galois group
\[H:=\Ker \n=\{((1,\dots,1,\x),(\x^{-1},\x^{-1})) \mid \x \in \m_d\} \simeq \m_d.\]
Also, $f$ is compatible with the group actions via $\n$. 
\end{itemize}

\begin{ppn}\label{hZ}\ 
\begin{enumerate}
\item Let 
$\bchi=(\chi_1,\dots, \chi_{n+1}) \in \wh\m_d^{n+1}$. Then 
\begin{align*}
& h(Z_1)^\bchi \simeq \begin{cases} 
\L\langle -1 \rangle^{\chi_n} 
&  \text{if $\chi_1=\cdots=\chi_{n-1}=\chi_n\chi_{n+1}=1$}, \\ 0 & \text{otherwise}. \end{cases}
\\& h(Z_2)^\bchi \simeq \begin{cases} h(F_d^{(n-1)})^{(\chi_1,\dots, \chi_{n-1})} &  \text{if $\chi_n=\chi_{n+1}=1$}, 
\\ 0 & \text{otherwise}. \end{cases}
\end{align*}

\item Let $\bchi'=((\chi_1,\dots, \chi_n),(\chi_1',\chi_2')) \in \wh\m_d^n \times \wh\m_d^2$. 
Then 
\begin{align*}
& h(F_d^{(n,2)}/H)^{\bchi'}\simeq 
\begin{cases}
h(F_d^{(n,2)})^{\bchi'} & \text{if $\chi_n=\chi_1'\chi_2'$}, 
\\ 0 &  \text{otherwise}. 
\end{cases}
\\& h(Z)^{\bchi'} \simeq 
\begin{cases} h(F_d^{(n-1)})^{(\chi_1,\dots, \chi_{n-1})} \ot \L\langle -1 \rangle^{\chi_1'} & \text{if $\chi_n=\chi_1'\chi_2'=1$}, \\ 0  & \text{otherwise}.\end{cases}
\end{align*} 
\end{enumerate}
\end{ppn}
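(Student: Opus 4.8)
The plan is to reduce every assertion to the explicit geometric descriptions of $Z_1$, $Z_2$, $F_d^{(n,2)}/H$ and $Z$ already recorded just above the proposition, and then to track the induced torus actions. Two elementary principles will be used repeatedly. First: if a finite abelian group $G$ acts on a motive $M$ through a surjection $\pi\colon G\twoheadrightarrow G'$ (equivalently, $\ker\pi$ acts trivially), then, $\L$ being large enough, the projector $e^\chi$ acts as $0$ on $M$ unless $\chi$ is trivial on $\ker\pi$, while if $\chi=\chi'\circ\pi$ then $M^\chi=M^{\chi'}$; this is immediate from the identity $\sum_{k\in\ker\pi}\ol{\chi}(gk)=0$ valid when $\chi|_{\ker\pi}\ne 1$, in the spirit of the computation in the proof of Proposition \ref{twist}. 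Second: for a product action of $G_1\times G_2$ on $X_1\times X_2$ one has $h(X_1\times X_2)^{(\chi_1,\chi_2)}\simeq h(X_1)^{\chi_1}\ot h(X_2)^{\chi_2}$, since $e^{(\chi_1,\chi_2)}$ corresponds to $e^{\chi_1}\ot e^{\chi_2}$ under $\Q[G_1\times G_2]=\Q[G_1]\ot\Q[G_2]$.

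For part (i): on $Z_1=\{w_0=\dots=w_{n-1}=0\}\simeq F_d^{(1)}\langle -1\rangle$ the factors $m_1,\dots,m_{n-1}$ of $\m_d^{n+1}$ act trivially, and $(m_n,m_{n+1})$ acts on $\P^1=\{[w_n:w_{n+1}]\}$, hence on $F_d^{(1)}\langle -1\rangle$ (where $\m_d$ scales $w_n$) through $m_nm_{n+1}^{-1}$; thus the $\m_d^{n+1}$-action factors through $(m_\bullet)\mapsto m_nm_{n+1}^{-1}$, and the first principle together with the ``in particular'' part of Proposition \ref{twist} gives $h(Z_1)^\bchi\simeq\L\langle -1\rangle^{\chi_n}$ when $\chi_1=\dots=\chi_{n-1}=\chi_n\chi_{n+1}=1$ and $0$ otherwise. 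On $Z_2=\{w_n=w_{n+1}=0\}\simeq F_d^{(n-1)}$ the factors $m_n,m_{n+1}$ act trivially while $m_1,\dots,m_{n-1}$ act by the standard $\m_d^{n-1}$-action, so the action factors through the projection $\m_d^{n+1}\to\m_d^{n-1}$ and the same principle yields the claim.

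For part (ii): since $H=\ker\n=\Gal(f)$ acts trivially on $F_d^{(n,2)}/H$, the $\m_d^n\times\m_d^2$-action on this quotient factors through $\n$; hence $h(F_d^{(n,2)}/H)^{\bchi'}$ vanishes unless $\bchi'|_H=1$, and a direct check shows $\bchi'|_H=1$ is equivalent to $\chi_n=\chi_1'\chi_2'$. In that case $\bchi'$ is trivial on $\Gal(f)=H\subset\m_d^n\times\m_d^2$, so Proposition \ref{localization} (ii) applied to $f\colon F_d^{(n,2)}\to F_d^{(n,2)}/H$ identifies $h(F_d^{(n,2)}/H)^{\bchi'}$ with $h(F_d^{(n,2)})^{\bchi'}$, as asserted. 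Finally, on $Z=\{u_n=v_0=0\}\simeq F_d^{(n-1)}\times F_d^{(1)}\langle -1\rangle$ of \eqref{eq:def-Z-bu}, the factor $\x_n$ acts trivially, $\x_1,\dots,\x_{n-1}$ act standardly on $F_d^{(n-1)}$, and $(\y_1,\y_2)$ acts on the second factor (with homogeneous coordinates $[v_2:v_1]$, $\m_d$ scaling $v_1$) through $\y_1\y_2^{-1}$; so the action factors through $\m_d^n\times\m_d^2\to\m_d^{n-1}\times\m_d$, the projector $e^{\bchi'}$ survives exactly when $\chi_n=1$ and $\chi_1'\chi_2'=1$, and then combining the product decomposition with Proposition \ref{twist} for the $F_d^{(1)}\langle -1\rangle$-factor gives $h(Z)^{\bchi'}\simeq h(F_d^{(n-1)})^{(\chi_1,\dots,\chi_{n-1})}\ot\L\langle -1\rangle^{\chi_1'}$.

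The only real obstacle is bookkeeping: matching each torus factor with the homogeneous coordinates it scales under the various isomorphisms, and handling the inversions so that the surviving character lands on the correct side. This is harmless here because in every non-vanishing case the constraint forces the characters in question to be mutually inverse (e.g. $\chi_n=\ol{\chi_{n+1}}$ on $Z_1$, $\chi_1'=\ol{\chi_2'}$ on $Z$), so the two possible conventions for $\L\langle -1\rangle^{(\cdot)}$ coincide; there is no conceptual difficulty beyond this.
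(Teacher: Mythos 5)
Your proposal is correct and follows essentially the same route as the paper: identify the subgroup of the torus acting trivially on each of $Z_1$, $Z_2$, $F_d^{(n,2)}/H$ and $Z$ (the kernel of the induced action, forcing $e^{\bchi}$ to vanish when the character is nontrivial there), and otherwise descend the character through the quotient and apply Proposition \ref{twist} for the $F_d^{(1)}\langle -1\rangle$ factors and Proposition \ref{localization} (ii) for the quotient by $H$. The coordinate bookkeeping (e.g.\ that $(m_n,m_{n+1})$ acts on $[w_{n+1}:w_n]$ through $m_nm_{n+1}^{-1}$, yielding $\chi_n$, and $(\eta_1,\eta_2)$ on $[v_2:v_1]$ through $\eta_1\eta_2^{-1}$, yielding $\chi_1'$) matches the paper's computation of the stabilizers $\m_d^{n-1}\times\{(\z,\z)\}$ and $\{1\}^{n-1}\times\m_d\times\{(\y,\y)\}$.
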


\begin{proof}
(i) The stabilizer of $Z_1$ (resp. $Z_2$) in $\m_d^{n+1}$ is $\m_d^{n-1}\times \{(\z,\z) \mid \z\in\m_d\}$ (resp. $\{1\}^{n-1} \times \m_d^2$) and $\bchi$ is trivial on this group if and only if $\chi_1=\cdots=\chi_{n-1}=\chi_n\chi_{n+1}=1$ (resp. $\chi_n=\chi_{n+1}=1$), and then 
we have respectively $h(Z_1)^\bchi\simeq \L\langle -1 \rangle^{\chi_n}$ 
by Proposition \ref{twist}, 
and $h(Z_2)^\bchi\simeq h(F_d^{(n-1)})^{(\chi_1,\dots, \chi_{n-1})}$. 
Otherwise, $h(Z_i)^\bchi=0$. 

(ii) The first formula follows from Proposition \ref{localization} (ii) since $(\m_d^n\times \m_d^2)/H \simeq \m_d^{n+1}$ and the pull-back of the characters of $\m_d^{n+1}$ are of the form $((\chi_1,\dots,\chi_{n-1},\chi_n\chi_{n+1}),(\chi_{n},\chi_{n+1}))$. 
On the other hand, the stabilizer of $Z$ in $\m_d^n\times\m_d^2$ is $\{1\}^{n-1} \times \m_d\times \{(\y,\y) \mid \y\in\m_d\}$, and $\bchi'$ is trivial on this subgroup if and only if $\chi_n=1$ and $\chi_1'\chi_2'=1$. 
If the condition is satisfied, we have 
$h(Z)^{\bchi'}\simeq h(F_d^{(n-1)})^{(\chi_1,\dots, \chi_{n-1})} \ot \L\langle -1 \rangle^{\chi_1'}$
by Proposition \ref{twist}, and $h(Z)^{\bchi'}=0$ otherwise. 
Hence the second formula follows. 
\end{proof}

\begin{ppn}\label{f-inductive}
Let $\bchi=(\chi_1,\dots, \chi_{n+1}) \in \wh\m_d^{n+1}$ and put 
\[\bchi^{(n)}=(\chi_1,\dots, \chi_{n-1},\chi_n\chi_{n+1}), \ \bchi^{(2)}=(\chi_n,\chi_{n+1}), 
\ \bchi^{(n,2)}=(\bchi^{(n)},\bchi^{(2)}).\]
Then we have an isomorphism 
\begin{align*}
& h(F_d^{(n+1)})^\bchi \oplus 
\left(\bigoplus_{i=1}^{n-1} h(Z_1)^\bchi(i)\right) \oplus h(Z_2)^\bchi(1) 
\\&  \simeq 
\left(h(F_d^{(n)})^{\bchi^{(n)}} \ot h(F_d^{(2)})^{\bchi^{(2)}}\right) \oplus h(Z)^{\bchi^{(n,2)}}(1).
\end{align*}
\end{ppn}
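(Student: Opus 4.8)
The plan is to work through the square relating $\a$, $\b$, $f$ and $f_0$ displayed above: I would apply the blow-up formula (Proposition~\ref{prop:blow-up}) to $\a$ and to $\b$, apply Proposition~\ref{localization}(ii) to $f$, and pass to the relevant isotypic component at each stage.

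First I would record the two projected blow-up decompositions. Since $\a\colon F_d^{(n,2)}\to F_d^{(n)}\times F_d^{(2)}$ is the blow-up along $Z$, smooth of codimension $2$, Proposition~\ref{prop:blow-up} together with $h(F_d^{(n)}\times F_d^{(2)})^{(\bchi^{(n)},\bchi^{(2)})}\simeq h(F_d^{(n)})^{\bchi^{(n)}}\ot h(F_d^{(2)})^{\bchi^{(2)}}$ gives, after taking the $\bchi^{(n,2)}$-part for the $\m_d^n\times\m_d^2$-action,
\[
h(F_d^{(n,2)})^{\bchi^{(n,2)}}\simeq\bigl(h(F_d^{(n)})^{\bchi^{(n)}}\ot h(F_d^{(2)})^{\bchi^{(2)}}\bigr)\oplus h(Z)^{\bchi^{(n,2)}}(1).
\]
Since $\b\colon F_d^{(n,2)}/H\to F_d^{(n+1)}$ is the blow-up along $Z_1\sqcup Z_2$, of codimensions $n$ and $2$, taking the $\bchi$-part for the $\m_d^{n+1}$-action gives
\[
h(F_d^{(n,2)}/H)^\bchi\simeq h(F_d^{(n+1)})^\bchi\oplus\Bigl(\bigoplus_{i=1}^{n-1}h(Z_1)^\bchi(i)\Bigr)\oplus h(Z_2)^\bchi(1).
\]

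Next I would bridge the two left-hand sides through $f\colon F_d^{(n,2)}\to F_d^{(n,2)}/H$, a finite generically Galois morphism with group $H\simeq\m_d$. The $\m_d^n\times\m_d^2$-action on $F_d^{(n,2)}/H$ factors through $\n$, and under $\L[\m_d^n\times\m_d^2]\to\L[\m_d^{n+1}]$ the projector $e^{\bchi^{(n,2)}}$ maps to $e^\bchi$ because $\bchi\circ\n=\bchi^{(n,2)}$ and every fibre of $\n$ has $|H|$ elements; hence $h(F_d^{(n,2)}/H)^{\bchi^{(n,2)}}=h(F_d^{(n,2)}/H)^\bchi$. A one-line computation shows $\bchi^{(n,2)}$ is trivial on $H=\{((1,\dots,1,\x),(\x^{-1},\x^{-1}))\}$, so Proposition~\ref{localization}(ii) yields $h(F_d^{(n,2)})^{\bchi^{(n,2)}}\simeq h(F_d^{(n,2)}/H)^\bchi$. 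Identifying the right-hand sides of the two displayed isomorphisms by means of this and rearranging the summands produces exactly the asserted isomorphism.

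The step I expect to cause the real trouble is the equivariance of the blow-up formula: Proposition~\ref{prop:blow-up} is stated as a bare isomorphism, whereas here the decomposition must be compatible with the finite group action so that $e^\bchi$ may be applied summand by summand. This should be routine once one observes that a group acting on $V$ and stabilizing $Z$ lifts uniquely to the blow-up and to the exceptional divisor $\P(N_{Z/V})$, so the localization and projective-bundle triangles behind the blow-up formula are equivariant and the idempotent $e^\bchi$ can be applied throughout; still, it is the one point worth making explicit. The remaining bookkeeping — juggling the three groups $\m_d^{n+1}$, $\m_d^n\times\m_d^2$, $H$ and the identity $\bchi\circ\n=\bchi^{(n,2)}$ — is elementary.
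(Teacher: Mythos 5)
Your proof is correct and is exactly the paper's argument: the paper's entire proof reads ``Compute $h(F_d^{(n,2)})^{\bchi^{(n,2)}}$ in two ways using Propositions \ref{prop:blow-up} and \ref{hZ},'' i.e.\ the two blow-up decompositions via $\a$ and $\b$ glued through the Galois cover $f$ with $\bchi^{(n,2)}=\bchi\circ\n$ trivial on $H$. Your remark on the equivariance of the blow-up isomorphism is a genuine point that the paper leaves implicit, and your justification of it is the standard and correct one.
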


\begin{proof}
Compute $h(F_d^{(n,2)})^{\bchi^{(n,2)}}$ in two ways using Propositions 
\ref{prop:blow-up} and \ref{hZ}.
\end{proof}

\begin{ppn}\label{f-mot}
Let $n \ge 2$, $\bchi=(\chi_1,\dots, \chi_n) \in \wh\m_d^n$ and 
write $\mathbf{1}=(1,\dots,1)$. 
\begin{enumerate}
\item $h(F_d^{(n)}\langle c \rangle)^{\mathbf{1}} \simeq \bigoplus_{i=0}^{n-1} \L(i)$. 
\item If $\chi_{n-1} \chi_{n}\ne 1$, then 
\[h(F_d^{(n)}\langle c \rangle)^\bchi \simeq h(F_d^{(n-1)}\langle c \rangle)^{(\chi_1,\dots, \chi_{n-2},\chi_{n-1}\chi_n)} \ot h(F_d^{(2)})^{(\chi_{n-1},\chi_n)}.\]
\item  If   $\chi_{n-1} \chi_{n}=1$ and $\bchi\in\mathfrak{X}_d^{(n)}$, then 
\[h(F_d^{(n)}\langle c \rangle)^\bchi \simeq h(F_d^{(n-2)}\langle c \rangle)^{(\chi_1,\dots, \chi_{n-2})} \ot \L\langle -1 \rangle^{\chi_{n-1}}(1).\]
\item If $\bchi \not \in \mathfrak{X}_d^{(n)}\cup\{\mathbf{1}\}$, then $h(F_d^{(n)}\langle c \rangle)^\bchi=0$. 
\end{enumerate}
In particular, 
if $\bchi \in \mathfrak{X}_d^{(n)}$,  
there is an isomorphism 
\begin{equation}\label{j-reflextion-mot}
h(F_d^{(n)}\langle c \rangle)^{\bchi} \ot h(F_d^{(n)}\langle c \rangle)^{\ol\bchi} \simeq \L(n-1), 
\end{equation}
and hence $h(F_d^{(n)}\langle c \rangle)^\bchi$ is invertible. 
\end{ppn}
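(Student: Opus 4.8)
The plan is to prove Proposition \ref{f-mot} by induction on $n$, with the base case $n=2$ supplied by Propositions \ref{twist} and \ref{f-dual2}, and the inductive step driven by the Katsura--Shioda formula of Proposition \ref{f-inductive}. By Proposition \ref{twist} it suffices to treat the case $c=1$ throughout; the twist by $\L\langle c\rangle^{\prod\chi_i}$ can be reinstated at the end. For $n=2$, part (i) is $h(F_d^{(2)})^{\mathbf 1}\simeq \L\oplus\L(1)$, which follows by applying Proposition \ref{localization} (ii) to $F_d^{(2)}\to F_d^{(2)}/\m_d^2\simeq \P^1$; parts (ii)--(iv) are vacuous or contained in Propositions \ref{twist} and \ref{f-dual2}, and the final invertibility statement \eqref{j-reflextion-mot} is exactly Proposition \ref{f-dual2}.

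For the inductive step, assume the full statement (i)--(iv) for all Fermat motives of dimension $<n$, and let $\bchi=(\chi_1,\dots,\chi_n)\in\wh\m_d^n$. Apply Proposition \ref{f-inductive} with the roles of the indices arranged so that the last two coordinates $\chi_{n-1},\chi_n$ play the part of ``$\chi_n,\chi_{n+1}$'' there; this gives
\begin{align*}
& h(F_d^{(n)})^\bchi \oplus \left(\bigoplus_{i=1}^{n-2} h(Z_1)^\bchi(i)\right) \oplus h(Z_2)^\bchi(1) \\
& \simeq \left(h(F_d^{(n-1)})^{\bchi^{(n-1)}} \ot h(F_d^{(2)})^{(\chi_{n-1},\chi_n)}\right) \oplus h(Z)^{\bchi^{(n-1,2)}}(1),
\end{align*}
where $\bchi^{(n-1)}=(\chi_1,\dots,\chi_{n-2},\chi_{n-1}\chi_n)$. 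Now one evaluates every term on both sides using the explicit descriptions in Proposition \ref{hZ} together with the induction hypothesis applied to $h(F_d^{(n-1)})^{\bchi^{(n-1)}}$, and then isolates $h(F_d^{(n)})^\bchi$ by a Krull--Schmidt cancellation (Proposition \ref{prop:invertible}), after checking that all summands in sight are direct sums of invertible objects. The case analysis splits exactly along the four cases of the proposition: (a) if $\chi_{n-1}\chi_n\ne1$ then $h(Z_1)^\bchi$, $h(Z_2)^\bchi$, and $h(Z)^{\bchi^{(n-1,2)}}$ all vanish by Proposition \ref{hZ} (their triviality conditions force $\chi_{n-1}\chi_n=1$ or $\chi_{n-1}=\chi_n=1$), so the formula collapses directly to (ii); (b) if $\chi_{n-1}\chi_n=1$ but $\bchi\in\mathfrak X_d^{(n)}$, then $h(F_d^{(2)})^{(\chi_{n-1},\chi_n)}=h(F_d^{(2)})^{(\chi_{n-1},\ol{\chi_{n-1}})}$, $h(Z_1)^\bchi$ still vanishes (it would need $\chi_1=\cdots=\chi_{n-1}=1$, contradicting $\bchi\in\mathfrak X_d^{(n)}$ since then $\chi_n=1$ too), while $h(Z_2)^\bchi$ vanishes and $h(Z)^{\bchi^{(n-1,2)}}\simeq h(F_d^{(n-2)})^{(\chi_1,\dots,\chi_{n-2})}\ot\L\langle-1\rangle^{\chi_{n-1}}$; one also uses the induction hypothesis (iii) or the invertibility of $h(F_d^{(n-1)})^{\bchi^{(n-1)}}$ together with Proposition \ref{f-dual2} to identify the right side, and cancels to obtain (iii); (c)--(d) the remaining degenerate cases $\bchi\notin\mathfrak X_d^{(n)}\cup\{\mathbf1\}$ and $\bchi=\mathbf1$ are handled by the same bookkeeping, the latter giving (i) via the induction hypothesis $h(F_d^{(n-1)})^{\mathbf1}\simeq\bigoplus_{i=0}^{n-2}\L(i)$ and $h(F_d^{(2)})^{\mathbf1}\simeq\L\oplus\L(1)$, plus a count of the blow-up contributions.

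Finally, once (i)--(iv) are established for all $n$, the concluding statement follows: for $\bchi\in\mathfrak X_d^{(n)}$, iterating (ii) and (iii) expresses $h(F_d^{(n)}\langle c\rangle)^\bchi$ as a tensor product of motives of the form $h(F_d^{(2)})^{(\ast,\ast)}$, Artin motives $\L\langle\pm1\rangle^{\ast}$, and Tate twists $\L(j)$, each of which is invertible by Proposition \ref{f-dual2}, Lemma \ref{lem:L-chi}/Lemma \ref{artin-motive}, and the definition of $\L(1)$; a tensor product of invertible objects is invertible. The reflection isomorphism \eqref{j-reflextion-mot} then follows either by tensoring the corresponding isomorphisms for each factor (using $h(F_d^{(2)}\langle c\rangle)^\bchi\ot h(F_d^{(2)}\langle c\rangle)^{\ol\bchi}\simeq\L(1)$ from Proposition \ref{f-dual2} and $\L\langle -1\rangle^{\chi}\ot\L\langle-1\rangle^{\ol\chi}\simeq\L$) and counting Tate twists, or more cleanly by observing that for an invertible object $L$ one always has $L\ot L^\vee\simeq\L$, combined with the duality $h(F_d^{(n)}\langle c\rangle)^{\bchi,\vee}(n-1)\simeq h(F_d^{(n)}\langle c\rangle)^{\ol\bchi}$ coming from Poincar\'e duality on the smooth projective variety $F_d^{(n)}\langle c\rangle$ of dimension $n-1$ together with the fact that $e^{\bchi}$ transposes to $e^{\ol\bchi}$. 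I expect the main obstacle to be purely organizational rather than conceptual: keeping the four-way case analysis synchronized between the two sides of the Katsura--Shioda isomorphism, verifying in each case that no unexpected term survives in Proposition \ref{hZ}, and confirming that every object appearing is a finite direct sum of invertibles so that Proposition \ref{prop:invertible} legitimately cancels the common summands.
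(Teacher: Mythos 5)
Your proposal follows essentially the same route as the paper's proof: reduction to $c=1$ by Proposition \ref{twist}, induction on $n$ via the Katsura--Shioda decomposition (Propositions \ref{f-inductive} and \ref{hZ}), and isolation of $h(F_d^{(n)})^\bchi$ by Krull--Schmidt cancellation (Proposition \ref{prop:invertible}); the case analysis you describe matches the one carried out in the text, and your derivation of \eqref{j-reflextion-mot} by iterating (ii)--(iii) and tensoring the reflection isomorphisms for the $n=2$ and Artin factors is also how the final assertion is obtained.

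The one point that needs repair is your base case. Part (iv) for $n=2$ --- the vanishing of $h(F_d^{(2)})^{(\chi_1,\chi_2)}$ when exactly one of $\chi_1,\chi_2,\chi_1\chi_2$ is trivial --- is \emph{not} contained in Propositions \ref{twist} and \ref{f-dual2}: the former only gives the twist reduction and the $n=1$ case, and the latter only treats $\bchi\in\mathfrak X_d^{(2)}$. This vanishing is indispensable in your inductive step (for instance, to kill the term $h(F_d^{(2)})^{(\chi_{n-1},\chi_n)}$ on the right-hand side when $\chi_{n-1}\chi_n=1$ but $\chi_{n-1}\ne1$), so it cannot be waved away. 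It can be supplied by the same device you use for part (i): apply Proposition \ref{localization} (ii) to the quotient of the Fermat curve by the relevant copy of $\m_d$ (the first factor, the second factor, or the diagonal, according to which of $\chi_1,\chi_2,\chi_1\chi_2$ is trivial); each quotient is a rational curve, so the eigenpart for the residual nontrivial character vanishes, exactly as in the proof of Proposition \ref{as-mot} (iii). (The paper itself outsources the entire $n=2$ case to \cite[Proposition 2.9]{otsubo1}.) Two further small remarks: the non-vanishing condition for $h(Z_1)^\bchi$ in your case (b) should read $\chi_1=\cdots=\chi_{n-2}=\chi_{n-1}\chi_n=1$ rather than $\chi_1=\cdots=\chi_{n-1}=1$ (the conclusion you draw is unaffected); and be aware that the ``same bookkeeping'' for case (iv) of the inductive step is where most of the labor lies --- in the sub-case where $\chi_{n-1}\chi_n=1$, $\chi_{n-1}\ne 1$ and $(\chi_1,\dots,\chi_{n-2})=\mathbf 1$, both sides carry non-zero invertible summands and the Krull--Schmidt cancellation is genuinely needed, not merely a formality.
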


\begin{proof}
As before, we can assume that $c=1$. 
The case $n=2$ is proved in \cite[Proposition 2.9]{otsubo1}. 
Let $n \ge 2$ and we prove the statements for 
$\bchi=(\chi_1,\dots, \chi_{n+1}) \in \wh\m_d^{n+1}$ by induction on $n$. 
Put $\bchi^{(n-1)}=(\chi_1,\dots,\chi_{n-1})$.
We also use the results and notations of Propositions \ref{hZ} and  \ref{f-inductive}.  
It should be possible to trace the isomorphisms arising from 
these propositions,
but we avoid it by resorting to the Krull-Schmidt principle, 
i.e. Proposition \ref{prop:invertible}.

(1) If $\bchi=\mathbf{1}$, then by the induction hypothesis,  
$h(Z_1)^\bchi \simeq \L$, $h(Z_2)^\bchi
\simeq \bigoplus_{i=0}^{n-2} \L(i)
\simeq h(Z)^{\bchi^{(n,2)}}$, and 
$h(F_d^{(n)})^{\bchi^{(n)}} \ot h(F_d^{(2)})^{\bchi^{(2)}} 
\simeq \left(\bigoplus_{i=0}^{n-1} \L(i)\right) \ot \left(\bigoplus_{i=0}^1 \L(i)\right)$. 
Hence (i) follows. 

From now on, we suppose $\bchi\ne\mathbf{1}$. 

(2) If $\chi_{n-1}\chi_n \ne 1$, then $h(Z_1)^\bchi=h(Z_2)^\bchi=h(Z)^{\bchi^{(n,2)}}=0$, and (ii) follows. 

(2-1) If moreover $\bchi \not \in \mathfrak{X}_d^{(n+1)}$, then we have either $\bchi^{(n)} \not\in \mathfrak{X}_d^{(n)} \cup\{\mathbf{1}\}$ 
or  $\bchi^{(2)} \not\in \mathfrak{X}_d^{(2)} \cup\{\mathbf{1}\}$. Hence $h(F_d^{(n+1)})^\bchi =0$ by (ii) and the induction hypothesis. 

(3) Suppose $\chi_n\chi_{n+1}=1$, so that $\bchi^{(n)} \not\in \mathfrak{X}_d^{(n)}$.  

(3-1) If $\chi_n=\chi_{n+1}=1$, then $\bchi \not\in \mathfrak{X}_d^{(n+1)}$ and $\bchi^{(n)}\ne\mathbf{1}$.  
We have 
$h(Z_1)^\bchi=0$,  
$h(Z_2)^\bchi\simeq (F_d^{(n-1)})^{\bchi^{(n-1)}} \simeq h(Z)^{\bchi^{(n,2)}}$, 
and $h(F_d^{(n)})^{\bchi^{(n)}}=0$ by the induction hypothesis, hence $h(F_d^{(n+1)})^\bchi=0$. 

(3-2) If $\chi_n \ne 1$ (so $\chi_{n+1}\ne 1$), then $h(Z_2)^\bchi=0$, and $h(F_d^{(2)})^{\bchi^{(2)}}=0$ by the induction hypothesis. 

(3-2-1)
If $\bchi^{(n-1)}=\mathbf{1}$ (so $\bchi \not\in\mathfrak{X}_d^{(n+1)}$), then $h(Z_1)^\bchi\simeq\L\langle -1 \rangle^{\chi_n}$ and 
$h(Z)^{\bchi^{(n,2)}} \simeq \bigoplus_{i=0}^{n-2} \L\langle -1 \rangle^{\chi_n}(i)$ by the induction hypothesis, which implies $h(F_d^{(n+1)})^\bchi=0$.  

(3-2-2)
If $\bchi^{(n-1)}\ne \mathbf{1}$, then $h(Z_1)^\bchi=0$. Hence (iii) follows. 
Moreover if $\bchi \not \in \mathfrak{X}_d^{(n+1)}$ (so $n \ge 3$), then $\bchi^{(n-1)} \not \in \mathfrak{X}_d^{(n-1)}$ and 
\[h(F_d^{(n+1)})^\bchi \simeq h(Z)^{\bchi^{(n,2)}}(1)\simeq h(F_d^{(n-1)})^{\bchi^{(n-1)}} \ot \L\langle -1\rangle^{\chi_n}(1)=0\] 
by the induction hypothesis. This finishes the proof of (iv). 
\end{proof}

\begin{rmk}The relations (ii), (iii) are motivic analogues of the functional equations
\begin{align*}
&B(s_1,\dots, s_{n-1},s_n)=B(s_1,\dots, s_{n-2}, s_{n-1}+s_n) B(s_{n-1},s_n), \\
&B(s_1,\dots, s_{n-1},1-s_{n-1})=\frac{B(s_1,\dots, s_{n-2})}{s_1+\cdots+s_{n-2}} \cdot\frac{\pi}{\sin \pi s_{n-1}}, 
\end{align*}
which follows by Remark \ref{gamma-beta}.  
\end{rmk}

\begin{cor}\label{f-mot-Q}
We have 
\[h(F_d^{(n)}\langle c \rangle) \simeq h(F_d^{(n)}\langle c \rangle)_\prim \oplus \bigoplus_{i=0}^{n-1} \Q(i)\] in $\Chow(\k,\Q)$,
where $(-)_\prim$ denotes the direct factor defined by \eqref{eq:def-prim-fermat}.  
\end{cor}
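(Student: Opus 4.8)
The plan is to use that the projector $e_\prim \in \Q[\m_d^n]$ of \eqref{eq:def-prim-fermat} is already defined with $\Q$-coefficients, split off its complement, and analyze that complement by scalar extension to $\L := \Q(\z_d)$, where the full character decomposition and Proposition~\ref{f-mot} become available. Write $X := F_d^{(n)}\langle c\rangle$. Since $e_\prim$ is a rational idempotent, $h(X) = h(X)_\prim \oplus (X, 1-e_\prim)$ in $\Chow(\k,\Q)$, so it remains to identify $(X, 1-e_\prim)$ with $\bigoplus_{i=0}^{n-1}\Q(i)$.

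Consider also $e_{\mathbf{1}} := e^1_{\m_d^n} = d^{-n}\sum_{\x\in\m_d^n}\x$, which lies in $\Q[\m_d^n]$ because the trivial character is $\Gal(\L/\Q)$-fixed. From the factorizations $e_\prim = \prod_{i=0}^n(1-e^1_{\io_i(\m_d)})$ (see \eqref{eq:def-prim-fermat}) and $e_{\mathbf{1}} = \prod_{i=1}^n e^1_{\io_i(\m_d)}$ one reads off that $e_{\mathbf{1}}$ is a subprojector of $e^1_{\io_1(\m_d)}$, hence $e_{\mathbf{1}}\cdot e_\prim = 0$; so $e_0 := 1 - e_\prim - e_{\mathbf{1}} \in \Q[\m_d^n]$ is an idempotent orthogonal to both, giving $(X, 1-e_\prim) = (X, e_{\mathbf{1}}) \oplus (X, e_0)$ in $\Chow(\k,\Q)$. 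I would next show $(X, e_0)$ is the zero object. Upon extending scalars to $\L$, every character of $\m_d^n$ takes values, and $(e_0)_\L = \sum_{\bchi\notin\mathfrak{X}_d^{(n)}\cup\{\mathbf{1}\}} e^\bchi$; hence $(X, e_0)_\L \simeq \bigoplus_{\bchi\notin\mathfrak{X}_d^{(n)}\cup\{\mathbf{1}\}} h(X)^\bchi$, and each summand vanishes by Proposition~\ref{f-mot}(iv). Thus $(X, e_0)_\L = 0$, and since $\Chow(\k,\Q)\to\Chow(\k,\L)$ is conservative (Lemma~\ref{lem:scalar-ext}), $(X, e_0)$ is already zero in $\Chow(\k,\Q)$; equivalently, $e_0$ maps to $0$ in $\CH_{n-1}(X\times X)_\L$, hence to $0$ in $\CH_{n-1}(X\times X)_\Q$ by faithful flatness.

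It remains to identify $(X, e_{\mathbf{1}}) = h(X)^{\mathbf{1}}$ with $\bigoplus_{i=0}^{n-1}\Q(i)$. Rather than invoke Proposition~\ref{f-mot}(i), which gives this only over $\L$, I would use the morphism \eqref{eq:Fd-Fd'} with $d'=1$,
\[ X = F_d^{(n)}\langle c\rangle \longrightarrow F_1^{(n)}\langle c\rangle, \qquad [u_0:\cdots:u_n]\mapsto [u_0^d:\cdots:u_n^d], \]
which is finite, generically Galois with group $\Ker(\m_d^n\to\m_1^n) = \m_d^n$, and $\m_d^n$-equivariant for the trivial action on the target. Applying Proposition~\ref{localization}(ii) with the trivial character (for which $\Q$ is already large enough as a coefficient field) yields $h(X)^{\mathbf{1}} \simeq h(F_1^{(n)}\langle c\rangle)$ in $\Chow(\k,\Q)$. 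Since $F_1^{(n)}\langle c\rangle = \{u_1+\cdots+u_n = cu_0\}\subset\P^n$ is a hyperplane, it is isomorphic to $\P^{n-1}$, and the standard decomposition $h(\P^{n-1}) \simeq \bigoplus_{i=0}^{n-1}\Q(i)$ completes the proof.

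I do not expect a genuine obstacle. The only step using the substance of the situation, rather than formal manipulation of idempotents or a standard geometric identification, is the descent from the $\L$-coefficient vanishing in Proposition~\ref{f-mot}(iv) back to $\Q$-coefficients, which is made possible precisely by the rationality of $e_\prim$ and $e_{\mathbf{1}}$ together with conservativity of scalar extension. The remaining verifications — the idempotent identities, the faithfulness of the $\m_d^n$-action (so that \eqref{eq:Fd-Fd'} with $d'=1$ exhibits $F_1^{(n)}\langle c\rangle$ as the quotient $X/\m_d^n$), and the decomposition of $h(\P^{n-1})$ — are routine.
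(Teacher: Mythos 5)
Your proof is correct, and its overall skeleton matches the paper's (whose proof is a one-line citation of Proposition \ref{f-mot} and Lemma \ref{lem:scalar-ext}): split $h(X)$ by the rational idempotents $e_\prim$, $e_{\mathbf 1}$, $e_0$, kill $(X,e_0)$ by extending scalars to $\L=\Q(\zeta_d)$, invoking Proposition \ref{f-mot}(iv), and descending by faithful flatness. Where you genuinely diverge is the trivial-character piece. The paper implicitly descends the isomorphism $h(X)^{\mathbf 1}_\L\simeq\bigoplus_i\L(i)$ of Proposition \ref{f-mot}(i) via Lemma \ref{lem:scalar-ext}; note that conservativity only upgrades a \emph{given} $\Q$-rational morphism to an isomorphism, so one must still exhibit a candidate over $\Q$ (e.g.\ the hyperplane-class projectors $\pi_i=\frac1d[L^i\times L^{n-i-1}]$ of Remark \ref{rem:Chow-Kunneth}), a point the paper leaves unsaid. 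Your substitute --- the degree-$d^n$ quotient $F_d^{(n)}\langle c\rangle\to F_1^{(n)}\langle c\rangle\simeq\P^{n-1}$ together with Proposition \ref{localization}(ii) for the trivial character --- sidesteps this entirely and produces the isomorphism directly with $\Q$-coefficients; it is the same device the paper uses for Proposition \ref{as-mot}(i). So your argument is, if anything, slightly more complete than the paper's at that one step; everything else (the idempotent identities, orthogonality of $e_{\mathbf 1}$ and $e_\prim$, and the descent of the vanishing) checks out.
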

\begin{proof}
This follows from Proposition \ref{f-mot} 
and Lemma \ref{lem:scalar-ext}.
\end{proof}

\begin{rmk}\label{rem:Chow-Kunneth}
Let $L$ be the class of the hyperplane section 
defined by the embedding $\iota\colon F_d^{(n)}\langle c \rangle \hookrightarrow \P^n$,
and define the objects of $\Chow(\k, \Q)$
for $i=0, \dots, 2n-2$ by
\[
h_i(F_d^{(n)}\langle c \rangle):=
\begin{cases}
(F_d^{(n)}\langle c \rangle,  \pi_i) 
& \text{if $i \not= n-1$},
\\
(F_d^{(n)}\langle c \rangle, e_\prim) 
& \text{if $i = n-1$ and $n$ is even},
\\
(F_d^{(n)}\langle c \rangle, \pi_{n-1} + e_\prim) 
& \text{if $i = n-1$ and $n$ is odd},
\end{cases}
\]
where
\[\pi_i := \frac{1}{d} [L^i \times L^{n-i-1}] \in \CH_{n-1}(F_d^{(n)}\langle c \rangle \times F_d^{(n)}\langle c \rangle)_\Q\]
is a projector of $h(F_d^{(n)}\langle c \rangle)$.
We have $(h(F_d^{(n)}\langle c\rangle),\pi_i) \subset h(F_d^{(n)}\langle c \rangle)^{\mathbf{1}}$
since $L$ is fixed by the $\m_d^n$-action.
It follows from Proposition \ref{f-mot} that
$h(F_d^{(n)}\langle c \rangle)=\bigoplus_{i=0}^{2n-2} h_i(F_d^{(n)}\langle c \rangle)$ 
is a Chow-K\"unneth decomposition of $F_d^{(n)}\langle c \rangle$
(\cite[Definition 6.1.1]{murre}).
Note also that, 
if a Weil cohomology theory $H^*$ satisfies the hard Lefschetz,
$e_\prim$ acts on $H^*(F_d^{(n)}\langle c \rangle)$ 
as the projection to the primitive part $\Ker(H^{n-1}(F_d^{(n)}\langle c \rangle) \os{-\cup L}{\lra} H^{n+1}(F_d^{(n)}\langle c \rangle)(1))$,
as is seen from a formula in  \cite[Proposition 1.4.7 (i)]{Kleiman}.
\end{rmk}

\begin{rmk}With no difficulty, we can generalize the results in this subsection to the general diagonal hypersurface
$c_1u_1^d+\cdots +c_n u_n^d=u_0^d$ ($c_i \in\k^*$). We restricted ourselves, however, to the situation as above, which will be needed in subsection \ref{ss7.2}.  
\end{rmk}

\section{Proof of Theorem \ref{thm1} \rm{(i)}}
In this section we assume $\k$ is a finite field
of characteristic $p$ and of order $q$, and $d$ is a positive divisor of $q-1$.
We will complete the proof of Theorem \ref{thm1} (i).

\subsection{Reduction to the case $n=2$} \label{sect:reduction}

We proceed by induction on $n$. 
The case $n=1$ follows immediately from Proposition \ref{twist}.
The case $n=2$ will be proved in the next subsection. 
Let $n \ge 3$. First assume that $\chi_{n-1}\chi_n\ne 1$,
Then we have by the case $n=2$ and the induction hypothesis 
\begin{align*}
\bigotimes_{i=1}^n h(A_d)^{\chi_i} &\simeq \left(\bigotimes_{i=1}^{n-2} h(A_d)^{\chi_i}\right) \ot h(A_d)^{\chi_{n-1}\chi_n} \ot 
h(F_d^{(2)})^{(\chi_{n-1},\chi_n)}
\\&\simeq h(A_d)^{\prod_{i=1}^n \chi_i} \ot h(F_d^{(n-1)})^{(\chi_1,\dots, \chi_{n-2},\chi_{n-1}\chi_n)} \ot h(F_d^{(2)})^{(\chi_{n-1},\chi_n)}. 
\end{align*}
By Proposition \ref{f-mot} (ii), the formula follows. 
Secondly, assume that $\chi_{n-1}\chi_n=1$.
Then we have by the induction hypothesis and Proposition \ref{as-dual}
\begin{align*}
\bigotimes_{i=1}^n h(A_d)^{\chi_i} &
\simeq 
\left(h(A_d)^{\prod_{i=1}^{n-2} \chi_i}
\ot h(F_d^{(n-2)})^{(\chi_1,\dots, \chi_{n-2})}\right)  \ot \L\langle -1 \rangle^{\chi_{n-1}}(1).
\end{align*}
By Proposition \ref{f-mot} (iii),  the theorem follows. 

\subsection{Proof of the case $n=2$} 

By Propositions \ref{as-deg} and \ref{f-deg}, we can suppose $d=q-1$, so that $\m_d=\k^*$. 
Here we need $\DM(\k,\L)$ from subsection \ref{sect:DM} to treat open varieties.  
We just write $A=A_{q-1}$, $F=F_{q-1}^{(2)}$. 
Let $A^\0 \subset A$ (resp. $F^\0 \subset F$)
be the affine open subscheme defined in \eqref{eq:def-AS}
(resp. by $u_0\ne 0$).
Write $A^\0F=A^\0 \times F$, $A^\0F^\0=A^\0\times F^\0$. 
Define a closed subscheme 
$\G \subset (A^\0)^2 \times A^\0F$ by
\[x_1+x_2=x, \quad u_0y_1=u_1y, \quad u_0y_2=u_2y.\]
Here, the coordinates of the $i$th factor of $(A^\0)^2$ 
are given by $(x_i, y_i)$ subject to the relation $x_i^q-x_i=y_i^{q-1}$.
Those of the first and second factors of $A^\0F$
are $(x, y)$ and $[u_0:u_1:u_2]$,
which are subject to the relations
$x^q-x=y^{q-1}$ and $u_1^{q-1}+u_2^{q-1}=u_0^{q-1}$,
respectively.
Let $\pr_1\colon \G \to (A^\0)^2$ and $\pr_2\colon \G \to A^\0F$ be the projections. 
Put
\[\G_1=\pr_1^{-1}((A^\0)^2\setminus Z), \quad 
\G_2=\pr_2^{-1}(A^\0F^\0),\]
where \[Z
:=\bigsqcup_{a \in \k}
\{ (x_i, y_i)_i \in (A^\0)^2 \mid x_1+x_2=a \} \subset (A^\0)^2.\]
Note that $\G_1 \subset \G$ is defined by $y\ne 0$. 
Since $u_0=0$ implies $u_1, u_2 \ne 0$, hence $y=0$, we have $\G_1 \subset \G_2$. 

Put $G_1=(\k \times \k^*)^2$, $G_2=(\k \times \k^*) \times (\k^*)^2$ and 
$G=(\k \times \k^*)^2 \times \k^*$. 
Let $\pi_1\colon G \to G_1$ be the first projection and define $\pi_2 \colon G \to G_2$ by 
\[\pi_2((a_i,m_i)_i,m)=((a_1+a_2,m), (m_im^{-1})_i). \] 
Then $G$ acts on $(A^\0)^2 \times A^\0F$ via $\pi_1\times \pi_2\colon G \to G_1 \times G_2$ and it respects $\G$, $\G_1$ and $\G_2$. 
Besides, $Z$ is stable under the action of $G_1$ on $A^2$.

\begin{lem}\label{lem:AS-F2}\ 
\begin{enumerate}
\item 
The singular locus of $\G$ is given by $y_1=y_2=y=u_0=0$ (geometrically $q^2(q-1)$ points).
In particular, $\G_1$ and $\G_2$ are non-singular. 
\item 
$\G_1$ is finite Galois over $(A^\0)^2\setminus Z$ with the Galois group $\k^* \subset G$ (embedded as the last component). 
\item
$\G_2$ is finite Galois over $A^\0F^\0$ with the Galois group $\k \subset G$
(embedded as the image of $\k \os{(\mathrm{id}, -\mathrm{id})}\lra \k^2 \subset G$). 
\end{enumerate}
\end{lem}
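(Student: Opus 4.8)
The plan is to verify the three assertions of Lemma \ref{lem:AS-F2} by explicit local computations in coordinates, exploiting the defining equations of $\G$ inside $(A^\0)^2 \times A^\0F$. Recall that $\G$ is cut out by $x_1+x_2=x$, $u_0y_1=u_1y$, $u_0y_2=u_2y$, together with the curve/surface relations $x_i^q-x_i=y_i^{q-1}$, $x^q-x=y^{q-1}$, $u_1^{q-1}+u_2^{q-1}=u_0^{q-1}$. First I would use $x=x_1+x_2$ to eliminate the variable $x$ entirely (and note that then $x^q-x=y^{q-1}$ becomes $(x_1+x_2)^q-(x_1+x_2)=y^{q-1}$, i.e. $y_1^{q-1}+y_2^{q-1}=y^{q-1}$ using the Artin--Schreier relations and additivity of $t\mapsto t^q-t$). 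So $\G$ is essentially the locus $\{((x_i,y_i)_i,[u_0:u_1:u_2],y) : u_0y_1=u_1y,\ u_0y_2=u_2y,\ y_1^{q-1}+y_2^{q-1}=y^{q-1}\}$, which already suggests why $\G$ relates the two sides.

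For part (i), I would work on the two standard affine charts of $\P^2$ meeting $A^\0 F$: on $\{u_0\ne 0\}$ we may set $u_0=1$, so that $y_1=u_1y$, $y_2=u_2y$ solve for $y_1,y_2$, and the remaining equation $u_1^{q-1}+u_2^{q-1}=1$ defines $F^\0$; here $\G\cap\{u_0\ne0\}=\G_2$ is visibly smooth (it fibers nicely over $F^\0$ with coordinates $y$ and then $x_i$ determined up to the separable Artin--Schreier relation — one checks the Jacobian of $x_i^q-x_i-y_i^{q-1}$ in $x_i$ is $-1\ne0$). On the chart $\{u_1\ne0\}$ (and symmetrically $\{u_2\ne0\}$), set $u_1=1$; then $u_0y_1=y$ and $u_0y_2=u_2y$, and when $u_0=0$ we are forced to $y=0$, hence $y_1^{q-1}+y_2^{q-1}=0$; combined with $u_2^{q-1}=-u_0^{q-1}=0$ this gives $u_2=0$, and then $y_1^{q-1}=-y_2^{q-1}$. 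The singular points should be exactly where the two branches $y=0$ and $u_0=0$ collide, i.e. $y_1=y_2=y=u_0=0$ (with $u_2=0$ on the relevant charts), and the count $q^2(q-1)$ comes from the choices of $(x_1,x_2)$ with $x_i^q-x_i=0$ (so $x_i\in\k$, $q^2$ choices) together with the $q-1$ points $[0:1:1]$-type... more precisely $u_2=0$ forces $[u_0:u_1:u_2]=[0:1:0]$, a single point, so I expect the count to come out as $q^2\cdot(q-1)$ after correctly accounting for the $y_1,y_2$ with $y_1^{q-1}+y_2^{q-1}=0$, $y_1$ (or rather the ratio) ranging over the $q-1$ solutions; I would pin down the exact bookkeeping in the writeup. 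Since $\G_1=\G\cap\{y\ne0\}$ avoids $y=0$ and $\G_2=\G\cap\{u_0\ne0\}$ avoids $u_0=0$, both omit the singular locus, giving smoothness.

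For parts (ii) and (iii), I would exhibit the covering maps directly. For (iii): the projection $\pr_2\colon \G_2 \to A^\0F^\0$ sends $((x_i,y_i)_i, (x,y), [1:u_1:u_2])$ to $((x,y),[1:u_1:u_2])$; given a point downstairs, $y_1=u_1y$ and $y_2=u_2y$ are determined, $x_1+x_2=x$ is one linear condition, and each $x_i$ must satisfy $x_i^q-x_i=y_i^{q-1}$, whose solution set is a torsor under $\k$ (the additive Artin--Schreier group); the single remaining relation $x_1+x_2=x$ cuts this $\k\times\k$-torsor down to a $\k$-torsor. Identifying which copy of $\k$ in $G$ acts, one checks $(a,-a)\in\k^2\subset G$ translates $x_1\mapsto x_1+a$, $x_2\mapsto x_2-a$, fixing $x=x_1+x_2$ and all other coordinates; this is free and transitive on the fibers away from a lower-dimensional locus, and étale because $d(x_i^q-x_i)/dx_i=-1$ is a unit, so $\pr_2|_{\G_2}$ is finite generically Galois with group $\k$. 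For (ii): $\pr_1\colon \G_1\to (A^\0)^2\setminus Z$ sends the point to $((x_i,y_i)_i)$; downstairs $x=x_1+x_2$ is determined, $y$ is determined up to a $(q-1)$st root by $y^{q-1}=y_1^{q-1}+y_2^{q-1}$ — and on the complement of $Z$ we have $x_1+x_2\notin\k$... hmm, actually $Z$ is where $x_1+x_2=a\in\k$, equivalently where $y=0$; so off $Z$, $y\ne0$ and $y^{q-1}=y_1^{q-1}+y_2^{q-1}$ is a nonzero element with exactly $q-1$ roots $y$, then $[u_0:u_1:u_2]=[y:y_1:y_2]$ is determined by $y$. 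Thus $\pr_1|_{\G_1}$ is a $\m_{q-1}=\k^*$-torsor, with $m\in\k^*\subset G$ (last component) scaling $y\mapsto m^{-1}y$ hence $[u_0:u_1:u_2]=[y:y_1:y_2]\mapsto[m^{-1}y:y_1:y_2]=[y:my_1:my_2]$ — I would double-check the sign/direction of the action against the definition of $\pi_2$ — and étaleness is clear since scaling is free. The main obstacle I anticipate is the careful local analysis at the point(s) over $u_0=0$ and the exact singular-point count in (i): one must treat the $\{u_1\ne0\}$ and $\{u_2\ne0\}$ charts, see that $\G$ there is locally cut out by equations whose Jacobian drops rank precisely along $y_1=y_2=y=u_0=0$, and count correctly. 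Everything else is routine torsor/étaleness bookkeeping once the coordinates are set up, and (ii), (iii) then feed directly into applying Proposition \ref{localization} in the subsequent argument.
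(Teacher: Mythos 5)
Your overall strategy coincides with the paper's: part (i) is left to a Jacobian computation (the paper gives no more detail than you do), and parts (ii) and (iii) are established by explicit coordinate work identifying $\pr_1\colon\G_1 \to (A^\0)^2 \setminus Z$ as a Kummer ($\m_{q-1}=\k^*$) covering via $y^{q-1}=(x_1+x_2)^q-(x_1+x_2)$ and $\pr_2\colon\G_2 \to A^\0F^\0$ as an Artin--Schreier ($\k$) covering in the variable $x_1$. The paper packages (ii) and (iii) slightly more efficiently: it writes $\G_1=\Spec R[y]/((x_1+x_2)^q-(x_1+x_2)-y^{q-1})$ and $\G_2=\Spec R[x_1]/(x_1^q-x_1-(t_1y)^{q-1})$ and observes that these are base changes of the standard one-variable Kummer and Artin--Schreier coverings $\Spec\k[s,s^{-1},y]/(s-y^{q-1})$ and $\Spec\k[s,x_1]/(x_1^q-x_1-s)$, which yields finiteness, the Galois property and the Galois group in one stroke. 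Your fiberwise torsor argument amounts to the same computation (note the torsor structure is free everywhere, so the hedge ``away from a lower-dimensional locus'' is unneeded, and the $\k^*$-action on the fiber of $\pr_1$ is $y\mapsto my$ by the definition of $\pi_2$).

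One concrete slip in your discussion of (i): on the chart $u_1=1$ the Fermat equation reads $1+u_2^{q-1}=u_0^{q-1}$, so $u_0=0$ forces $u_2^{q-1}=-1$, not $u_2=0$; the locus $\{u_0=0\}\subset F$ consists of the $q-1$ geometric points $[0:1:\zeta]$ with $\zeta^{q-1}=-1$. The count $q^2(q-1)$ comes from these $q-1$ points together with the $q^2$ choices of $(x_1,x_2)\in\k^2$ (since $y_1=y_2=0$ forces $x_i^q=x_i$), and not from a ratio of $y_1,y_2$, which both vanish on the singular locus. This does not affect the conclusions you actually need --- $\G_1$ avoids $y=0$ and $\G_2$ avoids $u_0=0$, so both miss the locus $y_1=y_2=y=u_0=0$ --- but the bookkeeping as written is incorrect and should be fixed in the final version.
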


\begin{proof}
(i) 
This follows from a straightforward computation of the Jacobian matrix.

(ii) 
We have $(A^\0)^2\setminus Z=\Spec R$,
where
\[ R=\k[x_i,y_i,((x_1+x_2)^q-(x_1+x_2))^{-1}]/(x_i^q-x_i-y_i^{q-1}) \]
and $\G_1 =\Spec R[y]/((x_1+x_2)^q-(x_1+x_2)-y^{q-1})$.
Note that on $\G_1$, $x=x_1+x_2$ and $u_i/u_0=y_i/y$. 
Therefore, $\G_1 \to (A^\0)^2\setminus Z$ is the base change by $\Spec R \to \Spec \k[s,s^{-1}]$ ($s=(x_1+x_2)^q-(x_1+x_2)$) 
of $\Spec \k[s,s^{-1},y]/(s-y^{q-1}) \to \Spec \k[s,s^{-1}]$, which has the desired property, and the assertion follows. 

(iii) 
We have $A^\0F^\0=\Spec R$, $R=\k[x,y,t_1,t_2]/(x^q-x-y^{q-1}, t_1^{q-1}+t_2^{q-1}-1)$ ($t_i=u_i/u_0$), and 
$\G_2=\Spec R[x_1]/(x_1^q-x_1-(t_1y)^{q-1})$. 
Note that on $\G_2$, we have $y_i=t_iy$, $x_2=x-x_1$ and $x_2^q-x_2=(x-x_1)^q-(x-x_1)=y^{q-1}-(t_1y)^{q-1}=y_2^{q-1}$. 
Therefore, $\G_2\to A^\0F^\0$ is the base change by  $\Spec R \to \Spec \k[s]$ ($s=(t_1y)^{q-1}$) 
of $\Spec \k[s,x_1]/(x_1^q-x_1-s) \to \Spec \k[s]$, which has the desired property, and the assertion follows. 
\end{proof}

\begin{ppn}
Let $\psi\in\wh\k\setminus\{1\}$, $\chi_1, \chi_2 \in \ck$ and put 
$\bchi=((\psi,\chi_i)_i, 1) \in \wh G = (\wh \k \times \wh \k^*)^2 \times \wh \k^*$.  
\begin{enumerate}
\item If one of $\chi_1, \chi_2$ is non-trivial, then $M(\G_1)^\bchi\simeq M(\G_2)^\bchi$. 
\item If none of $\chi_1, \chi_2, \chi_1\chi_2$ is trivial, then 
$M(\G_1)^\bchi \simeq M(A)^{(\psi,\chi_1)} \otimes M(A)^{(\psi,\chi_2)}$.  
\item 
If $\chi_1\chi_2\ne 1$, then
$M(\G_2)^\bchi \simeq M(A)^{(\psi,\chi_1\chi_2)} \ot M(F)^{(\chi_1,\chi_2)}$. 
\end{enumerate}
\end{ppn}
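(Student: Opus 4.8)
The plan is to derive each of (i)--(iii) from Proposition~\ref{localization} (Galois descent and localization), applied to the explicit geometry of $\G_1,\G_2$ supplied by Lemma~\ref{lem:AS-F2}; the only bookkeeping is to follow how the character $\bchi$ transforms under the two projections $\pi_1,\pi_2\colon G\to G_1,G_2$. The mechanism that makes everything close up is the dichotomy: a character equal to $\psi$ on two ``opposite'' additive slots is trivial on the corresponding antidiagonal copy of $\k$, while one equal to $(\chi_1,\chi_2)$ on two multiplicative slots restricts on the diagonal copy of $\m_d$ to $\chi_1\chi_2$. Parts (ii) and (iii) will be Galois-descent-plus-localization arguments via $\pi_1$ and $\pi_2$ respectively, while (i) uses only the open immersion $\G_1\hookrightarrow\G_2$.

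\emph{(iii).} By Lemma~\ref{lem:AS-F2}(iii), $\G_2\to A^\0F^\0$ is finite generically Galois with group the copy of $\k$ in $G$ spanned by $((a,1),(-a,1),1)$, on which $\bchi$ takes the value $\psi(a)\psi(-a)=1$; so Proposition~\ref{localization}(ii) gives $M(\G_2)^\bchi\simeq M(A^\0F^\0)^\bchi$. This subgroup is exactly $\ker\pi_2$, so $\bchi$ descends to a character of $G_2=(\k\times\k^*)\times(\k^*)^2$, which a direct check identifies as $(\psi,\chi_1\chi_2)\times(\chi_1,\chi_2)$; since $G_2$ acts factorwise on $A^\0F^\0=A^\0\times F^\0$, this yields $M(\G_2)^\bchi\simeq M(A^\0)^{(\psi,\chi_1\chi_2)}\ot M(F^\0)^{(\chi_1,\chi_2)}$. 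Finally I would pass to the smooth projective curves: $A\setminus A^\0=\{\infty\}$ is a fixed point, so $M(\{\infty\})^{(\psi,\chi_1\chi_2)}=0$ (because $\psi\ne1$) and localization gives $M(A^\0)^{(\psi,\chi_1\chi_2)}\simeq M(A)^{(\psi,\chi_1\chi_2)}$; likewise $F\setminus F^\0$ is the set of $q-1$ points of $F$ with $u_0=0$, on which $\m_d^2$ acts through $(m_1,m_2)\mapsto m_1^{-1}m_2$ with kernel the diagonal $\m_d$, so $M(F\setminus F^\0)^{(\chi_1,\chi_2)}=0$ precisely because $\chi_1\chi_2\ne1$, whence $M(F^\0)^{(\chi_1,\chi_2)}\simeq M(F)^{(\chi_1,\chi_2)}$. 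This proves (iii).

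\emph{(ii).} By Lemma~\ref{lem:AS-F2}(ii), $\G_1\to(A^\0)^2\setminus Z$ is finite generically Galois with group the last copy of $\m_d=\k^*$ in $G$, on which $\bchi$ is trivial, so Proposition~\ref{localization}(ii) identifies $M(\G_1)^\bchi$ with the $((\psi,\chi_1)\times(\psi,\chi_2))$-isotypic part of $M((A^\0)^2\setminus Z)$ (the character $\bchi$ induces on $G_1$). The open immersion $(A^\0)^2\setminus Z\hookrightarrow(A^\0)^2$ and the localization sequence then reduce (ii) to the vanishing of the corresponding isotypic motive of the complement $Z=\bigsqcup_{a\in\k}\{x_1+x_2=a\}$; granting it, $M((A^\0)^2)^{(\psi,\chi_1)\times(\psi,\chi_2)}=M(A^\0)^{(\psi,\chi_1)}\ot M(A^\0)^{(\psi,\chi_2)}\simeq M(A)^{(\psi,\chi_1)}\ot M(A)^{(\psi,\chi_2)}$ as in (iii). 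One wrinkle is that $Z$ is singular along the finite set $\{y_1=y_2=0\}$, so one stratifies $Z$ into this $G_1$-stable set of points and its smooth complement and applies localization in two steps; on each stratum, the transfer isomorphisms for the $\k$- and $\m_d$-quotients that occur (the latter from the Kummer relation $y_2^{q-1}=-y_1^{q-1}$) reduce the vanishing to that of $e^{\chi_1\chi_2}$ on motives on which $\m_d$ acts trivially, which holds since $\chi_1\chi_2\ne1$. This proves (ii).

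\emph{(i).} Here I would use $\G_1\hookrightarrow\G_2$ directly. On $\G_2$ one has $u_0\ne0$, and there $y=0$ forces $y_1=y_2=0$ and hence $x_1,x_2\in\k$; so the closed complement $\G_2\setminus\G_1$ (cut out by $y=0$) is $\bigsqcup_{(x_1,x_2)\in\k^2}F^\0$, a disjoint union of affine Fermat curves, smooth of pure codimension one in the smooth surface $\G_2$ (Lemma~\ref{lem:AS-F2}(i)); by the localization triangle it suffices to show $M(\G_2\setminus\G_1)^\bchi=0$. But $G$ permutes the $q^2$ copies through the two additive translations, while on each copy the three multiplicative slots of $G$ act through the Fermat torus $\m_d^2$ by $(m_1,m_2,m)\mapsto(m_1m^{-1},m_2m^{-1})$ with kernel the diagonal $\m_d$, on which $\bchi$ restricts to $\chi_1\chi_2$; as this is nontrivial by hypothesis the $\bchi$-isotypic part of $M(F^\0)$, hence of $M(\G_2\setminus\G_1)$, vanishes, proving (i). Assembling (i)--(iii) (and using the full faithfulness \eqref{eq:Chow-DM} to descend the resulting isomorphism to $\Chow(\k,\L)$) then gives the case $n=2$ of Theorem~\ref{thm1}(i). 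The substance of all three parts is the uniform character bookkeeping --- pinning down the Galois group of each cover inside $G$, checking $\bchi$ restricts trivially to it, and computing the descended characters --- after which every appeal to Proposition~\ref{localization} and to localization is essentially forced; the step I expect to require the most care is the singularity of $Z$ in part (ii), handled by the stratified two-step localization above.
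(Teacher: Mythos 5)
Your part (iii) reproduces the paper's argument essentially verbatim, and the skeleton of your (i) and (ii) (Galois descent along $\ker\pi_1$, $\ker\pi_2$ plus localization, with the character bookkeeping you describe) is also the paper's. Two steps, however, do not yet hold up.

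\emph{Part (i).} You kill $M(\G_2\setminus\G_1)^\bchi$ by restricting $\bchi$ to the diagonal kernel of $(\k^*)^3\to\m_{q-1}^2$ and invoking ``$\chi_1\chi_2\ne 1$ by hypothesis''. But the hypothesis of (i) is only that one of $\chi_1,\chi_2$ is non-trivial; it allows $\chi_1\chi_2=1$ with $\chi_1=\ol{\chi_2}\ne 1$, and there your argument fails: from the Gysin triangle for $\{u_0=0\}\subset F$ one gets $M(F^\0)^{(\chi_1,\chi_2)}\simeq M(\{u_0=0\})^{(\chi_1,\chi_2)}(1)[1]$, and the latter is a non-zero Artin motive precisely because the diagonal $\m_{q-1}$ fixes $\{u_0=0\}$ pointwise and $\chi_1\chi_2=1$; hence $M(\G_2\setminus\G_1)^\bchi\ne 0$. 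This is less your error than the paper's: its proof of (i) asserts that $(\k^*)^3$ acts trivially on $\G_2\setminus\G_1\simeq\underline{\k^2}\times F^\0$, whereas only the diagonal does (the action on the $F^\0$ factor is the full Fermat torus action, exactly as you describe), and in the excluded case the conclusion of (i) actually fails ($M(\G_2)^\bchi=0$ while $M(\G_1)^\bchi\simeq M(\G_2\setminus\G_1)^\bchi(1)[1]\ne 0$). Since (i) is only ever applied with $\bchi$ coming from $\mathfrak{X}_{q-1}^{(2)}$, the fix is simply to take $\chi_1\chi_2\ne 1$ as the operative hypothesis --- but you should say so explicitly rather than attribute it to the statement.

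\emph{Part (ii).} The entire difficulty of (ii) is the vanishing of the $(\psi,\chi_i)_i$-part of the motive of the smooth locus $Z\setminus\{y_1=y_2=0\}$ (the paper's $Z^*$), and your one-sentence justification --- that transfer isomorphisms ``reduce the vanishing to that of $e^{\chi_1\chi_2}$ on motives on which $\m_d$ acts trivially'' --- is not an argument: no non-trivial subgroup of $G_1$ acts trivially on $Z^*$, so the vanishing cannot be read off a trivially-acting subgroup. What is actually required (and what the paper does) is to base-change to the quadratic extension splitting $y_2^{q-1}=-y_1^{q-1}$, decompose $Z^*_K$ into the graphs of the automorphisms $(x,y)\mapsto(a-x,m\z y)$ of $A^*_K$, identify the stabilizer of each graph as the image of $\d(a',m')=((a',m'),(-a',m'))$, compute $((\psi,\chi_i)_i)\circ\d=(1,\chi_1\chi_2)$, and then invoke $M(A)^{(1,\chi_1\chi_2)}=0$ from Proposition \ref{as-mot} (iii) --- a motive on which $\m_d$ acts non-trivially, whose vanishing comes from $A/\k\simeq\P^1$. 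Your two-step stratification of $Z$ inside $(A^\0)^2$ (the finite singular set, where $(\k^*)^2$ does act trivially and $\chi_1,\chi_2\ne 1$ suffices, plus the smooth remainder) is a legitimate substitute for the paper's preliminary passage to $(A^*)^2$, but the substantive step just described is absent from your write-up and is where the hypothesis $\chi_1\chi_2\ne 1$ genuinely enters.
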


\begin{proof}
(i) The complement $\G_2\setminus \G_1$ is given by $y=y_1=y_2=0$, on which $(\k^*)^3 \subset G$ acts trivially and we have  $M(\G_2\setminus\G_1)^\bchi=0$. The result follows by Proposition \ref{localization} (i). 

(ii) 
Define $A^* \subset A^\0$  by $y\ne 0$. 
Then $Z^*:=Z \cap (A^*)^2$ is smooth over $\k$. 
First, we have by Proposition \ref{localization} (ii) and Lemma \ref{lem:AS-F2} (ii)
\[M(\G_1)^\bchi \simeq M((A^\0)^2\setminus Z)^{(\psi,\chi_i)_i}=M((A^*)^2\setminus Z^*)^{(\psi,\chi_i)_i}. \]
We will prove $M(Z^*)^{(\psi,\chi_i)_i} =0$. Then it follows by Proposition \ref{localization} (i) that
\[M((A^*)^2\setminus Z^*)^{(\psi,\chi_i)_i}\simeq M((A^*)^2)^{(\psi,\chi_i)_i}
=M(A^*)^{(\psi,\chi_1)} \ot M(A^*)^{(\psi,\chi_2)}. 
\]
Since $A\setminus A^*$ is fixed by the $\k^*$-action and $\chi_i \ne 1$, we have $M(A^*)^{(\psi,\chi_i)} \simeq M(A)^{(\psi,\chi_i)}$ by Proposition \ref{localization} (i), and the assertion follows. 

Now we prove $M(Z^*)^{(\psi,\chi_i)_i} =0$. 
Note that $(x_1+x_2)^q=x_1+x_2$ is equivalent to $y_1^{q-1}+y_2^{q-1}=0$. 
Let $K$ be a quadratic extension of $\k$ and write $X_K$ for $X \times \Spec K$. 
It suffices to prove $M(Z_K^*)^{(\psi,\chi_i)_i} =0$ by Proposition \ref{localization} (ii). 
Choose $\z \in K$ such that $\z^{q-1}=-1$. 
For $(a,m)\in\k\times\k^*$, let $f_{a,m}$ be the $K$-automorphism of $A_K^*$ defined by 
$f_{a,m}(x,y)=(a-x, m\z y)$, and $Z_{a,m}^* \subset (A^*)_K^2$ be its graph, regarded as a $\k$-scheme. 
Then, $Z_K^*=\bigsqcup_{(a,m)\in\k\times\k^*} Z_{a,m}^*$ and 
\begin{equation}\label{eq:Z-star}
M(Z_K^*)^{(\psi,\chi_i)_i}  \simeq \left( \bigoplus_{(a,m)} M(Z_{a,m}^*)\right)^{(\psi,\chi_i)_i} 
 \ \text{in $\DM(\k,\L)$}. 
\end{equation}
By the isomorphism 
\[A_K^*\simeq Z_{a,m}^*; \quad (x,y)\mapsto ((x,y),(f_{a,m}(x,y))),\] 
we have  
$\bigoplus_{(a,m)}M(A_K^*) \simeq M(Z_K^*)$. The action of $\k\times \k^*$ on $A_K^*$ and that of $G_1$ on $Z_K^*$ are compatible under the isomorphism as above and the homomorphism 
\[\d\colon \k\times \k^* \to G_1; \quad (a',m') \mapsto ((a',m'),(-a',m')).\] 
In particular, the action of $\Im(\delta) \subset G_1$ preserves 
the components $Z_{a, m}^*$ of $Z_K^*$.
Since $((\psi, \chi_i)_i) \circ \delta = (1, \chi_1 \chi_2)$ holds in $\wh \k \times \wh \k^*$,
the right hand side of \eqref{eq:Z-star}
is isomorphic to a subobject of $\bigoplus_{(a,m)} M(A_K^*)^{(1, \chi_1\chi_2)}$. 
Since $\chi_1 \chi_2 \not= 1$, we have $M(A_K^*)^{(1, \chi_1\chi_2)} \simeq M(A_K)^{(1, \chi_1\chi_2)}$ as above, and this is trivial by 
Proposition \ref{as-mot} (iii) and \eqref{eq:Chow-DM}.
It follows that $M(Z_K^*)^{(\psi,\chi_i)_i}=0$, as desired.

(iii) We have 
\[M(\G_2)^\bchi \simeq M(A^\0F^\0)^{((\psi,\chi_1\chi_2),(\chi_1,\chi_2))}
=M(A^\0)^{(\psi,\chi_1\chi_2)} \ot M(F^\0)^{(\chi_1,\chi_2)}\] 
by Proposition \ref{localization} (ii)  and Lemma \ref{lem:AS-F2} (iii). 
Since $\k^*$ (resp. the diagonal $\k^* \subset (\k^*)^2$) acts trivially on $A\setminus A^\0$ (resp. $F\setminus F^\0$) and $\chi_1\chi_2\ne 1$, we have $M(A^\0)^{(\psi,\chi_1\chi_2)}\simeq M(A)^{(\psi,\chi_1\chi_2)}$ (resp. $M(F^\0)^{(\chi_1,\chi_2)} \simeq M(F)^{(\chi_1,\chi_2)}$) 
by Proposition \ref{localization} (i), and the result follows.   
\end{proof}

\begin{proof}[Proof of Theorem \ref{thm1} (i)]
We are already reduced to the case $n=2$ in subsection \ref{sect:reduction}.
By the proposition, we have 
\[\bigotimes_{i=1}^2 M(A)^{(\psi,\chi_i)}  \simeq M(A)^{(\psi,\chi_1\chi_2)}\ot M(F)^{(\chi_1,\chi_2)}\]
for $(\chi_1,\chi_2) \in \mathfrak{X}_{q-1}^{(2)}$.
The theorem follows from the full faithfulness of 
\eqref{eq:Chow-DM}.
\end{proof}

\section{Frobenius endomorphisms}\label{s-fr}

We continue to assume $\k$ is a finite field
of characteristic $p$ and of order $q$, and $d$ is a positive divisor of $q-1$.
The following extends slightly Coleman's result \cite[Theorem A]{coleman}. 
He only considers the Artin-Schreier curves of the form $x^p-x=y^d$ over $\k$, 
so that only Gauss sums with additive characters factoring through the trace $\Tr_{\k/\F_p}$ are involved. 

\begin{ppn}\label{coleman-fr}\ 
\begin{enumerate}
\item If  $\infty \in A_d$ denotes the unique point at infinity,
then we have in $\CH_1(A_d\times A_d)$ 
\begin{align*}
[\Fr_{A_d}]& 
= [g_d] + q[A_d \times \infty] + (2q-1)[\infty \times A_d]. 
\end{align*}
\item 
If we put $Z_d^0=\{u_0=0\} \subset F_d^{(2)}$, then 
we have in $\CH_1(F_d^{(2)} \times F_d^{(2)})$ 
\begin{align*}
[\Fr_{F_d^{(2)}}]&
=[j_d^{(2)}] + 
\frac{q-1}{d}
\left([F_d^{(2)} \times Z_d^0] +2[Z_d^0 \times F_d^{(2)}]\right). 
\end{align*}
\end{enumerate}
\end{ppn}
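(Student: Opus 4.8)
The plan is to realize each equality directly in the Chow group of the product surface, by exhibiting a rational function whose principal divisor is the difference of the two sides. The guiding observation is that the Frobenius is a ``universal member'' of the relevant family of automorphisms: on $A_d$ one has $\Fr(x,y)=(x+y^d,\,y\cdot y^{q-1})$ with $y^{q-1}=(x^q-x)^{(q-1)/d}$, so the Frobenius graph and the automorphism graphs $\Gamma_{(m,m^{(q-1)/d})}$ ($m\in\k^*$) making up $g_d$ are governed by one relation; likewise on $F_d^{(2)}$ one has $x^q=x(1-y^d)^{(q-1)/d}$ and $y^q=y(1-x^d)^{(q-1)/d}$, relating $\Fr$ to the automorphisms $(m_1^{(q-1)/d},m_2^{(q-1)/d})$ with $m_1+m_2=1$, $m_i\in\k^*$.

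For (i), write $(x_i,y_i)$ for the affine coordinates on the two copies of $A_d$ and consider the rational function $R:=y_2-y_1(x_2-x_1)^{(q-1)/d}$ on $A_d\times A_d$. First I would identify the zero divisor of $R$ on $A_d^\circ\times A_d^\circ$: raising $R=0$ to the $d$th power and using the curve equations gives $(x_2^q-x_2)=(x_1^q-x_1)(x_2-x_1)^{q-1}$, which, with $u:=x_2-x_1$, factors as $(1-u^{q-1})\bigl((x_1^q-x_1)-u\bigr)=0$; so the affine zero locus of $R$ is exactly $\Gamma_{\Fr}\cup\bigcup_{m\in\k^*}\Gamma_{(m,m^{(q-1)/d})}$, the branch $u^{q-1}=1$ producing the automorphism graphs and the branch $x_2=x_1^q$ the Frobenius graph. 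Since $(q-1)/d$ is a positive integer, $R$ has poles only over the point at infinity; using the local parameter $t=x/y$ at $\infty_{A_d}$, where $v_\infty(x)=-d$ and $v_\infty(y)=-q$, one computes that $R$ has a pole of order $q$ along $A_d\times\infty$ and of order $2q-1$ along $\infty\times A_d$. The zero multiplicities are then forced to be $1$ by pairing $\operatorname{div}(R)\sim 0$ with a generic vertical and a generic horizontal fibre (using that $\pr_2|_{\Gamma_{\Fr}}$ is purely inseparable of degree $q$, hence contributes $q$, while each automorphism graph and each fibre contributes $1$). Rewriting $\operatorname{div}(R)=0$ gives (i).

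For (ii) I would run the same argument on $F_d^{(2)}\times F_d^{(2)}$, with $F:=F_d^{(2)}$ and affine coordinates $x_i^d+y_i^d=1$: construct the analogue of $R$ from the two identities above so that its affine zero divisor is $\Gamma_{\Fr}+\sum_{m_1+m_2=1,\,m_i\in\k^*}\Gamma_{(m_1^{(q-1)/d},m_2^{(q-1)/d})}$ (a graph arising from several $m_1$ with the same image being counted with the appropriate multiplicity, so as to match $j_d^{(2)}$), and then compute its poles along $F\times Z_d^0$ and $Z_d^0\times F$ from the local parameters at the $d$ points of $Z_d^0=\{u_0=0\}$, where $x$ and $y$ both have a simple pole. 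As a check on the shape of the answer, pairing the two sides of (ii) with a generic vertical, resp.\ horizontal, fibre already forces $q=-(q-2)+2(q-1)$, resp.\ $1=-(q-2)+(q-1)$, consistent with the coefficients $\tfrac{q-1}{d}$ and $\tfrac{2(q-1)}{d}$.

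The hardest part, in both cases, is the bookkeeping at infinity: matching the pole divisor of the explicit function with the precise integer multiple of each fibre class in the statement, and, for (ii), both pinning down the correct rational function and verifying the multiplicities of the (possibly repeated) automorphism graphs. An alternative route for (ii) would be to transport (i) through the correspondence $\Gamma\subset(A^\circ)^2\times A^\circ F$ used in the proof of Theorem~\ref{thm1}\,(i), which geometrically realizes $j(\chi_1,\chi_2)=g(\psi,\chi_1)g(\psi,\chi_2)/g(\psi,\chi_1\chi_2)$, using that the Frobenius correspondence commutes with all morphisms; but one would still have to account separately for the fibre contributions after projectivizing, so this is unlikely to be shorter.
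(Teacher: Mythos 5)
Your treatment of part (i) is correct and is essentially the paper's own argument: your function $R=y_2-y_1(x_2-x_1)^{(q-1)/d}$ is just $-y_1$ times the function $f=(x'-x)^{(q-1)/d}-y'/y$ used in the paper, the factorization $(1-u^{q-1})\bigl((x_1^q-x_1)-u\bigr)$ correctly locates the affine zero set (note that each branch, e.g.\ $x_2=x_1+m$, is itself a union of $d$ graphs $\Gamma_{(m,\zeta)}$ with $\zeta\in\mu_d$, and $R$ vanishes identically only on the one with $\zeta=m^{(q-1)/d}$ --- worth saying explicitly), and the pole orders $q$ and $2q-1$ follow from $v_\infty(x)=-d$, $v_\infty(y)=-q$. (Minor slip: $t=x/y$ has $v_\infty(t)=q-d$, so it is a uniformizer only for $d=q-1$; you never actually need a uniformizer.) Your determination of the multiplicities by intersecting with the two rulings is in substance the same as the paper's device of pushing the effective difference forward to $\P^1\times\P^1$ and using positivity of bidegrees.

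Part (ii) is where you have a genuine gap, and you acknowledge it: you produce neither the rational function nor a mechanism for the multiplicities that you yourself flag. For $d<q-1$ several pairs $(m_1,m_2)$ with $m_1+m_2=1$ induce the \emph{same} automorphism $(m_1^{(q-1)/d},m_2^{(q-1)/d})$, so that graph must occur in the zero divisor with multiplicity $>1$; no naive analogue of $R$ on $F_d^{(2)}\times F_d^{(2)}$ will do this. The paper's resolution is a two-step reduction that your sketch is missing. First, for $d=q-1$ one takes $f=1-\frac{u_0u_1'}{u_0'u_1}-\frac{u_0u_2'}{u_0'u_2}$ (in affine coordinates $1-x'/x-y'/y$): it vanishes on $\Gamma_{(m_1,m_2)}$ precisely when $m_1+m_2=1$, and on $\Gamma_{\Fr}$ because $x^{q-1}+y^{q-1}=1$ on the curve; its polar divisor is $F\times Z^0+Z^1\times F+Z^2\times F$, rationally equivalent to $F\times Z^0+2\,Z^0\times F$ since $\mathrm{div}(u_i/u_0)=Z^i-Z^0$. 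Second, for general $d$ one pushes the resulting divisor relation \eqref{eq:Coleman-F} forward along $F_{q-1}^{(2)}\times F_{q-1}^{(2)}\to F_d^{(2)}\times F_d^{(2)}$ (equivalently, replaces $f$ by its norm): with $N=(q-1)/d$, the graphs acquire the factor $N^2$ --- which is exactly what converts $j_{q-1}^{(2)}$ into $N^2 j_d^{(2)}$ with the repeated-graph multiplicities --- while the boundary terms acquire $N^3$, whence the coefficient $(q-1)/d$ after dividing by $N^2$. Without this reduction (or an equally explicit substitute) part (ii) is not proved. Your proposed alternative via the correspondence $\Gamma$ from the proof of Theorem \ref{thm1}(i) would at best recover the statement modulo the non-primitive eigenspaces, i.e.\ Corollary \ref{g-j-fr}, not the integral identity in $\CH_1(F_d^{(2)}\times F_d^{(2)})$ asserted in Proposition \ref{coleman-fr}(ii).
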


\begin{proof}
(i) Define $Z=\{y=0\} \subset A_d$
so that $\mathrm{div}(y)=Z-q\cdot \infty$. 
In particular, we have $[Z]=q[\infty]$ in $\CH_0(A_d)$.
Let $((x,y),(x',y'))$ be the coordinates of $A_d \times A_d$, 
and define a function on $A_d \times A_d$ as 
\[f=(x'-x)^\frac{q-1}{d}-\frac{y'}{y}.\]
By abuse of notation, we write the graph of a morphism by the same letter. 
We claim that 
\begin{equation}\label{eq:Coleman-AS}
\mathrm{div}(f) = \Fr_{A_d} +
\sum_{m \in \k^*} (m, m^{\frac{q-1}{d}})
- q(A_d \times \infty)
- (q-1)(\infty \times A_d)
- (Z \times A_d),
\end{equation}
from which the statement follows. 

Write $D$ for the left hand side minus the right hand side of \eqref{eq:Coleman-AS}. 
Using 
$\mathrm{div}(x)=d((0,0)- \infty)$
and
$\mathrm{div}(y)=Z-q\cdot \infty$,
it is straightforward to see that $D$ is effective.
To show $D=0$,
we consider the map
\[ \Pi \colon
\mathrm{Div}(A_d \times A_d) \overset{\pi_*}{\to} 
\mathrm{Div}(\P^1 \times \P^1)
\twoheadrightarrow \CH_1(\P^1 \times \P^1) = \Z \times \Z,
\]
where the first map is the push-forward along the self-product 
$\pi \colon A_d \times A_d \to A_1 \times A_1 \simeq \P^1 \times \P^1$
of \eqref{eq:Ad-Ad'}.
Observe that an effective divisor $E$ on $A_d \times A_d$ 
is trivial if $\Pi(E)=(0,0)$,
since we have a strict inequality $\Pi(C)>(0,0)$ 
for any integral curve $C$ on $A_d \times A_d$.
On the other hand, we compute
\begin{alignat*}{2}
&\Pi(\Fr_{A_d})=(d, qd),& &\Pi(g)=(d, d) \ (g \in \k \times \k^*),
\\
& \Pi(A_d \times \infty)=(d, 0), \quad & &\Pi(P \times A_d)=(0, d) \ (P \in Z \cup \{ \infty \}).
\end{alignat*}
It follows that $\Pi(D)=(0, 0)$.
This completes the proof of (i).

(ii) 
Put $Z_d^i=\{u_i=0\} \subset F_d^{(2)}$
so that $[Z_d^1]=[Z_d^2]=[Z_d^0]$ in $\CH_1(F_d^{(2)})$ since 
$\mathrm{div}(u_i)=Z_d^i-Z_d^0$. 

First, let $d=q-1$ and let $([u_i]_{i=0}^2, [u_i']_{i=0}^2)$ be the coordinates of $F_{q-1}^{(2)} \times F_{q-1}^{(2)}$. 
Define a function on $F_{q-1}^{(2)} \times F_{q-1}^{(2)}$ as 
$$f= 1-\frac{u_0u_1'}{u_0'u_1}-\frac{u_0u_2'}{u_0'u_2}.$$
By a similar argument as (i),
one verifies the equality of divisors
\begin{equation}\label{eq:Coleman-F}
\mathrm{div}(f)=\Fr_{F_{q-1}^{(2)}} - j_{q-1}^{(2)} -F_{q-1}^{(2)} \times Z_{q-1}^0 - Z_{q-1}^1\times F_{q-1}^{(2)}-Z_{q-1}^2 \times F_{q-1}^{(2)},
\end{equation}
from which the statement for $d=q-1$ follows.

For general $d$, 
we look at the image of the both sides of \eqref{eq:Coleman-F}
under the push-forward along
$F_{q-1}^{(2)} \times F_{q-1}^{(2)} \to F_d^{(2)} \times F_d^{(2)}$.
For the left hand side this is
the divisor of the norm of $f$ in $\k(F_d^{(2)} \times F_d^{(2)})$
(hence vanishes in $\CH_1(F_d^{(2)} \times F_d^{(2)})$),
while for the right this is
\[
N^2
\left(\Fr_{F_d^{(2)}} - j_d^{(2)} \right)
-
N^3
\left(
F_d^{(2)} \times Z_d^0+Z_d^1\times F_d^{(2)}+Z_d^2 \times F_d^{(2)}
 \right),
\]
where $N:=(q-1)/d$. We are done.
\end{proof}

\begin{cor}\label{g-j-fr}\ 
We have equalities of endomorphisms of 
$h(A_d)$ (resp. $h(F_d^{(2)})$) 
\[g_d \cdot e_\prim=\Fr_{A_d} \cdot e_\prim
\quad
(\text{resp.} \quad j_d^{(2)} \cdot e_\prim=\Fr_{F_d^{(2)}} \cdot e_\prim)
\]
in $\Chow(\k,\Q)$.
\end{cor}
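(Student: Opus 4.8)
The plan is to read off Corollary~\ref{g-j-fr} from Proposition~\ref{coleman-fr}: composing the displayed identities there with the projector $e_\prim$ should kill every term except the one involving $g_d$ (resp.\ $j_d^{(2)}$). So the only thing to prove is that the ``boundary'' correspondences occurring in Proposition~\ref{coleman-fr} are annihilated by $e_\prim$, i.e.\ that
\[
[A_d\times\infty]\cdot e_\prim = [\infty\times A_d]\cdot e_\prim = 0
\quad\text{in}\quad \End_{\Chow(\k,\Q)}(h(A_d)),
\]
and likewise $[F_d^{(2)}\times Z_d^0]\cdot e_\prim = [Z_d^0\times F_d^{(2)}]\cdot e_\prim = 0$ in $\End_{\Chow(\k,\Q)}(h(F_d^{(2)}))$; the same with $e_\prim$ composed on the other side. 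Granting this, distributing the formulas of Proposition~\ref{coleman-fr} over $e_\prim$ leaves exactly $[\Fr_{A_d}]\cdot e_\prim=[g_d]\cdot e_\prim$ and $[\Fr_{F_d^{(2)}}]\cdot e_\prim=[j_d^{(2)}]\cdot e_\prim$, which are the asserted identities.

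For the vanishing the key observation is that the point $\infty\in A_d$ is fixed by the $\k\times\m_d$-action, and the hyperplane section $Z_d^0=\{u_0=0\}\subset F_d^{(2)}$ is stable under the $\m_d^2$-action (as in Remark~\ref{rem:Chow-Kunneth}). A direct computation of composition of correspondences then shows that for every $g$ in the relevant group $G$ the graph $[\Gamma_g]$ of the corresponding automorphism satisfies
\[
[A_d\times\infty]\circ[\Gamma_g]=[A_d\times\infty],\qquad [\infty\times A_d]\circ[\Gamma_g]=[\infty\times A_d],
\]
and the analogous equalities for $[F_d^{(2)}\times Z_d^0]$, $[Z_d^0\times F_d^{(2)}]$ and for composition on the other side. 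Expanding $e_\prim$ as a sum of projectors $e^{\chi}=\tfrac1{|G|}\sum_{g\in G}\ol\chi(g)[\Gamma_g]$ over characters $\chi$ that are nontrivial on each relevant factor, it follows that composing any of these boundary cycles with $e_\prim$ gives $0$. Equivalently and more conceptually: taking $\infty$ (resp.\ the degree-one $0$-cycle class $\tfrac1d[Z_d^0]$) as the distinguished $0$-cycle, $[A_d\times\infty]$, $[\infty\times A_d]$ (resp.\ $\tfrac1d[F_d^{(2)}\times Z_d^0]$, $\tfrac1d[Z_d^0\times F_d^{(2)}]$) are precisely the Chow--K\"unneth projectors cutting out $h_0$ and $h_2$, which by Proposition~\ref{as-mot}~(i) (resp.\ Proposition~\ref{f-mot}~(i) together with Remark~\ref{rem:Chow-Kunneth}) are direct summands of $h(A_d)^{(1,1)}$ (resp.\ of $h(F_d^{(2)})^{\mathbf 1}$), hence orthogonal to $e_\prim$.

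I do not expect a real obstacle here; everything reduces to the elementary fact that the boundary divisors in Proposition~\ref{coleman-fr} are supported in the trivial-isotypic part of the motive. The only minor points to watch are the bookkeeping of the coefficients $q$, $2q-1$, $\tfrac{q-1}{d}$ (which become irrelevant once the corresponding cycles are killed) and checking from the definitions that $\infty$ and $Z_d^0$ are indeed fixed, respectively stable, under the group actions on $A_d$ and $F_d^{(2)}$.
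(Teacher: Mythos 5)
Your proposal is correct and is essentially the paper's own proof: the authors likewise deduce the corollary from Proposition \ref{coleman-fr} by noting that $[A_d\times\infty]\cdot e^{(\psi,\chi)}=[\infty\times A_d]\cdot e^{(\psi,\chi)}=0$ for $\psi,\chi$ nontrivial (and similarly for the $Z_d^0$ terms), which is exactly your observation that the boundary cycles live in the trivial-isotypic part because $\infty$ is fixed and $Z_d^0$ is stable. The only cosmetic difference is that the paper invokes Lemma \ref{lem:scalar-ext} to pass from the character decomposition over $\Q(\zeta_{pd})$ back to $\Q$-coefficients, a step you could also avoid by composing directly with the factorization $e_\prim=(1-e_\k^1)(1-e_{\m_d}^1)$.
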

\begin{proof}
(i) 
We have 
$[A_d \times \infty]\cdot e^{(\psi, \chi)}
=[\infty \times A_d] \cdot e^{(\psi, \chi)} =0$
if none of $\psi \in \wh\k, \chi \in \wh\k^*$ is trivial.
Hence the statement follows from the proposition by Lemma \ref{lem:scalar-ext}.
The proof of (ii) is similar.
\end{proof}

To prove Theorem \ref{thm2}, we prepare computations in group rings. 
Let 
\[\n\colon \Q[\m_d^n \times \m_d^2] \to \Q[\m_d^{n+1}],
\qquad
\rho\colon \Q[\m_d^n\times\m_d^2] \to \Q[\m_d^{n-1}\times \m_d]
\]
be the ring homomorphisms 
induced respectively by \eqref{eq:def-nu} 
and
\[\r((\x_1,\dots,\x_n),(\y_1,\y_2))= ((\x_1,\dots,\x_{n-1}),\y_1/\y_2). \]

\begin{lem}\label{l-j-ind}
For $\bchi=(\chi_1,\dots,\chi_{n+1}) \in \mathfrak{X}_d^{(n+1)}$,
let $\bchi^{(n)}$, $\bchi^{(2)}$ be as in Proposition \ref{f-inductive},
and put $\bchi^{(n-1)}=(\chi_1,\dots,\chi_{n-1})$.
\begin{enumerate}
\item
If $\chi_n\chi_{n+1}\ne 1$, then $\n(j_d^{(n)}e^{\bchi^{(n)}} \times j_d^{(2)}e^{\bchi^{(2)}})=j_d^{(n+1)}e^\bchi$. 
\item There exists an element $j_d^{(n,2)} \in \Q[\m_d^n\times\m_d^2]$ 
independent of $\bchi$,
such that $\n(j_d^{(n,2)})=j_d^{(n+1)}$ and 
\[\r(j_d^{(n,2)}) (e^{\bchi^{(n-1)}} \times e^{\chi_n})
=\chi_n((-1)^{\frac{q-1}{d}})q\left( (j_d^{(n-1)}e^{\bchi^{(n-1)}}) \times e^{\chi_n}\right)\]
unless $\bchi^{(n-1)}=\mathbf{1}$. 
\end{enumerate} 
\end{lem}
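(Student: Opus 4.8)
The plan is to reduce both statements to manipulations of the idempotents $e^{\bullet}$, using the scalar identity $j_d^{(m)}\,e^{(\chi_1,\dots,\chi_m)}=j(\chi_1,\dots,\chi_m)\,e^{(\chi_1,\dots,\chi_m)}$ (the Lemma in Section~2 at $c=1$) together with \eqref{e1}, plus a couple of elementary reindexings of the defining sums.

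For part (i), I would first note that $\n^{*}\bchi=(\bchi^{(n)},\bchi^{(2)})$ (immediate from \eqref{eq:def-nu}); since $\n$ is surjective this gives $\n(e^{\bchi^{(n)}}\ot e^{\bchi^{(2)}})=e^{\bchi}$. Rewriting $j_d^{(n)}e^{\bchi^{(n)}}=j(\bchi^{(n)})e^{\bchi^{(n)}}$ and $j_d^{(2)}e^{\bchi^{(2)}}=j(\bchi^{(2)})e^{\bchi^{(2)}}$, the left side of (i) becomes $j(\bchi^{(n)})j(\bchi^{(2)})\,e^{\bchi}$ and the right side $j(\bchi)\,e^{\bchi}$, so (i) is equivalent to
\[
j(\chi_1,\dots,\chi_{n-1},\chi_n\chi_{n+1})\,j(\chi_n,\chi_{n+1})=j(\chi_1,\dots,\chi_{n+1}).
\]
Because $\bchi\in\mathfrak{X}_d^{(n+1)}$ and $\chi_n\chi_{n+1}\ne1$, all characters occurring are non-trivial, so by \eqref{e1} each Jacobi sum becomes a ratio of Gauss sums and the factor $g(\psi,\chi_n\chi_{n+1})$ cancels. (Equivalently, one checks $\n(j_d^{(n)}\times j_d^{(2)})=j_d^{(n+1)}-E$ with $E$ supported on tuples with $w_n+w_{n+1}=0$, and $\bchi(E)=0$ because $\chi_n\chi_{n+1}\ne1$.)

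For part (ii), I would exhibit a concrete element. Put $N:=(q-1)/d$ and set
\[
j_d^{(n,2)}:=(-1)^{n}\sum_{\substack{w_1,\dots,w_{n+1}\in\k^{*}\\ w_1+\cdots+w_{n+1}=1}}\bigl((w_1^{N},\dots,w_{n-1}^{N},w_n^{N}),\,(1,(w_{n+1}/w_n)^{N})\bigr)\ \in\ \Z[\m_d^{n}\times\m_d^{2}],
\]
which does not involve $\bchi$. Applying $\n$ to a term replaces its last two coordinates by $w_n^{N}$ and $w_n^{N}(w_{n+1}/w_n)^{N}=w_{n+1}^{N}$, so $\n(j_d^{(n,2)})=(-1)^{n}\sum_{\sum w_i=1}(w_1^{N},\dots,w_{n+1}^{N})=j_d^{(n+1)}$; applying $\r$ to a term gives $\r(j_d^{(n,2)})=(-1)^{n}\sum_{\sum w_i=1}\bigl((w_1^{N},\dots,w_{n-1}^{N}),(w_n/w_{n+1})^{N}\bigr)$. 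Using $x\,e^{\omega}=\omega(x)e^{\omega}$ in a commutative group algebra, multiplying $\r(j_d^{(n,2)})$ by $e^{\bchi^{(n-1)}}\ot e^{\chi_n}$ produces $(-1)^{n}S$ times that idempotent, where
\[
S=\sum_{\substack{w_1,\dots,w_{n+1}\in\k^{*},\ \sum w_i=1}}\Bigl(\prod_{i=1}^{n-1}\chi_i(w_i^{N})\Bigr)\,\chi_n\bigl((w_n/w_{n+1})^{N}\bigr),
\]
and it remains to show $(-1)^{n}S=\chi_n((-1)^{(q-1)/d})\,q\,j(\bchi^{(n-1)})$. I would fix $w_1,\dots,w_{n-1}$, set $s:=1-\sum_{i<n}w_i$, and sum over $w_n,w_{n+1}\in\k^{*}$ with $w_n+w_{n+1}=s$: when $s=0$ the inner sum is $(q-1)\chi_n((-1)^{N})$ (each $w_n/w_{n+1}$ equals $-1$); when $s\ne0$, the substitution $r=w_n/w_{n+1}$ is a bijection onto $\k^{*}\setminus\{-1\}$, so the inner sum is $\sum_{r\in\k^{*}}\chi_n(r^{N})-\chi_n((-1)^{N})=-\chi_n((-1)^{N})$, using $\sum_{r\in\k^{*}}\chi_n(r^{N})=N\sum_{\z\in\m_d}\chi_n(\z)=0$ (valid since $\bchi\in\mathfrak{X}_d^{(n+1)}$ forces $\chi_n\ne1$). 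Hence $S=\chi_n((-1)^{N})\bigl((q-1)A-B\bigr)$ with $A=\sum_{\sum_{i<n}w_i=1}\prod_{i<n}\chi_i(w_i^{N})$ and $B=\sum_{\sum_{i<n}w_i\ne1}\prod_{i<n}\chi_i(w_i^{N})$. When $\bchi^{(n-1)}\ne\mathbf{1}$ some $\chi_i$ ($i<n$) is non-trivial, so $A+B=\prod_{i<n}\bigl(N\sum_{\z\in\m_d}\chi_i(\z)\bigr)=0$, giving $(q-1)A-B=qA$ and $S=q\,\chi_n((-1)^{N})A$. Since $A=(-1)^{n-2}j(\bchi^{(n-1)})$ by the definition of the Jacobi sum, the signs collapse, and $j_d^{(n-1)}e^{\bchi^{(n-1)}}=j(\bchi^{(n-1)})e^{\bchi^{(n-1)}}$ rewrites the answer in the stated form.

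The main obstacle is the evaluation of $S$: getting the inner character sum right in the two cases $s=0$ and $s\ne0$, legitimately discarding $\sum_{\z\in\m_d}\chi_n(\z)$ (because $\chi_n\ne1$) and the residual $A+B$ (because $\bchi^{(n-1)}\ne\mathbf{1}$), and reconciling the sign $(-1)^{n}$ in $j_d^{(n+1)}$ with the $(-1)^{n-2}$ in the definition of $j(\bchi^{(n-1)})$. Everything else — the reindexings computing $\n(j_d^{(n,2)})$ and $\r(j_d^{(n,2)})$, and the passage through Gauss sums in (i) — is routine.
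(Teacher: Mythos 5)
Your proof is correct, and in substance it follows the paper's argument. For part (ii) the two are essentially identical: your $j_d^{(n,2)}$ differs from the paper's choice (the paper puts the unit in the $n$th slot of the first factor and $(m_n^N,m_{n+1}^N)$ in the second, you put $w_n^N$ in the first factor and $(1,(w_{n+1}/w_n)^N)$ in the second), but both have the same image under $\n$ and, as it happens, the same image under $\r$, so the ensuing evaluation — the split into $s=0$ and $s\ne 0$, the vanishing of $\sum_{r}\chi_n(r^N)$ since $\chi_n\ne 1$, and the vanishing of $A+B$ since $\bchi^{(n-1)}\ne\mathbf{1}$ — is exactly the computation in the paper, and your sign bookkeeping checks out. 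For part (i) your primary route (reduce to the scalar identity $j(\bchi^{(n)})j(\bchi^{(2)})=j(\bchi)$ and cancel Gauss sums via \eqref{e1}) is a legitimate shortcut, since \eqref{e1} is an elementary identity already recorded in Section 2; the paper instead argues directly with the bijection $S_1^{(n)}\times S_1^{(2)}\to S_1^{(n+1)}\setminus(S_1^{(n-1)}\times S_0^{(2)})$, which is precisely your parenthetical alternative. The only reason to prefer the self-contained version is architectural: the paper later (Remark \ref{r-inv}) presents \eqref{e1} and \eqref{eq:j-induction} as consequences of the motivic statements, and that presentation reads more cleanly if the lemma feeding into Theorem \ref{thm2} does not itself invoke \eqref{e1}. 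Since you supply both arguments, nothing is missing.
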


\begin{proof}
(i) Put, for $c \in \k$, $S^{(n)}_c=\{(m_i)_i\in(\k^*)^n \mid \sum_{i=1}^n m_i=c\}$, so that 
\[j_d^{(n)}\langle c \rangle =(-1)^{n-1} \sum_{(m_i)_i\in S^{(n)}_c} (m_i^\frac{q-1}{d})_i\]
where 
$j_d^{(n-1)}\langle c \rangle$ is from \eqref{eq:tw-js}.  
Then, $\n$ induces a bijection
$S^{(n)}_1 \times S^{(2)}_1 \to S^{(n+1)}_1\setminus (S^{(n-1)}_1 \times S^{(2)}_0)$. 
By assumption, we have 
\[\sum_{(m_n,m_{n+1}) \in S^{(2)}_0} \chi_n(m_n^\frac{q-1}{d})\chi_{n+1}(m_{n+1}^\frac{q-1}{d})=
\chi_n(-1)^{\frac{q-1}{d}} \sum_{m\in\k^*} \chi_n\chi_{n+1}(m^\frac{q-1}{d})=0. 
\]
Hence 
$\sum_{(m_i)_i\in S^{(n-1)}_1 \times S^{(2)}_0}\bchi((m_i^\frac{q-1}{d})_i)=0$.  
One verifies easily $\n(e^{\bchi^{(n)}} \times e^{\bchi^{(2)}})=e^\bchi$. 
Now it follows 
\begin{align*}
& \n(j_d^{(n)}e^{\bchi^{(n)}} \times j_d^{(2)}e^{\bchi^{(2)}})
=\n(j_d^{(n)} \times j_d^{(2)})e^\bchi
\\&=\left(j_d^{(n+1)}-\sum_{(m_i)_i\in S^{(n-1)}_1 \times S^{(2)}_0} (m_i^\frac{q-1}{d})_i\right)e^\bchi = j_d^{(n+1)}e^\bchi.
\end{align*}

(ii) If we put
\[j_d^{(n,2)}=(-1)^n \sum_{(m_i)_i\in S^{(n+1)}_1}\left((m_1^\frac{q-1}{d},\dots,m_{n-1}^\frac{q-1}{d},1),(m_n^\frac{q-1}{d},m_{n+1}^\frac{q-1}{d})\right),\]
then $\n(j_d^{(n,2)})=j_d^{(n+1)}$. 
On the other hand, 
$\r(j_d^{(n,2)})= \sum_{s,t\in\k,s+t=1} j_d^{(n-1)}\langle s \rangle \times h(t)$,
\begin{align*}
& h(t):=\sum_{(k,l) \in S^{(2)}_t}[(k/l)^\frac{q-1}{d}]=\begin{cases}
(q-1)[(-1)^\frac{q-1}{d}] & \text{if $t=0$}, \\ 
\sum_{m\in\k^*} [m^\frac{q-1}{d}] -[(-1)^\frac{q-1}{d}] & \text{if $t\ne 0$}. 
\end{cases}
\end{align*}
Here we write $[\z] \in\Q[\m_d]$ for the image of $\z\in\m_d$. 
Since $\chi_n\ne 1$, 
\[\sum_{m\in\k^*}[m^\frac{q-1}{d}] e^{\chi_n}=\frac{q-1}{d} \sum_{\z\in\m_d}[\z]e^{\chi_n}=\frac{q-1}{d} \sum_{z\in\m_d}\chi_n(\z) e^{\chi_n}=0.\] 
Since $\bchi^{(n-1)}\ne \mathbf{1}$, we have 
\[\sum_{s \in \k} j_d^{(n-1)}\langle s \rangle e^{\bchi^{(n-1)}}=(-1)^n\sum_{(m_i)_i\in(\k^*)^{n-1}} (m_i^\frac{q-1}{d})_i \cdot e^{\bchi^{(n-1)}}=0.\] 
Therefore, only the term with $(s,t)=(1,0)$ remains and 
\begin{align*}
 \r(j_d^{(n,2)}) (e^{\bchi^{(n-1)}} \times e^{\chi_n})
& = j_d^{(n-1)}e^{\bchi^{(n-1)}} \times q[(-1)^{\frac{q-1}{d}}]e^{\chi_n}
\\& = j_d^{(n-1)}e^{\bchi^{(n-1)}}  \times q\chi_n((-1)^{\frac{q-1}{d}})e^{\chi_n}.
\end{align*}
The proof is complete. 
\end{proof}

\begin{proof}[Proof of Theorem \ref{thm2}]
First, we prove the case $c=1$ by induction on $n$. 
The case $n=2$ follows from Corollary \ref{g-j-fr}.  
It suffices to prove $j_d^{(n+1)}\cdot e^\bchi = \Fr_{F_d^{(n+1)}} \cdot e^\bchi$ 
for each 
 $\bchi=(\chi_1,\dots, \chi_{n+1}) \in \mathfrak{X}_d^{(n+1)}$ ($n\ge 2$). 
First, assume that $\chi_n\chi_{n+1}\ne 1$. Then by Proposition \ref{f-mot} (ii), we have an isomorphism 
\[h(F_d^{(n)})^{\bchi^{(n)}} \ot h(F_d^{(2)})^{\bchi^{(2)}}\os\simeq\lra h(F_d^{(n+1)})^\bchi. \]
Since any morphism in $\Chow(\k, \Lambda)$ commutes with
the Frobenius endomorphisms,
the assertion follows by the induction hypothesis and Lemma \ref{l-j-ind} (i). 
Secondly, assume that $\chi_n\chi_{n+1}=1$ (so $\bchi^{(n-1)}\ne \mathbf{1}$). 
Then by Proposition \ref{f-mot} (iii), we have an isomorphism
\[h(F_d^{(n-1)})^{\bchi^{(n-1)}} \ot \L\langle -1 \rangle^{\chi_n}(1) \os\simeq\lra h(F_d^{(n+1)})^\bchi,\]
Here we used the identification $Z \simeq F_d^{(n-1)} \times F_d^{(1)}\langle -1 \rangle$ 
given by \eqref{eq:def-Z-bu}, under which 
the actions of $\m_d^n \times \m_d^2$ and $\m_d^{n-1} \times\m_d$ are compatible via the homomorphism $\rho$. 
Since the Frobenius acts on $\L\langle -1 \rangle^{\chi_n}(1)$ as the multiplication by $\chi_n \circ \mathrm{Km}(-1)q$ by definition, 
the assertion follows by the induction hypothesis and Lemma \ref{l-j-ind} (ii). 

The general case is reduced to the previous case using Proposition \ref{twist}. 
Note that the Frobenius acts on $\L\langle c \rangle^{\prod_{i=1}^n\chi_i}$ as the multiplication by 
\[\prod_{i=1}^n \chi_i(\mathrm{Km}(c)(\Fr))=\prod_{i=1}^n \chi_i(c^{\frac{q-1}{d}})=\bchi(c^{\frac{q-1}{d}},\dots, c^{\frac{q-1}{d}}),\] 
and that $j_d^{(n)}\langle c \rangle = (c^{\frac{q-1}{d}},\dots, c^{\frac{q-1}{d}})j_d^{(n)}$. 
Hence the proof of Theorem \ref{thm2} is complete. 
\end{proof}

The following corollary follows immediately by Lemma \ref{lem:scalar-ext}. 
\begin{cor}\label{cor:thm2}
For any $c\in \k^*$, we have 
\[j_d^{(n)}\langle c\rangle \cdot e_\prim = \Fr_{F_d^{(n)}\langle c \rangle} \cdot e_\prim\]
in $\End_{\Chow(\k, \Q)}(h(F_d^{(n)}\langle c \rangle)$. 
\end{cor}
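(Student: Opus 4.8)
The plan is to descend Theorem \ref{thm2} along the scalar extension functor $\Chow(\k,\Q)\to\Chow(\k,\Q(\zeta_d))$. First I would observe that $e_\prim$ lies in $\Q[\m_d^n]$ by \eqref{eq:def-prim-fermat}, so both $j_d^{(n)}\langle c\rangle\cdot e_\prim$ and $\Fr_{F_d^{(n)}\langle c\rangle}\cdot e_\prim$ are already honest endomorphisms of $h(F_d^{(n)}\langle c\rangle)$ in $\Chow(\k,\Q)$, and the claim is the vanishing of the single morphism $(j_d^{(n)}\langle c\rangle-\Fr_{F_d^{(n)}\langle c\rangle})\cdot e_\prim$ in that category. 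Here $\Fr_{F_d^{(n)}\langle c\rangle}$ commutes with $e_\prim$ because it commutes with every morphism (as recalled in the introduction), and $j_d^{(n)}\langle c\rangle$ commutes with $e_\prim$ because both are images of elements of the commutative ring $\Q[\m_d^n]$.

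Next I would record that the proof of Lemma \ref{lem:scalar-ext} in fact yields more than conservativity: since $\Q(\zeta_d)$ is faithfully flat over $\Q$ and the $\Hom$-groups in $\Chow$ are (direct summands, cut out by $\Q$-rational idempotents, of) Chow groups that commute with this base change, the natural map
\[\Hom_{\Chow(\k,\Q)}(M,N)\lra\Hom_{\Chow(\k,\Q(\zeta_d))}(M_{\Q(\zeta_d)},N_{\Q(\zeta_d)})\]
is injective for all $M,N$. Therefore it suffices to prove the asserted identity after extending scalars to $\Q(\zeta_d)$.

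Over $\Q(\zeta_d)$ I would use the decomposition $e_\prim=\sum_{\bchi\in\mathfrak{X}_d^{(n)}}e^\bchi$ of \eqref{eq:def-prim-fermat}. As both $j_d^{(n)}\langle c\rangle$ and $\Fr_{F_d^{(n)}\langle c\rangle}$ commute with each idempotent $e^\bchi$, it is enough to check $j_d^{(n)}\langle c\rangle\cdot e^\bchi=\Fr_{F_d^{(n)}\langle c\rangle}\cdot e^\bchi$ for each single $\bchi\in\mathfrak{X}_d^{(n)}$; viewing this as an equality of endomorphisms of the direct factor $h(F_d^{(n)}\langle c\rangle)^\bchi$, it is precisely Theorem \ref{thm2}. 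Summing over $\bchi$ gives the equality over $\Q(\zeta_d)$, and the injectivity of the previous paragraph then gives it over $\Q$. There is no genuine obstacle in this argument; its entire content sits in Theorem \ref{thm2}, and the only point deserving care is that the base-change map on morphism groups is injective, not merely that scalar extension reflects isomorphisms.
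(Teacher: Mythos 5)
Your proposal is correct and follows essentially the same route as the paper, which simply says the corollary "follows immediately by Lemma \ref{lem:scalar-ext}," i.e.\ extend scalars to $\Q(\zeta_d)$, decompose $e_\prim=\sum_{\bchi\in\mathfrak{X}_d^{(n)}}e^{\bchi}$, apply Theorem \ref{thm2} summandwise, and descend. Your remark that one needs injectivity (faithfulness) of the scalar-extension map on $\Hom$-groups, rather than the conservativity stated in Lemma \ref{lem:scalar-ext}, is a valid refinement, and it is exactly what the faithful-flatness argument in the proof of that lemma provides.
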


\begin{cor}\label{cor:motivic-GJ}
Under the isomorphism of Theorem \ref{thm1} (i), we have for $(\chi_1,\dots, \chi_n)\in\mathfrak{X}_d^{(n)}$
\[ \bigotimes_{i=1}^n \left(g_d \cdot e^{(\psi,\chi_i)}\right) = \left(g_d \cdot e^{(\psi, \prod_{i=1}^n \chi_i)} \right) \otimes \left( j_d^{(n)} \cdot e^{(\chi_1,\dots, \chi_n)} \right)
\]
in $\End(h(A_d)^{\otimes n})=\End(h(A_d) \otimes h(F_d^{(n)}))$. 
\end{cor}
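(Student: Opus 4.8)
The plan is to observe that, granted the main theorems, every endomorphism occurring in the statement is a Frobenius endomorphism, and then to invoke the compatibility of Frobenius with tensor products together with its centrality. Recall first that the Frobenius endomorphism is central in $\Chow(\k,\L)$, i.e. it commutes with every morphism (\cite[p.~80]{Kleiman2}, \cite[Proposition~2]{Soule}); in particular it commutes with every projector, so that for a direct summand $(X,e)$ of $h(X)$ cut out by a projector $e$ (with coefficients in $\Q(\zeta_{pd})$, say) the Frobenius endomorphism of $(X,e)$ is simply $\Fr_X\cdot e$. Now Corollary~\ref{g-j-fr} gives the identity of correspondences $g_d\cdot e_\prim = \Fr_{A_d}\cdot e_\prim$, which remains valid after extending scalars to $\Q(\zeta_{pd})$; since $e^{(\psi,\chi_i)} = e_\prim\cdot e^{(\psi,\chi_i)}$ (as $\psi\ne 1$ and $\chi_i\ne 1$), it follows that $g_d\cdot e^{(\psi,\chi_i)} = \Fr_{A_d}\cdot e^{(\psi,\chi_i)}$, i.e. each factor on the left hand side is the Frobenius endomorphism of $h(A_d)^{(\psi,\chi_i)}$. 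The same reasoning shows $g_d\cdot e^{(\psi,\prod_{i=1}^n\chi_i)}$ is the Frobenius endomorphism of $h(A_d)^{(\psi,\prod_{i=1}^n\chi_i)}$, while Theorem~\ref{thm2} (with $c=1$) identifies $j_d^{(n)}\cdot e^{(\chi_1,\dots,\chi_n)}$ with the Frobenius endomorphism of $h(F_d^{(n)})^{(\chi_1,\dots,\chi_n)}$.

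Next I would use that Frobenius respects the tensor structure: for $M,N\in\Chow(\k,\L)$ one has $\Fr_{M\ot N} = \Fr_M\ot\Fr_N$, since $\Fr_{X\times Y} = \Fr_X\times\Fr_Y$ gives $\Gamma_{\Fr_{X\times Y}} = \Gamma_{\Fr_X}\times\Gamma_{\Fr_Y}$, and $(f\ot g)\circ(f'\ot g') = (f\circ f')\ot(g\circ g')$. Consequently the left hand side of the asserted identity is the Frobenius endomorphism of $\bigotimes_{i=1}^n h(A_d)^{(\psi,\chi_i)}$, and the right hand side is the Frobenius endomorphism of $h(A_d)^{(\psi,\prod_{i=1}^n\chi_i)}\ot h(F_d^{(n)})^{(\chi_1,\dots,\chi_n)}$. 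Since the isomorphism $\phi$ of Theorem~\ref{thm1}~(i) between these two objects commutes with Frobenius by centrality, it carries the first Frobenius endomorphism to the second; this is precisely the claimed equality.

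The argument is entirely formal, and I do not expect any real obstacle. The only points requiring a word of care are bookkeeping ones: Corollary~\ref{g-j-fr}, Theorem~\ref{thm2}, and the centrality and tensor-compatibility statements are all phrased either with $\Q$-coefficients or for $h(X)$ rather than for an idempotent-cut summand with $\Q(\zeta_{pd})$-coefficients. Passing to the relevant coefficient field and direct factors is immediate, because all the statements involved are identities of algebraic cycles, stable both under scalar extension (Lemma~\ref{lem:scalar-ext} being more than enough here) and under the operation $\alpha\mapsto e\circ\alpha\circ e$ of restricting an endomorphism to a summand.
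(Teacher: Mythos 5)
Your proof is correct and follows essentially the same route as the paper: both identify each factor with the relevant Frobenius endomorphism via Corollary \ref{g-j-fr} and Theorem \ref{thm2}, and then use the compatibility of the isomorphism of Theorem \ref{thm1} (i) with Frobenius. You merely spell out the tensor-compatibility and coefficient/projector bookkeeping that the paper leaves implicit.
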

\begin{proof}
Since the isomorphism of Theorem \ref{thm1} (i) is compatible with the Frobenius endomorphisms, the statement follows by Theorem \ref{thm2} and Corollary \ref{g-j-fr}. 
\end{proof}

\begin{rmk}\label{r-inv}
Proposition \ref{coleman-fr} (i) shows that 
the Frobenius acts on $h(A_d)_\prim$ as $g_d$,
and hence on $h(A_d)^{(\psi,\chi)}$ by the multiplication by $g(\psi,\chi)$
if $\psi \in \wh\k$ and $\chi \in \wh\m_d$ are non-trivial.
From this fact, 
together with Proposition \ref{as-dual},
we can deduce \eqref{gauss-reflection} and \eqref{gauss-bar}.
Similarly, 
we can deduce \eqref{e1}, \eqref{e1-reflextion}, and
for $(\chi_1, \dots, \chi_n) \in \mathfrak{X}_d^{(n)}$
\begin{equation}
\label{eq:j-induction}
j(\chi_1, \dots, \chi_n) =
\begin{cases}
j(\chi_1, \dots, \chi_{n-2}, \chi_{n-1}\chi_n)
j(\chi_{n-1},\chi_n) 
& \text{if} \ \chi_{n-1} \chi_n \not= 1,\\
j(\chi_1, \dots, \chi_{n-2})\chi_{n-1}((-1)^{\frac{q-1}{d}})q
& \text{if} \ \chi_{n-1} \chi_n = 1\\
\end{cases}
\end{equation}
from Corollary \ref{cor:motivic-GJ}, \eqref{j-reflextion-mot},
and Proposition \ref{f-mot} (ii), (iii), respectively.

The invertibility (Proposition \ref{f-mot}) implies that, 
for any Weil cohomology theory $H$ whose coefficient field contains $\mu_d$, 
the $\bchi$-eigenspace $H(F_d^{(n)})^{\bchi}$ for $\bchi \in \mathfrak{X}_d^{(n)}$
is one-dimensional
and
the Frobenius acts on this space as the multiplication by $j(\bchi)$
by Theorem \ref{thm2}.
Classically, 
this fact is proved by point counting and the Grothendieck-Lefschetz trace formula.
We have given a purely motivic proof that avoids the use of Weil cohomology.
A similar remark applies to $H(A_d)^{(\psi,\chi)}$.
\end{rmk}

\begin{rmk}If $\psi, \psi' \in \wh\k$ are nontrivial, there exists a unique $c\in\k^*$ such that $\psi'(x)=\psi(cx)$ for any $x\in \k$. This follows from the non-degeneracy of the paring $\k^2 \to \F_p$; $(c,x)\mapsto \Tr_{\k/\F_p}(cx)$. One shows easily 
\begin{equation}\label{psi-psi'}
g(\psi',\chi)=\ol\chi(c^\frac{q-1}{d}) g(\psi,\chi).
\end{equation}
This identity is interpreted motivically as follows. Let $A_d\langle c\rangle$ be defined by $x^q-x=cy^d$. Let $\k\times\k^*$ act on this as before and $h(A_d\langle c \rangle)^{(\psi,\chi)}$ be the associated motive (see subsection \ref{s4.1}).  Then the isomorphism $A_d \to A_d\langle c \rangle$; $(x,y)\mapsto (cx,y)$ induces an isomorphism
\[h(A_d)^{(\psi',\chi)} \simeq h(A_d\langle c \rangle)^{(\psi,\chi)}\] 
of motives. The Gauss sum element $g_d$ acting on $h(A_d)$ corresponds to the element 
\[-\sum_{m\in\k^*} (cm, m^\frac{q-1}{d})
= -\sum_{m\in\k^*} (m, (c^{-1}m)^\frac{q-1}{d})\]
acting on $h(A_d\langle c \rangle)$, hence \eqref{psi-psi'} follows by the argument in Remark \ref{r-inv}. 
\end{rmk}

\section{Davenport-Hasse relations}

\subsection{Base-change formula}

Let $\k$ be a finite field of characteristic $p$ and of order $q$ and $K$ be a degree $r$ extension of $\k$.  
To avoid the confusion, we write the Gauss and Jacobi sums over $\k$ as $g_\k(\psi,\chi)$ and $j_\k(\chi_1,\dots, \chi_n)$, and similarly over $K$. 
For $\psi \in\wh\k$, let 
\[\psi_K=\psi\circ \Tr_{K/\k} \in \wh K\] be the lifted character. 
For $\psi \in \wh\k$, $\psi \ne 1$ and $\chi, \chi_1,\dots, \chi_n \in \wh\m_d$, $(\chi_1,\dots, \chi_n)\ne(1,\dots, 1)$, 
we have the classical Davenport-Hasse base-change formulas \cite{d-h}
\begin{align}
&g_K(\psi_K,\chi)=g_\k(\psi,\chi)^r, \label{g-dh}
\\&j_K(\chi_1,\dots,\chi_n)=j_\k(\chi_1,\dots, \chi_n)^r.\label{j-dh}
\end{align}
The latter follows from the former by \eqref{e1}. 
See \cite{weil} for an elementary proof and \cite[Sommes trig., 4.12]{deligne} for an $l$-adic sheaf-theoretic proof. 

We give their motivic proofs. 
Let  $\lambda\colon\Spec K\to\Spec\k$ be the structure morphism. 
Then $\lambda$ induces a functor \[\lambda^*\colon \Chow(\k,\L) \to \Chow(K,\L)\] such that 
$\lambda^*(h(X))=h(X_K)$, where $X_K:=X\times_{\Spec \k}\Spec K$. 

Define $g_{d,K/\k} \in \End(h(A_{d,K}))$ by the element
\[- \sum_{m \in K^*} (\Tr_{K/\k}(m), m^{\frac{q^r-1}{d}}) \ \in \Q[\k \times \m_d],\] 
and for $c \in K^*$, $j_{d,K}^{(n)}\langle c \rangle \in \End(h(F_{d,K}^{(n)}\langle c \rangle)$ by the element (as before)
\[(-1)^{n-1}\sum_{m_1,\dots, m_n\in K^*, \sum_{i=1}^nm_i=c} (m_1^\frac{q^r-1}{d},\dots, m_n^\frac{q^r-1}{d}) \in \Q[\m_d^n].\]
Let $e_\prim \in \End(h(A_{d,K}))$ (resp. $e_\prim \in \End(h(F_{d,K}^{(n)}\langle c \rangle)$) 
be the projector onto the primitive part with respect to the $\k \times \m_d$-action (resp. $\m_d^n$-action). 

\begin{thm}\label{dh-1}\
We have equalities of endomorphisms of 
$h(A_{d,K})$ (resp. $h(F_{d,K}^{(n)}\langle c \rangle)$)
\[
\lambda^*(g_{d})^r \cdot e_\prim=g_{d,K/\k} \cdot e_\prim
\quad
(\text{resp.} \quad \lambda^*(j_{d}^{(n)}\langle c \rangle)^r \cdot e_\prim=j_{d,K}^{(n)}\langle c \rangle \cdot e_\prim)
\]
in $\Chow(K,\Q)$.
\end{thm}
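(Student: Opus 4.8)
The plan is to deduce both equalities from the fact that the two sides each compute the Frobenius endomorphism on the primitive part. Write $\Fr$ for the $q^r$-power Frobenius endomorphism over $K$. It suffices to prove in $\Chow(K,\Q)$ the two identities
\[
(\mathrm{A})\colon\ \lambda^*(g_d)^r\cdot e_\prim=\Fr_{A_{d,K}}\cdot e_\prim,
\qquad
(\mathrm{B})\colon\ g_{d,K/\k}\cdot e_\prim=\Fr_{A_{d,K}}\cdot e_\prim,
\]
together with their Fermat analogues; combining $(\mathrm{A})$ and $(\mathrm{B})$ yields the theorem.

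For $(\mathrm{A})$ I would apply the tensor functor $\lambda^*$ to Corollary \ref{g-j-fr} (resp.\ Corollary \ref{cor:thm2}). As $\lambda^*$ carries the projector $e_\prim$ to $e_\prim$ and carries $\Fr_{A_d}$ to the base change of the $q$-power Frobenius, it gives $\lambda^*(g_d)\cdot e_\prim=\lambda^*(\Fr_{A_d})\cdot e_\prim$. Since $g_d$ commutes with $e_\prim$ (both lie in the commutative ring $\Q[\k\times\m_d]$) and the Frobenius commutes with every morphism, this identity may be raised to the $r$-th power, giving $\lambda^*(g_d)^r\cdot e_\prim=\lambda^*(\Fr_{A_d})^r\cdot e_\prim$; and $\lambda^*(\Fr_{A_d})^r=\lambda^*(\Fr_{A_d}^r)$ is the $q^r$-power Frobenius of $A_{d,K}$ because $A_{d,K}$ is cut out by equations over $\F_q\subseteq K$. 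For the Fermat variety the argument is identical, using Proposition \ref{twist} first to strip off the factor $\L\langle c\rangle^{\prod\chi_i}$ (on which the Frobenius acts by $\prod\chi_i(c^{\frac{q^r-1}{d}})$) so as to reduce to the base change of $F_d^{(n)}$.

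For $(\mathrm{B})$, the Fermat case is immediate: for $c\in K^*$ the variety $F_{d,K}^{(n)}\langle c\rangle$ is the twisted Fermat variety of the same shape over $K$ (the defining equations have coefficients in $\F_p\subseteq K$, and $\m_d\subseteq K^*$ since $d\mid q-1\mid q^r-1$), and $j_{d,K}^{(n)}\langle c\rangle$ is precisely its twisted Jacobi sum element over $K$, so $(\mathrm{B})$ for Fermat is Corollary \ref{cor:thm2} applied over the base field $K$. The Artin--Schreier case is the heart of the matter: Coleman's result over $K$ does not apply to $A_{d,K}$ directly, since the base change $A_{d,K}$ (still cut out by $x^q-x=y^d$) is not the standard Artin--Schreier curve over $K$. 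Instead I would introduce the morphism
\[
\pi\colon B\lra A_{d,K},\qquad \pi(x,y)=\bigl(\textstyle\sum_{j=0}^{r-1}x^{q^j},\ y\bigr),
\]
where $B$ is the standard Artin--Schreier curve $x^{q^r}-x=y^d$ over $K$; this is well defined because $\bigl(\sum_{j}x^{q^j}\bigr)^q-\sum_{j}x^{q^j}=x^{q^r}-x=y^d$. One checks that $\pi$ is finite, generically Galois of degree $q^{r-1}$ with Galois group $\Ker(\Tr_{K/\k})\times\{1\}\subseteq K\times\m_d$, and equivariant for the $K\times\m_d$-action on $B$ and the $K\times\m_d$-action on $A_{d,K}$ factoring through $(b,\mu)\mapsto(\Tr_{K/\k}(b),\mu)$. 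Hence, for $\psi\in\wh\k\setminus\{1\}$ and $\chi\in\wh\m_d\setminus\{1\}$, writing $\psi_K:=\psi\circ\Tr_{K/\k}\in\wh K\setminus\{1\}$, the character $(\psi_K,\chi)$ is trivial on the Galois group of $\pi$, so Proposition \ref{localization}(ii) gives an isomorphism $h(B)^{(\psi_K,\chi)}\simeq h(A_{d,K})^{(\psi,\chi)}$ compatible with the Frobenius endomorphisms. By Corollary \ref{g-j-fr} over $K$, $\Fr_B$ acts on $h(B)^{(\psi_K,\chi)}$ as $g_K(\psi_K,\chi)$, whence $\Fr_{A_{d,K}}$ acts on $h(A_{d,K})^{(\psi,\chi)}$ as $g_K(\psi_K,\chi)$; on the other hand $g_{d,K/\k}\in\Q[\k\times\m_d]$ acts on $h(A_{d,K})^{(\psi,\chi)}$ as $-\sum_{m\in K^*}\psi_K(m)\chi(m^{\frac{q^r-1}{d}})=g_K(\psi_K,\chi)$ by the definition \eqref{eq:def-Gsum} over $K$. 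Since $h(A_{d,K})^{(\psi,\chi)}$ is invertible, Lemma \ref{lem:scalar-ext} gives $g_{d,K/\k}\cdot e^{(\psi,\chi)}=\Fr_{A_{d,K}}\cdot e^{(\psi,\chi)}$; summing over all $\psi\ne1$, $\chi\ne1$ proves $(\mathrm{B})$.

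The main obstacle is precisely the Artin--Schreier case of $(\mathrm{B})$: the base change of the Artin--Schreier curve is the "wrong" curve over $K$, so Coleman's theorem does not transfer formally, and one needs the extra geometric input furnished by the trace-polynomial covering $\pi\colon B\to A_{d,K}$ (alternatively one could redo the divisor computation of Proposition \ref{coleman-fr}(i) directly on $A_{d,K}\times_K A_{d,K}$, but identifying $\pi$ is cleaner and reuses Proposition \ref{coleman-fr}). Some non-formal ingredient of this kind is unavoidable, since the eigenvalue identity underlying "$(\mathrm{A})=(\mathrm{B})$" is exactly the classical Davenport--Hasse relation \eqref{g-dh} (resp.\ \eqref{j-dh}), which is not a group-ring identity. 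The remaining work is routine bookkeeping, keeping straight the two Frobenii ($q$- versus $q^r$-power), the two base fields, and the ambient group rings $\Q[\k\times\m_d]$ and $\Q[\m_d^n]$.
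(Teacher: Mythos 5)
Your proposal is correct and follows essentially the same route as the paper: the decisive step in both is the trace-polynomial covering $A'\to A_{d,K}$ from the standard Artin--Schreier curve $u^{q^r}-u=v^d$ over $K$, combined with Corollary \ref{g-j-fr} applied over $K$, while the Fermat case and the identity $\lambda^*(g_d)^r\cdot e_\prim=\Fr_{A_{d,K}}\cdot e_\prim$ are handled exactly as you do. The only cosmetic difference is that the paper transfers the whole identity $g'\cdot e'_\prim=\Fr_{A'}\cdot e'_\prim$ at once via the pushforward $f_\#$ (using $f_\#(g')=g_{d,K/\k}$, $f_\#(e'_\prim)=e_\prim$, $f_\#(\Fr_{A'})=\Fr_{A_{d,K}}$), whereas you argue eigenspace by eigenspace via Proposition \ref{localization}(ii) and invertibility.
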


\begin{proof}
Let $A'$ be the Artin-Schreier curve over $K$ of degree $q^r$ with the affine equation $u^{q^r}-u =v^d$. 
Let $f \colon A' \to A_{d,K}$ be the morphism defined by $x=\sum_{i=0}^{r-1} u^{q^i}$, $y=v$. It is finite of degree $q^{r-1}$. 
Let 
\[g'=-\sum_{m \in K^*} (m, m^\frac{q^r-1}{d}), \quad e_\prim' \ \in \End_{\Chow(K,\Q)}(h(A'))\]
 be the Gauss sum element and the projector to the primitive part (with respect to the $K\times\m_d$-action), respectively. 
Since $f$ is compatible with the group actions via the homomorphism $\Tr_{K/\k} \times \mathrm{id} \colon K \times \m_d \to \k \times \m_d$, 
we have $f_\#(g')=g_{d,K/\k}$. 
Since $\Tr_{K/\k}$ is a surjective homomorphism, the map $\Q[K] \to \Q[\k]$ maps $e_K^1$ to $e_\k^1$, and we have 
$f_\#(e_\prim')=e_\prim$. 
We have also $f_\#(\Fr_{A'})=\Fr_{A_{d,K}}$. 
Since $g' \cdot e_\prim' = \Fr_{A'} \cdot e_\prim'$ by Corollary \ref{g-j-fr}, we obtain 
$g_{d,K/\k}\cdot e_\prim = \Fr_{A_{d,K}} \cdot e_\prim$. 
On the other hand, we have 
\[\lambda^*(g_{d})^r \cdot e_\prim=\lambda^*(g_d^r \cdot e_\prim)= \lambda^*(\Fr_{A_d}^r \cdot e_\prim)
=\Fr_{A_{d,K}} \cdot e_\prim\]
by Corollary \ref{g-j-fr}. 
Note that the action of $\Q[\k\times\m_d]$, in particular of $e_\prim$, is compatible with $\lambda^*$ and commute with the Frobenius. 
Hence the first assertion is proved. 
The second assertion follows similarly (and more easily) by Corollary \ref{cor:thm2}. 
\end{proof}

We can recover the classical formulas \eqref{g-dh}, \eqref{j-dh} from the theorem 
by multiplying the both sides  by $e^{(\psi,\chi)}$ (resp. by $e^{(\chi_1,\dots,\chi_n)}$), 
together with the fact that $h(A_{q-1,K})^{(\psi,\chi)}=\lambda^*(h(A_{q-1})^{(\psi,\chi)})$ (resp. $h(F_{q-1,K}^{(n)})^{(\chi_1,\dots, \chi_n)}=\lambda^*(h(F_d^{(n)})^{(\chi_1,\dots, \chi_n)})$) is invertible. 

\subsection{Multiplication formula}\label{ss7.2}

Recall the multiplication formula for the gamma function
\begin{equation}\label{gamma-mult}
\frac{\G(ns)}{\G(n)}=n^{n(s-1)} \prod_{i=0}^{n-1} \frac{\G(s+\frac{i}{n})}{\G(1+\frac{i}{n})}.
\end{equation}
Its finite analogue is due to Davenport-Hasse \cite{d-h}. 
Let $\k$ be as in the preceding subsection and suppose $n \mid d \mid q-1$. Then we have 
\begin{equation}\label{eq:mult-g}
g(\psi,\a^n)= \a^n(n)\prod_{\chi^n=1} \frac{g(\psi,\a\chi)}{g(\psi,\chi)}
\end{equation}
for any $\a \in \wh\m_d$. The statement is evident if $\a^n=1$, so we assume $\a^n\ne 1$. Then it is equivalent by \eqref{e1} to 
\begin{equation}\label{mult-j}
\a^n(n) j(\underbrace{\a,\dots, \a}_\text{$n$ times})=\prod_{\chi^n=1,\chi\ne 1} j(\a,\chi).
\end{equation}

Its first elementary proof which does not use Stickelberger's theorem on the prime decomposition of Gauss sums (see \eqref{stickelberger} below) or cohomology was given recently in \cite[Appendix A]{otsubo}. 
It applies a point counting argument based on the geometric construction of Terasoma \cite{terasoma} in his cohomological proof. 

We prove the following motivic analogue,
which includes Theorem \ref{thm1} (ii).

\begin{thm}
Let $\k$ be a field and assume that $d$ is a positive integer such that $\mu_d:= \{ m \in \k^* \mid m^d=1 \}$ has $d$ elements and $\Q(\zeta_{d}) \subset \L$.
For any $n \mid d$ and $\a \in \wh\m_d$ such that $\a^n\ne 1$, there is an isomorphism in $\Chow(\k,\L)$
\begin{equation}\label{mot-mult}
h(F_{d}^{(n)}\langle n\rangle)^{(\a,\dots,\a)} \simeq \bigotimes_{\chi\in\wh\m_d, \chi^n=1, \chi\ne 1} h(F_{d}^{(2)})^{(\a,\chi)}. 
\end{equation}
\end{thm}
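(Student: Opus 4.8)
Both sides of \eqref{mot-mult} are invertible objects of $\Chow(\k,\L)$: the left by Proposition \ref{f-mot} and the right by Proposition \ref{f-dual2}. Hence it will be enough to realize both as direct factors of the motive of a single variety and invoke the Krull--Schmidt principle (Proposition \ref{prop:invertible}). The plan is to mimic the proof of Theorem \ref{thm1} (i) carried out in \S\ref{sect:reduction}: construct a correspondence $\Gamma$ relating (an open part of) a product of $n-1$ twisted Fermat conics to $F_d^{(n)}\langle n\rangle$, equivariant for an action of a large finite abelian group $G\supset\m_d^n$, and compute its $(\a,\dots,\a)$-eigenmotive in two ways in $\DM(\k,\L)$ before descending to $\Chow(\k,\L)$ through \eqref{eq:Chow-DM}. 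This upgrades to Chow motives the geometric construction underlying Terasoma's cohomological proof \cite{terasoma} (and the point count of \cite[Appendix A]{otsubo}). Since the Artin--Schreier curves and the Frobenius play no role, the argument is characteristic-free and gives the statement over any $\k$ with $\m_d\subset\k$ and $\Q(\zeta_d)\subset\L$.

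The combinatorial backbone of $\Gamma$ is the identity $\prod_{i=1}^{n-1}(T-\zeta_n^i)=1+T+\dots+T^{n-1}$: it matches a point of $F_d^{(n)}\langle n\rangle$ with an $(n-1)$-tuple of points of the twisted Fermat conics $F_d^{(2)}\langle 1-\zeta_n^i\rangle$ (the nontrivial $n$-th roots of unity $\zeta_n^i$ indexing, as they must, the $n-1$ nontrivial characters $\chi$ with $\chi^n=1$), and the value $\prod_{i=1}^{n-1}(1-\zeta_n^i)=n$ of the polynomial at $T=1$ is the source of the twist $\langle n\rangle$ and of the classical factor $\a^n(n)$ in \eqref{mult-j}. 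One then runs through a chain of open immersions with ``good'' complements and finite generically Galois covers, applying Proposition \ref{localization} (i), (ii) to pass the eigenmotive along each step, and --- if exceptional divisors appear, as they do in the Katsura--Shioda structure of \S\ref{s-f-m} --- the blow-up formula (Proposition \ref{prop:blow-up}), whose centres have Fermat eigenmotives computable by Proposition \ref{f-mot}; the twists by the scalars $1-\zeta_n^i$ are then reassembled via Proposition \ref{twist} and Lemma \ref{lem:L-chi}. The outcome is an isomorphism between $h(F_d^{(n)}\langle n\rangle)^{(\a,\dots,\a)}$, possibly augmented by a sum of Tate and Artin motives, and $\bigotimes_{\chi^n=1,\chi\ne1}h(F_d^{(2)})^{(\a,\chi)}$ augmented by the same sum, whence \eqref{mot-mult} by Proposition \ref{prop:invertible}.

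The heart of the matter --- and the main obstacle --- is the geometric construction itself: pinning down $\Gamma$ and $G$ explicitly, checking smoothness of the relevant open loci by a Jacobian computation in the style of Lemma \ref{lem:AS-F2}, identifying the complements of the open immersions and verifying that their eigenmotives vanish in the character $(\a,\dots,\a)$, and bookkeeping the Artin twists so that precisely the constant $n$ emerges; if junk summands do appear, they must be shown to coincide on the two sides before Krull--Schmidt applies. Over a finite field one has a consistency check but not a proof: by Theorem \ref{thm2} and Corollary \ref{cor:thm2} the Frobenius acts on the two members of \eqref{mot-mult} through $j_d^{(n)}\langle n\rangle$ and through the elements $j_d^{(2)}$ (one for each $\chi$), and the equality of the corresponding eigenvalues is exactly the classical Davenport--Hasse relation \eqref{mult-j} --- but this does not yield an isomorphism of motives, which is why the geometric input is indispensable.
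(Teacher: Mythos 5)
Your high-level strategy is the same as the paper's: both sides are invertible (so Krull--Schmidt applies), the Frobenius argument cannot work and is not needed, and the isomorphism must come from an explicit geometric construction in the spirit of Terasoma's, processed through Proposition \ref{localization} and the functor \eqref{eq:Chow-DM}. You have even identified the correct combinatorial seed (the factorization $\prod_{i=1}^{n-1}(T-\zeta_n^i)=1+T+\cdots+T^{n-1}$ and its value $n$ at $T=1$ as the source of the twist $\langle n\rangle$). But the proposal is a plan, not a proof: you explicitly defer ``the heart of the matter --- the geometric construction itself,'' and that is precisely where all the content lies. As it stands there is no correspondence $\Gamma$, no group $G$, no verification of any of the eigenmotive computations, so nothing has been proved.

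Moreover, the shape of the construction you envisage does not quite match the geometry. You want a correspondence directly between $F_d^{(n)}\langle n\rangle$ and a product of twisted conics $F_d^{(2)}\langle 1-\zeta_n^i\rangle$ whose factors are \emph{pre-labelled} by the nontrivial $n$th roots of unity. What actually exists is symmetric in the factors: the paper introduces the hyperplane $S=\{\sum_i s_i=n,\ \prod_i s_i\ne 0\}$, the intermediate cyclic cover $T=\{t^d=s_1\cdots s_n\}$ of $S$ (a quotient of an open part of $F_d^{(n)}\langle n\rangle$ by the kernel of the product map $\mu_d^n\to\mu_d$), and the map $C^{n-1}\to T$, $s_i=\prod_{j=1}^{n-1}(1-\zeta^i y_j)$, $t=x_1\cdots x_{n-1}$, where $C\colon x^d+y^n=1$ is a quotient of an open part of the \emph{untwisted} $F_d^{(2)}$. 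The cover $C^{n-1}/S$ is generically Galois with group $\mu_d^{n-1}\rtimes S_{n-1}$, and the decisive step --- entirely absent from your outline --- is how one passes from the $S_{n-1}$-invariant summand $N=M^1$ of $M=M(C^{n-1})^{(\a,\dots,\a)}$ to the summand $L=M^{\bn}$ cut out by a tuple of \emph{distinct} nontrivial characters $\n_1,\dots,\n_{n-1}$ of $\mu_n$: since $\s e^{\bn}=e^{\s\bn}\s$ and $e^{\bn}e^{\s\bn}=0$ for $\s\ne 1$, one gets $e^{\bn}e^1e^{\bn}=\frac{1}{(n-1)!}e^{\bn}$, so the composite $L\to N\to L$ is an isomorphism, and invertibility of both $L$ and $N$ gives $L\simeq N$. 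This is what makes the tensor product over the $n-1$ distinct characters $\chi$ with $\chi^n=1$ appear; without it (or a substitute), your Krull--Schmidt endgame has nothing to compare. Your proposed bookkeeping of twists by the scalars $1-\zeta_n^i$ is likewise not what happens: no twisted conics occur, and the twist $\langle n\rangle$ enters once, through the defining equation $t_1^d+\cdots+t_n^d=n$ of the open part of $F_d^{(n)}\langle n\rangle$ covering $T$. So the gap is concrete: the construction you would need to ``pin down'' is genuinely different from the one you sketch, and the missing symmetric-group argument is not a routine verification but the key idea of the proof.
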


\begin{proof}
We may assume $\k$ is a finite field or $\k=\Q(\zeta_d)$.
(In particular, $\k$ is perfect.)
We use the following notations after \cite[Appendix A]{otsubo}: 
\begin{itemize}
\item $S \subset \A^n$ is a hyperplane defined by $s_1+\cdots+ s_n=n$, $s_1\cdots s_n \ne 0$. 
\item $T \to S$ is a covering defined by $t^{d}=s_1\cdots s_n$. It is Galois with the natural identification $\Gal(T/S) =\m_{d}$. 
\item $X \subset \A^n$ is a hypersurface defined by 
$t_1^{d}+\cdots + t_n^{d}=n$, $t_1\cdots t_n\ne 0$. 
There is a morphism 
\[X \to T; \quad s_i=t_i^{d}, \ t=t_1\cdots t_n.\] 
Then $X$ is Galois over $S$ with the natural identification $\Gal(X/S)=\m_{d}^n$, 
under which $\Gal(X/S)\to\Gal(T/S)$ sends $(\x_1,\dots, \x_n)$ to $\prod_{i=1}^n \x_i$. 
\item $C$ is an affine curve defined by $x^{d}+y^n=1$, $x \ne 0$. 
There is a morphism
\[C^{n-1} \to T; \quad s_i= \prod_{j=1}^{n-1} (1-\z^i y_j), \ t=x_1\cdots x_{n-1}, \]
where 
$(x_j, y_j)$ is the coordinate of the $j$th component of $C^{n-1}$, and
$\z\in\k$ is a fixed primitive $n$th root of unity. 
Then $C^{n-1}$ is generically Galois over $S$ with the natural identification 
$\Gal(C^{n-1}/S)=\m_{d}^{n-1} \rtimes S_{n-1}$, 
under which $\Gal(C^{n-1}/S)\to\Gal(T/S)$ sends $((\x_1,\dots, \x_{n-1}), \s)$ to $\prod_{i=1}^{n-1} \x_i$. 
\end{itemize}

First, $X$ is an open subscheme of $F_{d}^{(n)}\langle n\rangle$ and we have isomorphisms 
\[M(T)^\a \simeq M(X)^{(\a,\dots, \a)} \simeq  M(F_{d}^{(n)}\langle n\rangle)^{(\a,\dots,\a)}\] 
by Proposition \ref{localization}.  
For the latter, note that the diagonal in $\m_{d}^n$ acts trivially on $F_{d}^{(n)}\langle n\rangle \setminus X$ but $(\a,\dots, \a)$ is non-trivial on the diagonal since $\a^n\ne 1$. 
In particular, $M(T)^\a$ is invertible by Proposition \ref{f-mot} (iv). 

On the other hand, if $\b \in \wh{\m_{d}^{n-1} \rtimes S_{n-1}}$ denotes the pull-back of $\a$, then 
\[M(T)^\a \simeq M(C^{n-1})^\b\]
by Proposition \ref{localization} (ii). The restriction of $\b$ to the first  (resp. the second) component is $(\a,\dots, \a)$ (resp. $1$), 
and the corresponding projectors $e^{(\a,\dots \a)}$ and $e^1$ satisfy
\[e^\b=e^{(\a,\dots,\a)} e^1 = e^1 e^{(\a,\dots, \a)} \quad \text{in} \ \L[\m_{d}^{n-1} \rtimes S_{n-1}] . \]
Hence $e^1$ restricts to an idempotent endomorphism of $M:=M(C^{n-1})^{(\a,\dots, \a)}$. 
Since $C$ admits an action of $\m_{d} \times \m_n$, we can further decompose $M$ with respect to the $\m_n^{n-1}$-action. 
Let $\n_1,\dots, \n_{n-1} \in \wh\m_n\setminus\{1\}$ be distinct characters (such a choice is unique up to permutations), 
and $e^\bn$ be the corresponding projector. 
One easily verifies that $\s e^\bn=e^{\s\bn}\s$ for any $\s \in S_{n-1}$, where $S_{n-1}$ acts on $\wh\m_n^{n-1}$ as permutations. 
Hence 
\[e^\bn e^1 e^\bn = \frac{1}{(n-1)!} \sum_{\s} e^\bn e^{\s\bn} \s = \frac{1}{(n-1)!} e^\bn \quad \text{in}  \ \End(M). \]
Note that $e^\bn e^{\s\bn}=0$ unless $\s=1$ by the assumption on $\bn$. 
If we put $L=M^{\bn}$ and $N=M^{1} \simeq M(T)^\a$, then the composite $L \to N \to L$ of the natural morphisms (via $M$) is the multiplication by $1/(n-1)!$, hence is an isomorphism. Since $N$ is invertible, it follows that $L \simeq N$ once we show that $L$ is also invertible. 
We have by definition $L=\bigotimes_{i=1}^{n-1} M(C)^{(\a,\n_i)}$. 
If $\wt C \subset F_{d}^{(2)}$ denotes the open curve defined by $u_0u_1\ne 0$, $C$ is the quotient of $\wt C$ by $1 \times \m_{d/n} \subset \m_{d}^2$, and we have isomorphisms by Propositions \ref{localization} 
\[M(C)^{(\a,\n_i)} \simeq M(\wt C)^{(\a,\chi_i)}\simeq M(F_{d}^{(2)})^{(\a,\chi_i)},\]
where $\chi_i$ is the pull-back of $\n_i$ to $\m_{d}$. 
For the latter isomorphism, note that the diagonal in $\m_{d}^2$ (resp. $\m_{d} \times 1$) acts trivially on $\{u_0=0\}$ (resp. $\{u_1=0\}$) and $\a\chi_i$ (resp. $\a$) is non-trivial on the subgroup. 
Since $M(F_{d}^{(2)})^{(\a,\chi_i)}$ is invertible by Proposition \ref{f-mot}, $L$ is also invertible and we have 
$\bigotimes_{i=1}^{n-1} M(F_{d}^{(2)})^{(\a,\chi_i)} \simeq N \simeq M(F_{d}^{(n)}\langle n\rangle)^{(\a,\dots,\a)}$, 
which implies the desired isomorphism of Chow motives. 
\end{proof}

\begin{cor}\label{cor:DH2}
Suppose further that $\k$ is a finite field. Under the isomorphism \eqref{mot-mult}, the endomorphism $j_{d}^{(n)}\langle n\rangle$ on the left hand side corresponds to $\bigotimes_{\chi^n=1,\chi\ne 1} j_{d}^{(2)}$ on the right hand side. 
\end{cor}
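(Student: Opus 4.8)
The plan is to deduce this corollary from the previous theorem combined with Theorem~\ref{thm2} and the fact that the Frobenius endomorphism commutes with every morphism in $\Chow(\k,\Q)$. Since the isomorphism \eqref{mot-mult} was constructed (over a finite field) entirely out of morphisms induced by $\m_d$- and $\m_n$-equivariant maps of varieties together with the projectors $e^{\bn}$, $e^1$, all of these commute with the Frobenius; hence the isomorphism \eqref{mot-mult} intertwines the two Frobenius endomorphisms. So it suffices to identify the Frobenius endomorphism on each side with the asserted twisted Jacobi sum element.

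First I would treat the left-hand side. By Corollary~\ref{cor:thm2} (applied with the base point $c=n$), the endomorphism $j_d^{(n)}\langle n\rangle$ of $h(F_d^{(n)}\langle n\rangle)$ agrees with $\Fr_{F_d^{(n)}\langle n\rangle}$ after composing with $e_\prim$; in particular $j_d^{(n)}\langle n\rangle \cdot e^{(\a,\dots,\a)} = \Fr \cdot e^{(\a,\dots,\a)}$ on $h(F_d^{(n)}\langle n\rangle)^{(\a,\dots,\a)}$, since $(\a,\dots,\a)\in\mathfrak{X}_d^{(n)}$ when $\a^n\ne1$. Next, for the right-hand side, by Corollary~\ref{g-j-fr} (the $n=2$, $c=1$ case) we have $j_d^{(2)}\cdot e_\prim = \Fr_{F_d^{(2)}}\cdot e_\prim$, hence on each tensor factor $h(F_d^{(2)})^{(\a,\chi)}$ (again in $\mathfrak{X}_d^{(2)}$ since $\a,\chi,\a\chi$ are all nontrivial—$\a\chi$ nontrivial because $(\a\chi)^n=\a^n\ne1$) the Frobenius acts as $j_d^{(2)}$. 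The Frobenius of a tensor product of motives is the tensor product of the Frobenii, so $\Fr$ on $\bigotimes_{\chi^n=1,\chi\ne1} h(F_d^{(2)})^{(\a,\chi)}$ equals $\bigotimes_{\chi^n=1,\chi\ne1} j_d^{(2)}$.

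Putting these together, under \eqref{mot-mult} the endomorphism $j_d^{(n)}\langle n\rangle$ on the left (which equals the Frobenius there) is carried to the Frobenius on the right (which equals $\bigotimes_{\chi^n=1,\chi\ne1} j_d^{(2)}$), as claimed. I do not anticipate a genuine obstacle here; the only point requiring a little care is to make sure that every map entering the construction of \eqref{mot-mult} in the finite-field case is a morphism in $\Chow(\k,\Q)$ (equivalently, in $\DM(\k,\Q)$ via the full faithfulness of \eqref{eq:Chow-DM}), so that \cite[p.~80]{Kleiman2} applies and the isomorphism is Frobenius-equivariant. This is already implicit in the proof of the theorem, where each step is an instance of Proposition~\ref{localization} or an equivariance/projector manipulation; one should simply note that all such maps are defined over $\k$ and are thus compatible with $\Fr$.
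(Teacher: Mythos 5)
Your proposal is correct and follows essentially the same route as the paper, which simply deduces the corollary by comparing Frobenius endomorphisms on both sides via Theorem \ref{thm2}. The extra details you supply (checking that the relevant characters lie in $\mathfrak{X}_d^{(n)}$ and $\mathfrak{X}_d^{(2)}$, and that Frobenius is compatible with tensor products and with the construction of \eqref{mot-mult}) are accurate and just make the paper's one-line argument explicit.
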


\begin{proof}
This follows by comparing the Frobenius endomorphisms using Theorem \ref{thm2}. 
\end{proof}

%

\begin{rmk}\label{rem:deduce-formulas}
One can deduce \eqref{e0-1} and \eqref{e0-3}
from Coleman's theorem \eqref{eq:coleman}
and Theorems \ref{thm1}, \ref{thm2} as follows.
Since both sides of Theorem \ref{thm1} (i) are invertible,
their endomorphism rings are canonically isomorphic to $\Lambda$.
The Frobenius endomorphisms on both sides,
regarded as elements of $\Lambda$, yield the same element
because the Frobenius endomorphism commutes with any morphism
(see \eqref{eq:Fr-comm-any-f}).
We conclude \eqref{e0-1} by observing 
that the Frobenius endomorphisms agree
with the left and right hand sides of \eqref{e0-1} 
by \eqref{eq:coleman} and Theorem \ref{thm2}, respectively.

Similarly, one recovers 
\eqref{e0-3} (which is \eqref{mult-j}) from Corollary \ref{cor:DH2}
and the invertibility of the motives, noting 
\[j_{d}^{(n)}\langle n\rangle \cdot e^{(\a,\dots, \a)}= \a^n(n)j(\a,\dots,\a) \cdot e^{(\a,\dots, \a)}, 
\quad j_{d}^{(2)} \cdot e^{(\a,\chi)} = j(\a,\chi) \cdot e^{(\a,\chi)}.\] 
\end{rmk}

\begin{rmk}\label{rem-gamma-mult}
If $\k=\C$, the complex period of an invertible motive in $\Chow(\C,\Q(\zeta_d))$ is an element of $\C^*/\Q(\zeta_d)^*$, 
defined by the de Rham-Betti comparison isomorphism. 
Since the periods of Fermat motives are special values of the beta function, the 
 isomorphism \eqref{mot-mult} implies \eqref{gamma-mult} 
for any $s \in d^{-1}\Z$, up to $\Q(\zeta_d)^*$. 
\end{rmk}

\section{Weil numbers}\label{sect:weil}

In this section, we start with the cyclotomic field $F=\Q(\z_d)$ for an integer $d\ge 3$. 
We have an isomorphism 
\[(\Z/d\Z)^* \simeq G:=\Gal(F/\Q); \quad h \mapsto \s_h,\]
where $\s_h$ is defined by $\s_h(\z_d)=\z_d^h$. Note that $\s_{-1}$ agrees with the complex conjugation (for any embedding $F\hookrightarrow \C$). 
Let $\m(F)$ denote the group of all the roots of unity in $F$. 
Note that $|\m(F)|=d$ or $2d$ according to the parity of $d$. 
Let $v$ be a prime of $F$ over a rational prime $p\nmid d$, $\k$ be the residue field at $v$ and put $q=p^f=|\k|$. 
Let $D=\langle \s_p \rangle \subset G$ be the decomposition subgroup of $v$. 
Then
$G/D$ is bijective to the set of primes of $F$ over $p$ by $\s \mapsto \s v$. 
We have $|D|=f$ and $|G/D|=\vp(d)/f$, where $\vp$ denotes Euler's totient function.

Let $\chi_d \colon \k^* \to F^*$ be the $d$th power residue character modulo $v$, i.e. 
$\chi_d(x \bmod v) \equiv x^\frac{q-1}{d} \pmod{v}$ for any $x\in F^*$ such that $v(x)=0$.
For any $\ba=(a_1, \dots, a_n) \in (\Z/d\Z)^n$ ($n \ge 2$),  put
\[j_d(\ba) = j(\chi_d^{a_1}, \dots, \chi_d^{a_n}) \ \in F^*.\] 
Define 
\[A_d^{(n)}=\{\ba=(a_1,\dots, a_n)  \in (\Z/d\Z)^n \mid a_0, a_1,\dots, a_n \ne 0\}\] 
where $a_0:=-\sum_{i=1}^n a_i$. 
If $\ba \in A_d^{(n)}$, then 
$j_d(\ba)$ is a $q$-Weil number of weight $n-1$ by \eqref{e1-reflextion}.   
Let 
\[W=W_q(F) \subset F^*\] 
be the subgroup of $q$-Weil numbers and 
$J\subset W$ be the subgroup generated by $\{j_d(\ba)\mid \ba \in A_d^{(n)}, n \ge 2\}$. 
Since $\s_h j_d(\ba)= j_d(h\ba)$, $J$ is $G$-stable as well as $W$. 
In fact, $J$ is generated by $\{j_d(\ba)\mid \ba \in A_d^{(2)}\} \cup \{ (-1)^{\frac{q-1}{d}} \}$ 
by \eqref{eq:j-induction}.  

Define the group homomorphism 
\[\Phi \colon W \to \Z[G/D] \]
by $\Phi(\a) v = (\a)$ (the principal divisor of $\a$). 
(Note that we have $v'(\alpha)=0$ for any finite place $v' \nmid p$ and $\alpha \in W$.)
Then for $\a$ of weight $w$, we have 
\begin{equation}\label{eq:8-1}
\Phi(\a)+ \s_{-1} \Phi(\a)
=\Phi(\a \cdot\s_{-1}(\a))
=\Phi(q^w)
=wfT,
\end{equation}
where $T:=\sum_{\t \in G/D} \t$ is the trace element. 
Kronecker's theorem \cite{Kronecker} shows
\begin{equation}\label{e8.0}
\Ker\Phi = \m(F).
\end{equation}
In particular, $W$ is a finitely generated abelian group whose torsion part is precisely $\mu(F)$
(see \eqref{eq:rank} below for its rank).
Since $W$ and $J$ are $G$-stable and $\Phi$ is $G$-equivariant, $\Phi(W)$ and $\Phi(J)$ are ideals of $\Z[G/D]$. 

The case when $f$ is even is easy. 
\begin{ppn}\label{f even} If $f$ is even, then 
$\Phi(W)=\Phi(J)=\Z \cdot \frac{f}{2}T$, 
and $W$ is generated by $\m(F)$ and $\sqrt{q}=p^{f/2}$.  
\end{ppn}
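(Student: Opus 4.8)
The plan is to compute $\Phi(W)$ exactly and then read off both the description of $W$ and the value of $\Phi(J)$. First I would observe that, since $f$ is even, $\sqrt q = p^{f/2}$ lies in $F$, and it is a $q$-Weil number of weight one: it is a positive rational integer, so $|\sigma(\sqrt q)| = \sqrt q = q^{1/2}$ for every embedding $\sigma$. Because $p \nmid d$, the prime $p$ is unramified in $F$ and its prime divisors are exactly $\{\tau v \mid \tau \in G/D\}$; hence $\Phi(p) = T$ and $\Phi(\sqrt q) = \tfrac{f}{2}T$. This already yields $\Z\cdot\tfrac{f}{2}T \subseteq \Phi(W)$.

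For the reverse inclusion I would use \eqref{eq:8-1}. The one delicate point — and the place where the evenness of $f$ is genuinely used — is that complex conjugation $\sigma_{-1}$ acts as the identity on $\Z[G/D]$, equivalently that $v = \bar v$, i.e. $\sigma_{-1} \in D$. Granting this, \eqref{eq:8-1} reads $2\,\Phi(\alpha) = wfT$ for every $\alpha \in W$ of weight $w$; since $\Z[G/D]$ is torsion free and $wf/2 \in \Z$, this forces $\Phi(\alpha) = \tfrac{wf}{2}T \in \Z\cdot\tfrac{f}{2}T$. Therefore $\Phi(W) = \Z\cdot\tfrac{f}{2}T$.

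Next I would compute $\Phi(J)$. For $d \ge 3$ the set $A_d^{(2)}$ is nonempty (for instance $(1,1) \in A_d^{(2)}$, since then $a_0 = -2 \ne 0$), and for any $\ba \in A_d^{(2)}$ the element $j_d(\ba) \in J$ is a $q$-Weil number of weight one: each of its conjugates $j_d(h\ba)$ with $h \in (\Z/d\Z)^*$ is a Jacobi sum of three nontrivial characters, hence has absolute value $q^{1/2}$ by \eqref{e1-reflextion}. Applying the previous paragraph with $w = 1$ gives $\Phi(j_d(\ba)) = \tfrac{f}{2}T$, so $\Z\cdot\tfrac{f}{2}T \subseteq \Phi(J) \subseteq \Phi(W) = \Z\cdot\tfrac{f}{2}T$, whence $\Phi(J) = \Z\cdot\tfrac{f}{2}T$.

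Finally, for the structure of $W$: given $\alpha \in W$ of weight $w$ we have $\Phi(\alpha) = \tfrac{wf}{2}T = \Phi\big(p^{wf/2}\big) = \Phi\big((\sqrt q)^{w}\big)$, so $\alpha\,(\sqrt q)^{-w} \in \Ker\Phi = \m(F)$ by Kronecker's theorem \eqref{e8.0}; hence $\alpha \in \langle \m(F), \sqrt q \rangle$. Since conversely $\m(F) \subseteq W$ (roots of unity have weight $0$) and $\sqrt q \in W$, we conclude $W = \langle \m(F), \sqrt q \rangle$. The main obstacle in all of this is the single assertion that $\sigma_{-1} \in D$; granting it, everything else is formal once \eqref{eq:8-1} and \eqref{e8.0} are in hand.
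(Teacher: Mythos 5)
You follow the same route as the paper's proof: use \eqref{eq:8-1} to get $\Phi(\a)=\tfrac{wf}{2}T$ for every $\a\in W$ of weight $w$, exhibit a weight-one element of $J$ (e.g.\ $j_d(1,1)$, which is legitimate since $d\ge 3$ gives $a_0=-2\ne 0$), and finish with Kronecker's theorem \eqref{e8.0}; your extra verifications that $\Phi(p)=T$ and $\Phi(p^{f/2})=\tfrac f2T$ are correct. However, the step you call ``the one delicate point'' and then merely \emph{grant} --- that $f$ even implies $\s_{-1}\in D$ --- is the entire content of the proposition: without it the reverse inclusion $\Phi(W)\subseteq\Z\cdot\tfrac f2T$ fails and nothing else follows. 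Leaving it unproved is a genuine gap, not a detail.

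Moreover, that implication is false, so the gap cannot be closed under the stated hypothesis. Since $D=\langle\s_p\rangle$ is cyclic of order $f$, the condition $\s_{-1}\in D$ says $p^\nu\equiv-1\pmod d$ for some $\nu$; this forces $f$ to be even (and then $\s_{-1}=\s_p^{f/2}$), but the converse fails. Take $d=15$, $p=2$: the order of $2$ in $(\Z/15\Z)^*$ is $f=4$, yet $\langle 2\rangle=\{1,2,4,8\}$ does not contain $-1=14$, so $\s_{-1}\notin D$ (and the identity $\s_{-1}=(\s_p)^{f/2}$ asserted in the paper's own proof fails: $2^2=4\not\equiv-1$). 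In this case the statement itself breaks down: the coefficient of $\s_{h^{-1}}$ in $\theta_{15}(1,1)=2\theta_{15}(1)-\theta_{15}(2)$ is $1$ if $\{h/15\}<\tfrac12$ and $0$ otherwise, giving $\theta_{15}(1,1)=\s_1+\s_4+\s_8+\s_{13}$, whence $\Phi(j_{15}(1,1))=\pi(\theta_{15}(1,1))=3+\s_7$ in $\Z[G/D]$, which is not a multiple of $\tfrac f2T=2(1+\s_7)$. The hypothesis that actually makes your argument (and the paper's) work is $\s_{-1}\in D$, i.e.\ $p^\nu\equiv-1\pmod d$ for some $\nu$ (the Shioda--Katsura supersingularity condition); under that hypothesis every remaining step of your proposal is correct and complete.
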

\begin{proof}
Since $\s_{-1}=(\s_p)^{f/2} \in D$, we have $\Phi(\a)=\frac{wf}{2} T$ 
for any $\a\in W$ of weight $w$ by \eqref{eq:8-1}. 
Since we assumed $d \ge 3$, there exists a Jacobi sum of weight $1$ (e.g. $j_d(1,1)$), and the proposition follows. 
\end{proof}

For any $a\in\Z/d\Z$, define the Stickelberger element by 
\[\theta_d(a)= \sum_{h\in(\Z/d\Z)^*} \left\{-\frac{ha}{d}\right\} \s_h^{-1} \ \in\Q[G],\]
where $\{x\}$ denotes the fractional part, i.e. $x=\{x\}+\lfloor x \rfloor$. 
Note that 
\[\s_h \theta_d(a)=\theta_d(ha).\]
Define the trace element as $\wt T= \sum_{\s \in G} \s \in \Z[G]$. 
Then 
\begin{equation}\label{e8.1}
\theta_d(a)+\theta_d(-a) = \wt T \quad \text{if $a\ne 0$}.
\end{equation}
If $\pi\colon \Z[G] \to \Z[G/D]$ denotes the natural surjection, 
then $\pi(\wt T)= fT$. 
For $\ba=(a_1,\dots, a_n) \in A_d^{(n)}$, put 
\begin{align*}
\theta_d(\ba) 
&= \theta_d(a_1)+\cdots + \theta_d(a_n) - \theta_d(a_1+\cdots+a_n)
\\& =\theta_d(a_0)+\theta_d(a_1)+\cdots+\theta_d(a_n)-\wt T. 
\end{align*}
Then $\theta_d(\ba) \in \Z[G]$, and we have by \eqref{e8.1}
\begin{equation}
\theta_d(\ba)+\theta_d(-\ba) =(n-1)\wt T. 
\end{equation}

Stickelberger's theorem states that (see, e.g. \cite[Theorem 11.2.3]{berndtetal})
\begin{equation}\label{stickelberger}
(j_d(\ba)) = \theta_d(\ba) v \quad (\ba \in A_d^{(n)}).
\end{equation}
In other word, $\Phi(j_d(\ba))=\pi(\theta_d(\ba))$.  
Therefore, $\Phi(J)$ is generated as a $\Z$-module by 
$\{\theta_d(\ba) \mid  \ba \in A_d^{(2)} \}$.

Let $S \subset \Z[G]$ be the $\Z$-module generated by 
$\{\theta_d(\ba) \mid \ba \in A_d^{(2)} \}$.
Since $\s_h \theta_d(\ba)=\theta_d(h\ba)$, $S$ is an ideal of $\Z[G]$ 
and is called the \emph{Stickelberger ideal}. 
By \eqref{stickelberger}, we have $\pi(S)=\Phi(J)$.  
For any $G$-module $M$, write $M^\pm=\{m \in M \mid \s_{-1}m=\pm m\}$. 
By Iwasawa and Sinnott \cite{sinnott}, $S^-$ is of finite index in $\Z[G]^-$. 
More precisely, 
let $r$ be the number of prime factors of $d$, put $s=\max\{0, r-2\}$, 
and let $h_d^-$ be the minus part of the class number of $F=\Q(\z_d)$. 
Then 
\begin{equation}\label{iwasawa-sinnott}
m_d:=(\Z[G]^- : S^-) = 2^r h_d^-. 
\end{equation}

As a corollary, we obtain the following. 
\begin{ppn}\label{p8.2}
For any $\a \in W$, there exists $\z\in\m(F)$ such that $\z\a^{2m_d} \in J$. 
Moreover,  $J$ is of finite index in $W$  and we have
\begin{equation}
\label{eq:rank}
\rank J =
\rank W =
\begin{cases}
1+\frac{1}{2f}\vp(d) & \text{if $f$ is odd}, \\
1  & \text{if $f$ is even}.
\end{cases}
\end{equation}
\end{ppn}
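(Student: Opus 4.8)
The plan is to transport everything into the group ring $\Z[G/D]$ via $\Phi$ and reduce the statement to the Iwasawa--Sinnott bound \eqref{iwasawa-sinnott} on the minus part of the Stickelberger ideal. First I would dispose of the case $f$ even, where Proposition~\ref{f even} gives $\Phi(W)=\Phi(J)$: for any $\a\in W$ there is then $j\in J$ with $\Phi(j)=\Phi(\a)$, so $\a j^{-1}\in\Ker\Phi=\m(F)$ by \eqref{e8.0}; thus $W=\m(F)\cdot J$, which yields the divisibility claim (with exponent $1$, a fortiori with $2m_d$), the bound $[W:J]\le|\m(F)|<\infty$, and $\rank W=\rank J=\rank\bigl(\Z\cdot\tfrac f2 T\bigr)=1$. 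So from now on I assume $f$ is odd; then $\s_{-1}\notin D$, because $\s_{-1}\in D=\langle\s_p\rangle$ would mean $p^k\equiv-1\pmod d$ for some $k$, whence $f\mid 2k$ but $f\nmid k$ (as $-1\ne1$ in $\Z/d\Z$), forcing $f$ even.

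The heart of the argument is the inclusion $2m_d\,\Phi(W)\subseteq\Phi(J)$. Given $\a\in W$ of weight $w$, I would split $2\Phi(\a)$ into its $\s_{-1}$-eigencomponents and use \eqref{eq:8-1}:
\[
2\Phi(\a)=\bigl(\Phi(\a)+\s_{-1}\Phi(\a)\bigr)+\bigl(\Phi(\a)-\s_{-1}\Phi(\a)\bigr)=wfT+(1-\s_{-1})\Phi(\a),
\]
and then treat the two summands (after multiplying by $m_d$) separately. For $wm_dfT$: picking any $\ba\in A_d^{(2)}$ (nonempty since $d\ge3$), the $G$-stable ideal $S$ contains $\theta_d(\ba)+\theta_d(-\ba)=\wt T$ (by \eqref{e8.1} applied to the three components of $\ba$), so $fT=\pi(\wt T)\in\pi(S)=\Phi(J)$ by \eqref{stickelberger}, and $\Phi(J)$ being a subgroup, $wm_dfT\in\Phi(J)$. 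For $m_d(1-\s_{-1})\Phi(\a)$: since $(1-\s_{-1})\Z[G]\subseteq\Z[G]^-$ and \eqref{iwasawa-sinnott} makes $\Z[G]^-/S^-$ finite of order $m_d$, we get $m_d(1-\s_{-1})\Z[G]\subseteq m_d\Z[G]^-\subseteq S^-\subseteq S$; applying the surjection $\pi$ gives $m_d(1-\s_{-1})\Z[G/D]\subseteq\pi(S)=\Phi(J)$, in particular $m_d(1-\s_{-1})\Phi(\a)\in\Phi(J)$. Summing yields $2m_d\Phi(\a)\in\Phi(J)$.

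From here the first assertion is immediate: $\Phi(\a^{2m_d})=2m_d\Phi(\a)\in\Phi(J)$, so there is $j\in J$ with $\Phi(j)=\Phi(\a^{2m_d})$, and then $\z:=j\a^{-2m_d}\in\Ker\Phi=\m(F)$ satisfies $\z\a^{2m_d}=j\in J$. For the remaining claims I would note that $\Phi(W)$ and $\Phi(J)$ are subgroups of the free abelian group $\Z[G/D]$ of rank $[G:D]=\vp(d)/f$, hence finitely generated; since $2m_d\,\Phi(W)\subseteq\Phi(J)\subseteq\Phi(W)$, the quotient $\Phi(W)/\Phi(J)$ is finite, and $\Phi$ (with kernel $\m(F)$) identifies it with $W/\bigl(J\cdot\m(F)\bigr)$, so $[W:J]\le[W:J\cdot\m(F)]\cdot|\m(F)|<\infty$ and $\rank J=\rank W=\rank\Phi(W)$. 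To compute this last rank I would pass to $\Q[G/D]$ with the idempotents $e^{\pm}=\tfrac12(1\pm\s_{-1})$: by \eqref{eq:8-1}, $e^{+}\Phi(\a)=\tfrac{wf}{2}T$, so $e^{+}\bigl(\Phi(W)\otimes\Q\bigr)\subseteq\Q T$ has dimension $\le1$; on the other hand $\Phi(W)\supseteq m_d(1-\s_{-1})\Z[G/D]$, whose $\Q$-span is $e^{-}\Q[G/D]$, of dimension $\tfrac12[G:D]=\vp(d)/2f$ (since $\s_{-1}$ acts on $G/D$ without fixed points when $f$ is odd), and $fT\in\Phi(W)$ lies outside this span; together these force $\rank\Phi(W)=1+\vp(d)/2f$.

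The one genuinely substantive input is the Iwasawa--Sinnott index formula \eqref{iwasawa-sinnott} for $S^-$; apart from that, the work lies in correctly descending the Stickelberger ideal from $\Z[G]$ to $\Phi(J)=\pi(S)\subseteq\Z[G/D]$ and in juggling the $\s_{-1}$-eigenspaces. The point I would be most careful about is obtaining the exponent $2m_d$ rather than $4m_d$: this is exactly why one writes $2\Phi(\a)-wfT=(1-\s_{-1})\Phi(\a)$, which lies in $(1-\s_{-1})\Z[G/D]$ verbatim, instead of merely observing that it lies in the minus eigenspace of $\Z[G/D]$ (which would cost an extra factor of $2$).
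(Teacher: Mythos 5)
Your proposal is correct and follows essentially the same route as the paper: the even case via Proposition \ref{f even}, and for $f$ odd the decomposition $2\Phi(\a)=wfT+(\Phi(\a)-\s_{-1}\Phi(\a))$ combined with $fT\in\Phi(J)$ and the Iwasawa--Sinnott bound \eqref{iwasawa-sinnott} to get $m_d(1-\s_{-1})\Phi(\a)\in\Phi(J)$, then lifting through $\Ker\Phi=\m(F)$. Your use of $(1-\s_{-1})\Z[G/D]$ in place of the paper's surjection $\Z[G]^-\twoheadrightarrow\Z[G/D]^-$ is an equivalent formulation (these coincide since $\s_{-1}$ acts freely on $G/D$ when $f$ is odd), and your rank count matches the paper's appeal to $\rank\Z[G/D]^-=\vp(d)/(2f)$.
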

\begin{proof}
It is a consequence of Proposition \ref{f even} if $f$ is even. Suppose that $f$ is odd. 
Then we have $\sigma_{-1} \not\in D$ and hence
the right vertical map in the commutative diagram
\[\xymatrix{S^- \ar@{^(_->}[r] \ar[d] & \Z[G]^- \ar@{->>}[d]\\
\Phi(J)^- \ar@{^(_->}[r] & \Z[G/D]^- 
}\]
is surjective.
By \eqref{iwasawa-sinnott}, we have $(\Z[G/D]^-:\Phi(J)^-) \mid m_d$. 
On the other hand, we have $\Phi(J)^+=\Phi(W)^+=\Z \cdot fT$.
For any $\a\in W$ of weight $w$,  we have by \eqref{eq:8-1}
\[
2\Phi(\a)=(\Phi(\a)+\s_{-1}\Phi(\a))+(\Phi(\a)-\s_{-1}\Phi(\a))
=wf T + (\Phi(\a)-\s_{-1}\Phi(\a)). 
\]
Since $fT=\Phi(q) \in \Phi(J)^+$ and $m_d(\Phi(\a)-\s_{-1}\Phi(\a)) \in \Phi(J)^-$, 
the assertion follows by \eqref{e8.0}
and  $\rank \Z[G/D]^-=\varphi(d)/(2f)$.
\end{proof}

We have an obvious inequality $(W:J) \le |\mu(F)| \cdot 2^{\rank W} \cdot m_d$.
It might be an interesting problem to find a better upper bound for $(W:J)$.
To understand its motivic meaning, 
we consider the group homomorphism
\[\sFr \colon \Pic(\Chow(\k,F)) \to W\]
which associates to an invertible motive $M$ its Frobenius eigenvalue in $\L\simeq \End(M)$. 
As a result of Theorem \ref{thm2} and Proposition \ref{p8.2}, we obtain the following. 
\begin{cor} 
For any $\a \in W$, there exist Fermat motives $M_i=h(F_d^{(2)})^{\bchi_i}$ 
($i=1,\dots, s$, $\bchi_i \in \mathfrak{X}_d^{(2)}$), 
an Artin motive $h(\Spec K)^\chi$ for a character $\chi \colon \Gal(K/\k) \to \m(F)$ of a finite extension $K/\k$,
and an integer $r$, such that 
\[\a^{2m_d} = \sFr \left(\bigotimes_{i=1}^s M_i \ot h(\Spec K)^\chi(r)\right).\]
\end{cor}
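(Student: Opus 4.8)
The plan is to run Proposition~\ref{p8.2} through the Frobenius realization $\sFr$, using Theorem~\ref{thm2} to identify the Frobenius eigenvalues of the motives involved. First I would record how $\sFr$ behaves on the building blocks. If $L$ is invertible then $\End(L)=F$, so $\Fr_L$ is a scalar; since the Frobenius endomorphism is a tensor natural transformation, $\sFr(L_1\ot L_2)=\sFr(L_1)\sFr(L_2)$, and $\sFr(F(1))=q$. On a Fermat motive $h(F_d^{(2)})^{\bchi}$ with $\bchi=(\chi_d^{a_1},\chi_d^{a_2})\in\mathfrak{X}_d^{(2)}$, Theorem~\ref{thm2} (equivalently Corollary~\ref{g-j-fr}, extended from $\Q$ to $F$ by Lemma~\ref{lem:scalar-ext}; cf.\ Remark~\ref{r-inv}) shows that $\Fr$ acts as multiplication by $j(\chi_d^{a_1},\chi_d^{a_2})=j_d(\ba)$, $\ba=(a_1,a_2)$; note that $\bchi\in\mathfrak{X}_d^{(2)}$ precisely when $\ba\in A_d^{(2)}$. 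On an Artin motive $h(\Spec K)^{\chi}$ — invertible by Lemma~\ref{artin-motive}, as any finite $K/\k$ is Galois and $\m(F)\subset F$ — the computation carried out in the proof of Theorem~\ref{thm2} for the Kummer motives $\L\langle c\rangle^{\chi}$ applies verbatim and gives $\sFr(h(\Spec K)^{\chi})=\chi(\varphi_q)$, where $\varphi_q\in\Gal(K/\k)$ is the $q$-power Frobenius.

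Next I would handle inverses and roots of unity. Applying $\sFr$ to the reflection isomorphism~\eqref{j-reflextion-mot} (with $n=2$) recovers $j_d(\ba)j_d(-\ba)=q$ (cf.~\eqref{e1-reflextion}) and identifies the inverse of $h(F_d^{(2)})^{\bchi}$ in $\Pic(\Chow(\k,F))$ with $h(F_d^{(2)})^{\ol\bchi}(-1)$, whose Frobenius eigenvalue is $j_d(\ba)^{-1}$. Consequently, for $\ba\in A_d^{(2)}$ and $e\in\Z$, the Weil number $j_d(\ba)^{e}$ is the Frobenius eigenvalue of a tensor product of $|e|$ copies of $h(F_d^{(2)})^{\bchi}$ (if $e\ge 0$) or of $h(F_d^{(2)})^{\ol\bchi}$ (if $e<0$), twisted by $F(r)$ for a suitable $r\in\Z$, the Fermat factors carrying characters in $\mathfrak{X}_d^{(2)}$ in either case. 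To realize an arbitrary $\z\in\m(F)$, let $K/\k$ be the extension of degree $|\m(F)|$, so $\Gal(K/\k)=\langle\varphi_q\rangle$ is cyclic of order $|\m(F)|$; since $\m(F)$ is cyclic of the same order, there is a well-defined character $\chi\colon\Gal(K/\k)\to\m(F)$ with $\chi(\varphi_q)=\z$, and then $\sFr(h(\Spec K)^{\chi})=\z$.

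Finally I would assemble the statement. Given $\a\in W$, Proposition~\ref{p8.2} yields $\z\in\m(F)$ with $\z\a^{2m_d}\in J$. Since $J$ is generated by $\{j_d(\ba):\ba\in A_d^{(2)}\}\cup\{(-1)^{(q-1)/d}\}$ (by~\eqref{eq:j-induction}), we may write $\z\a^{2m_d}=\pm\prod_{l=1}^{t}j_d(\ba_l)^{e_l}$ with $\ba_l\in A_d^{(2)}$ and $e_l\in\Z$; absorbing the sign $\pm1$ into $\z$ (which stays a root of unity), we may assume $\z\a^{2m_d}=\prod_{l=1}^{t}j_d(\ba_l)^{e_l}$. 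Expanding each power $j_d(\ba_l)^{e_l}$ into $|e_l|$ tensor factors by the previous paragraph produces invertible motives $M_1,\dots,M_s$ ($s=\sum_l|e_l|$), each $M_i=h(F_d^{(2)})^{\bchi_i}$ with $\bchi_i\in\mathfrak{X}_d^{(2)}$, together with $r\in\Z$, such that $\prod_l j_d(\ba_l)^{e_l}=\sFr\bigl(\bigotimes_{i=1}^{s}M_i\ot F(r)\bigr)$. Choosing $K/\k$ of degree $|\m(F)|$ and $\chi\colon\Gal(K/\k)\to\m(F)$ with $\chi(\varphi_q)=\z^{-1}$, so that $\sFr(h(\Spec K)^{\chi})=\z^{-1}$, and multiplying, we obtain
\[
\a^{2m_d}=\z^{-1}\cdot\bigl(\z\a^{2m_d}\bigr)=\sFr\!\left(\bigotimes_{i=1}^{s}M_i\ot h(\Spec K)^{\chi}(r)\right),
\]
which is the desired identity.

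The substantive ingredients are already in place: the arithmetic input is Iwasawa--Sinnott's theorem on the Stickelberger ideal (used through Proposition~\ref{p8.2}) and the geometric input is the identification of Frobenius with Jacobi sums (Theorem~\ref{thm2}), so what remains is essentially bookkeeping. I expect the only points requiring genuine care to be the passage to negative exponents — where one must invoke the reflection isomorphism~\eqref{j-reflextion-mot} and keep track of the accumulated Tate twists — and the realization of an arbitrary element of $\m(F)$ as a Frobenius eigenvalue, which forces one to take the auxiliary extension $K/\k$ of degree $|\m(F)|$ and to read off the Frobenius eigenvalue of an Artin (Kummer) motive from the computation in the proof of Theorem~\ref{thm2}.
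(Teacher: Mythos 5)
Your proposal is correct and follows exactly the route the paper intends: the paper states the corollary as an immediate consequence of Proposition \ref{p8.2} and Theorem \ref{thm2}, and your write-up supplies precisely the bookkeeping left implicit — reading off Frobenius eigenvalues of Fermat, Artin and Tate factors, using \eqref{j-reflextion-mot} for negative exponents, and realizing the root of unity $\z$ (and the sign from the generator $(-1)^{(q-1)/d}$ of $J$) via a cyclic Artin motive. No gaps.
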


If we assume the conjectures of Beilinson and Tate as in the introduction, then $\sFr$ should be injective. 
It follows that any multiplicative relation among Weil numbers in the image of $\sFr$ should lift to a relation of 
invertible motives. In this paper, we have exhibited such lifts for basic relations in $J$ unconditionally. 

\begin{cor}\label{cor:conditional}
Assume that $\sFr$ is injective. 
Then 
$\Pic(\Chow(\k,F))_\Q$ is  isomorphic to $W_\Q$ and is generated 
as a $\Q$-vector space
by the classes of Fermat motives $h(F_d^{(2)})^{\bchi}$ ($\bchi \in \mathfrak{X}_d^{(2)}$). 
\end{cor}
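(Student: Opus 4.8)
The plan is to push everything through the Frobenius‑eigenvalue homomorphism $\sFr\colon\Pic(\Chow(\k,F))\to W$ defined above. Write $P:=\Pic(\Chow(\k,F))$ and let $P_{\mathrm{F}}\subseteq P$ be the subgroup generated by the classes of the Fermat motives $h(F_d^{(2)})^{\bchi}$ with $\bchi\in\mathfrak{X}_d^{(2)}$; these are invertible by Proposition \ref{f-dual2}, so $P_{\mathrm{F}}$ is well defined. The two substantive inputs will be Theorem \ref{thm2}, which computes $\sFr$ on Fermat motives, and Proposition \ref{p8.2} (which rests on the Iwasawa--Sinnott index formula), which controls how far the Jacobi‑sum subgroup sits inside $W$.

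First I would record the injectivity of $\sFr$. Under the conjectures of Beilinson and Tate this is precisely the statement recalled in the introduction: by \cite[Proposition 2.21]{Milne} the simple objects of $\Chow(\k,\ol\Q)$ are all invertible and are classified up to isomorphism by their Frobenius eigenvalue, and then the conservativity of scalar extension (Lemma \ref{lem:scalar-ext}) shows that $\sFr$ is injective on $P$. In particular $P$ is (isomorphic to) a subgroup of the finitely generated abelian group $W$, hence finitely generated, and $\sFr$ induces an \emph{injection} $\sFr_\Q\colon P_\Q\hookrightarrow W_\Q$.

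Next I would identify $\sFr(P_{\mathrm{F}})$. For $\bchi=(\chi_1,\chi_2)\in\mathfrak{X}_d^{(2)}$, the very definition of $j_d^{(2)}$ and $e^{(\chi_1,\chi_2)}$ gives $j_d^{(2)}\cdot e^{(\chi_1,\chi_2)}=j(\chi_1,\chi_2)\,e^{(\chi_1,\chi_2)}$, so Theorem \ref{thm2} (case $n=2$, $c=1$) says the Frobenius of the invertible object $h(F_d^{(2)})^{\bchi}$ is multiplication by the scalar $j(\chi_1,\chi_2)\in F$. Under the identification $\wh\m_d\simeq\Z/d\Z$ furnished by the $d$th power residue character $\chi_d$ and the resulting bijection $\mathfrak{X}_d^{(2)}\leftrightarrow A_d^{(2)}$, this reads $\sFr\bigl(h(F_d^{(2)})^{\bchi}\bigr)=j_d(\ba)$, whence $\sFr(P_{\mathrm{F}})=J':=\langle\, j_d(\ba)\mid\ba\in A_d^{(2)}\,\rangle$. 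Now $J'\subseteq J$ and $J/J'$ is finite: the group $J$ is generated by the $j_d(\ba)$ together with $(-1)^{\frac{q-1}{d}}$, as noted above, so $J/J'$ is cyclic and generated by an element of order dividing $2$. Combined with $[W:J]<\infty$ from Proposition \ref{p8.2}, this gives $[W:J']<\infty$, and therefore $\sFr_\Q(P_{\mathrm{F},\Q})=J'_\Q=W_\Q$.

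Finally, combining the last two paragraphs: $\sFr_\Q\colon P_\Q\to W_\Q$ is injective and its restriction to $P_{\mathrm{F},\Q}$ is already surjective, so $\sFr_\Q$ is an isomorphism and $P_{\mathrm{F},\Q}=P_\Q$, which is exactly the corollary. The one step that requires genuine care, rather than mere bookkeeping, is the injectivity of $\sFr$: turning the heuristic ``$\sFr$ should be injective'' into a proof amounts to checking that under Beilinson (so that rational and numerical equivalence coincide) together with Tate, the category $\Chow(\k,F)$ is semisimple Tannakian and that within it an invertible object is pinned down up to isomorphism by its Frobenius scalar. Granting that, the remainder is just the assembly above of Theorem \ref{thm2} and Proposition \ref{p8.2}.
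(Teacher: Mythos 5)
Your proposal is correct and follows essentially the same route as the paper: the paper deduces this corollary from the conditional injectivity of $\sFr$ (via \cite[Proposition 2.21]{Milne} and Lemma \ref{lem:scalar-ext}) together with the preceding unnumbered corollary, which is itself exactly your combination of Theorem \ref{thm2} (identifying $\sFr$ of the Fermat motives with Jacobi sums) and Proposition \ref{p8.2} (finiteness of $(W:J)$ via Iwasawa--Sinnott). Your direct comparison of $J'$ with $J$, using that $q=j_d(\ba)j_d(-\ba)\in J'$ and that the extra generator of $J$ is a root of unity, is a harmless repackaging of the same content.
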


Consider the normalization ($a \in \Z/d\Z$, $a \ne 0$)
\[\wt\theta_d(a)= \theta_d(a)-\frac{1}{2} \wt T \ \in \Q[G]. \] 
First, we have by \eqref{e8.1} 
\begin{equation}\label{e8.3}
\wt\theta_d(-a)=-\wt\theta_d(a), 
\end{equation}
i.e. $\wt\theta_d(a) \in \Q[G]^-$. 
Since $\Phi(J)^-_\Q=\Q[G]^-$ by \eqref{iwasawa-sinnott}, $\Q[G]^-$ is generated by $\{\wt\theta_d(a)\mid a\in\Z/d\Z, a \ne 0\}$. 
Secondly, for any positive divisor $n$ of $d$ and $a \in \Z/d\Z$ such that $na \ne 0$, we have  
\begin{equation}\label{e8.4}
\wt\theta_d(na)=\sum_{i=0}^{n-1} \wt\theta_d\left(a+\frac{d}{n}i\right). 
\end{equation}
(This is easy to prove and is also a consequence of \eqref{mult-j} and \eqref{stickelberger}.) 

\begin{ppn}\label{prop:basis}
The set
\[\left\{\wt\theta_d(a) \Bigm| a \in (\Z/d\Z)^*, \left\{\frac{a}{d}\right\}<\frac{1}{2}\right\}\] 
is a $\Q$-basis of $\Q[G]^-$. 
\end{ppn}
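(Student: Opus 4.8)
The plan is to prove that the $\vp(d)/2$ elements in question span $\Q[G]^{-}$; since $\dim_{\Q}\Q[G]^{-}=\vp(d)/2$ (because $\s_{-1}\neq1$), spanning is equivalent to the asserted basis property. That the indexing set indeed has $\vp(d)/2$ elements is immediate: the involution $a\mapsto d-a$ permutes the units in $(0,d)$ without fixed points, as $d/2$ is never prime to $d$, and each orbit contains exactly one $a$ with $\{a/d\}<1/2$.

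For the spanning I would begin from the observation recorded just before the proposition, that $\Q[G]^{-}$ is generated by all $\wt\theta_{d}(b)$ with $b\in\Z/d\Z\setminus\{0\}$. Using \eqref{e8.3} — and $\wt\theta_{d}(d/2)=0$ when $d$ is even — one discards every $b$ with $\{b/d\}\ge1/2$, leaving the $b$ with $0<b<d/2$. It then remains to express the $\wt\theta_{d}(b)$ with $e:=\gcd(b,d)>1$ through the unit indices: writing $b=eu$ and applying \eqref{e8.4} with $n=e$ and $a=u$ gives $\wt\theta_{d}(b)=\sum_{i=0}^{e-1}\wt\theta_{d}(u+(d/e)i)$, whose arguments are all $\equiv u$ modulo $d/e$. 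If $d$ is a prime power each such argument is again prime to $d$ and we are done at once; for general $d$ one would iterate \eqref{e8.4}, inducting on the number of prime factors of $\gcd(b,d)$.

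I expect the elimination of the non-unit indices to be the crux, and it is clarifying to see why through characters. Over $\ol\Q$ the space $\Q[G]^{-}$ breaks into lines $\ol\Q\,e^{\chi}$ indexed by the odd characters $\chi$ of $G\cong(\Z/d\Z)^{*}$, and since $\wt\theta_{d}(a)=\s_{a}\,\wt\theta_{d}(1)$ for a unit $a$, the element $\wt\theta_{d}(a)$ acts on $e^{\chi}$ by $\chi(a)\,\lambda_{\chi}$, where $\lambda_{\chi}$ — the scalar by which $\wt\theta_{d}(1)$ acts — equals, up to sign, the generalized Bernoulli number $B_{1,\ol\chi}$; moreover the matrix $(\chi(a))$, with $\chi$ ranging over the odd characters and $a$ over our transversal of $(\Z/d\Z)^{*}/\{\pm1\}$, is invertible, being the character table of that quotient up to an odd twist and a diagonal factor. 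Hence the sought span — equivalently, the linear independence — holds exactly when $\lambda_{\chi}\neq0$ for every odd $\chi$. For primitive $\chi$ this is the classical non-vanishing of $L(0,\chi)$; for imprimitive odd $\chi$, where $\lambda_{\chi}$ carries extra Euler factors $\prod_{p\mid d}(1-\chi^{*}(p))$, one must check these do not annihilate it. This is automatic for $d$ a prime power, and in the general case I would deduce it from the Iwasawa--Sinnott index formula \eqref{iwasawa-sinnott} for $S^{-}$, which pins down $[\Z[G]^{-}:S^{-}]$ and thereby the way the $\wt\theta_{d}(a)$ generate $\Q[G]^{-}$.
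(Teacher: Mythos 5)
Your reduction is the right one, and you have in fact put your finger on a step that cannot be completed. First, the elementary route: applying \eqref{e8.4} to a non-unit $b=eu$ with $n=e$ can be circular, because $b$ itself may reappear among the arguments $u+(d/e)i$. Take $d=21$, $b=7$, $e=7$, $u=1$: the arguments are $1,4,7,10,13,16,19$, so \eqref{e8.4} reads $\wt\theta_{21}(7)=\wt\theta_{21}(7)+\sum_{c\ne 7}\wt\theta_{21}(c)$ and yields no expression for $\wt\theta_{21}(7)$; after applying \eqref{e8.3} it collapses to
\[
\wt\theta_{21}(1)-\wt\theta_{21}(2)+\wt\theta_{21}(4)-\wt\theta_{21}(5)-\wt\theta_{21}(8)+\wt\theta_{21}(10)=0,
\]
a nontrivial $\Q$-linear relation among the six elements that the proposition asserts form a basis of the six-dimensional space $\Q[G]^-$ (one checks directly that all twelve coefficients of this combination vanish). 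Your induction on the number of prime factors of $\gcd(b,d)$ does not move forward here either, since the offending argument is $b$ itself.

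Your character-theoretic picture explains exactly why, and also why the final appeal to \eqref{iwasawa-sinnott} cannot close the gap. As you say, the unit-indexed elements are the $G$-translates of $\wt\theta_d(1)$, so they span $\Q[G]^-$ if and only if the $e^\chi$-component $\lambda_\chi=\pm B_{1,\ol\chi\bmod d}$ is nonzero for every odd $\chi$ modulo $d$, and for imprimitive $\chi$ this carries the Euler factors $\prod_{p\mid d,\ p\nmid f_\chi}(1-\chi^*(p))$. These factors \emph{do} vanish in general: for $d=21$ and $\chi$ the odd quadratic character of conductor $3$ one has $\chi^*(7)=\chi^*(1)=1$, whence $B_{1,\chi\bmod 21}=0$; the resulting dependence $\sum_a\chi(a)\wt\theta_{21}(a)=0$ is precisely the relation displayed above. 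The Iwasawa--Sinnott formula \eqref{iwasawa-sinnott} cannot rescue this, because $S$ is generated by the $\theta_d(\ba)$, which involve $\theta_d(b)$ for \emph{non-unit} $b$; it is exactly those elements that supply the $\chi$-eigencomponents missing from the $G$-orbit of $\wt\theta_d(1)$, so finiteness of $(\Z[G]^-:S^-)$ says nothing about the unit-indexed subfamily. The upshot is that the statement as printed fails for such $d$ (it does hold, e.g., for prime powers, where no Euler factors occur, and more generally whenever $\chi^*(p)\ne 1$ for every odd $\chi$ and every $p\mid d$ prime to $f_\chi$). The paper's own argument is a one-line appeal to the same induction on \eqref{e8.3} and \eqref{e8.4} and has the identical defect, so the obstruction you isolated is real and not an artifact of your write-up.
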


\begin{proof}
We already proved that $\Q[G]^-$ is generated by $\wt\theta_d(a)$
if $a$ ranges over $\Z/d\Z \setminus \{ 0 \}$.
One shows by induction using \eqref{e8.3} and \eqref{e8.4} that the set in the proposition already generates $\Q[G]^-$. 
It is a basis since  $\dim \Q[G]^-=\vp(d)/2$. 
\end{proof}

The last proposition shows that  the reflection formula \eqref{e8.3} and the multiplication formula \eqref{e8.4} generate all the $\Q$-linear relations among $\theta_d(\ba) \ (\ba \in A_d^{(n)}, n \ge 2 )$.
Hence the corresponding relations \eqref{e1-reflextion} and \eqref{mult-j} together with \eqref{eq:j-induction} generate all the relations among 
$j_d(\ba)$ up to torsion. 
Assuming the conjectures of Beilinson and Tate, it follows that 
\eqref{j-reflextion-mot} and Theorem \ref{thm1} (ii)  (under Proposition \ref{f-mot} (ii), (iii)) should generate all the relations among the Fermat motives up to torsion (i.e. up to powers and Artin motives). 

On the other hand, finding all the \emph{integral} relations appears to be subtler.
Indeed, it was first observed by Yamamoto \cite{Yamamoto} that
the relations among the Gauss sums are \emph{not} exhausted by
the reflection formula \eqref{gauss-reflection} and the multiplication formula \eqref{eq:mult-g},
disproving Hasse's conjecture.
Its counterpart for Jacobi sums can be found in \cite{MZ}.
We leave it as a future problem to study the corresponding relations among motives.




\begin{thebibliography}{AA}

\bibitem{Bass} 
H. Bass.
Algebraic $K$-theory.
W. A. Benjamin, Inc., New York-Amsterdam, 1968.

\bibitem{B} 
A. A. Beilinson.
Height pairing between algebraic cycles.
Lecture Notes in Math. {\bf 1289},  
Springer-Verlag, Berlin, 1987, 1--25.

\bibitem{DG} 
A. Beilinson and V. Vologodsky.
A DG Guide to Voevodsky's Motives.
Geometric and Functional Analysis {\bf 17}, 1709--1787 (2008).

\bibitem{berndtetal} 
B. C. Berndt, R. J. Evans and K. S. Williams. 
Gauss and Jacobi sums. 
Wiley-Interscience, New York, 1998.

\bibitem{coleman} 
R. F. Coleman. 
On the Frobenius endomorphisms of Fermat and Artin-Schreier curves. 
Proc. Amer. Math. Soc. {\bf 102}, no. 3 (1988), 463--466. 

\bibitem{d-h} 
H. Davenport and H. Hasse. 
Die Nullstellen der Kongruenzzetafunktionen in gewissen zyklischen F\"allen. 
J. Reine Angew. Math. {\bf  172} (1935), 151--182. 

\bibitem{deligne} 
P. Deligne. 
Cohomologie \'Etale (SGA $4\tfrac{1}{2}$). 
Lecture Notes in Math. {\bf 569}, Springer-Verlag, 1977. 

\bibitem{Kleiman}
S. L. Kleiman.
Algebraic cycles and the Weil conjectures.
Dix expos\'{e}s sur la cohomologie des sch\'{e}mas, 
Adv. Stud. Pure Math. {\bf 3}, 359--386. 
North-Holland, Amsterdam, 1968.

\bibitem{Kleiman2}
S. L. Kleiman.
Motives. 
Algebraic geometry, Oslo 1970 (Proc. Fifth Nordic Summer-School in Math.,
Oslo, 1970), 53--82. Wolters-Noordhoff, Groningen, 197.

\bibitem{Kronecker}
L. Kronecker.
Zwei S\"atze \"uber Gleichungen mit ganzzahligen Coefficienten.
J. Reine Angew. Math. {\bf  53} (1857), 173--175.

\bibitem{MVW}
C. Mazza, V. Voevodsky, W. Weibel.
Lecture notes on motivic cohomology.
Clay Math. Monogr. {\bf 2}.
Amer. Math. Soc., Providence, 2006.

\bibitem{Milne}
J. S. Milne.
Motives over finite fields. 
Motives, 
Proc. Symp. Pure Math., AMS, {\bf 55}, 1994, Part 1, 401--459.

\bibitem{murre}
J. P. Murre, J. Nagel and C. A. M. Peters. 
Lectures on the theory of pure motives. 
Amer. Math. Soc. 2013. 

\bibitem{MZ}
J. Muskat and Y.-C. Zee.
Sign ambiguities of Jacobi sums.
Duke Math. J. {\bf 40} (1973), 313--334.

\bibitem{otsubo1} 
N. Otsubo. 
On the regulator of Fermat motives and generalized hypergeometric functions. 
J. Reine Angew. Math. {\bf 660} (2011), 27--82.  

\bibitem{otsubo} 
N. Otsubo. 
Hypergeometric functions over finite fields. 
Ramanujan J., {\bf 63}, no.1 (2024), 55--104. 

\bibitem{Scholl} 
A. Scholl.
Classical motives.
Motives, 
Proc. Symp. Pure Math., AMS, {\bf 55}, 1994, Part 1, 163--187.

\bibitem{sinnott} W. Sinnott, On the Stickelberger ideal and the circular units of a cyclotomic field, Ann. Math. {\bf 108} (1978), 107--134. 

\bibitem{shioda-katsura} 
T. Shioda and T. Katsura. 
On Fermat varieties. 
T\^ohoku Math. J. {\bf 31} (1979), 97--115. 

\bibitem{Soule}
C. Soul\'e.
Groupes de Chow et $K$-th\'eorie de vari\'et\'es sur un corps fini. 
Math. Ann. {\bf 268}, 317--345 (1984).

\bibitem{terasoma} 
T. Terasoma. 
Multiplication formula for hypergeometric functions. 
Algebraic cycles and related topics (Kitasakado, 1994), 83--91, World Sci. Publ., River Edge, NJ, 1995.

\bibitem{VoTmot}
V. Voevodsky.
Triangulated categories of motives over a field.
Cycles, transfers, and motivic homology theories, 
Ann. of Math. Stud. {\bf 143}, 188--238. 
Princeton Univ. Press, Princeton, {NJ}, 2000.

\bibitem{weil} 
A. Weil. Numbers of solutions of equations in finite fields. 
Bull. Amer. Math. Soc., {\bf 55} (1949), 497--508. 

\bibitem{Yamamoto} 
K. Yamamoto.
On a conjecture of Hasse concerning multiplicative relations of Gaussian sums.
J. Combinatorial Theory {\bf 1} (1966), 476--489.

\end{thebibliography}
\end{document}